\documentclass[twoside]{amsart}
\usepackage{amsmath,amssymb,amscd,amsthm,amsxtra,amsfonts, pxfonts}
\usepackage{mathtools,mathrsfs,dsfont,xparse}
\usepackage{graphicx,epstopdf}
\usepackage{multirow}
\usepackage{cases}
\usepackage{enumerate}
\usepackage{xcolor}
\usepackage{tikz,pgfplots}
\usetikzlibrary{matrix}
\usepackage{mathtools}
\usepackage{todonotes}
\usepackage[margin=1.05 in]{geometry}

\mathtoolsset{showonlyrefs}

\allowdisplaybreaks[4]

\sloppy

\hfuzz  = 0.5cm

\date{}

\usepackage{multicol,bbm}
\usepackage[colorlinks=true, pdfstartview=FitV, linkcolor=blue, citecolor=blue, urlcolor=blue]{hyperref}

\pagestyle{myheadings}
\numberwithin{equation}{section}

\usepackage{cjhebrew}

\DeclareMathOperator{\im}{Im}

\usepackage[numbers,sort]{natbib}

{\theoremstyle {definition} \newtheorem {defn} {Definition} [section] }
{\theoremstyle {plain}  \newtheorem {theorem} [defn] {Theorem}}
{\theoremstyle {plain}  \newtheorem {corollary} [defn]{Corollary}}
{\theoremstyle {plain} \newtheorem {proposition} [defn]{Proposition}}
{\theoremstyle {plain} \newtheorem {lemma}[defn] {Lemma}}
{\theoremstyle {definition} \newtheorem {remark}[defn] {Remark}}
{\theoremstyle {plain} \newtheorem {claim}[defn] {Claim}}

\def\Z{{\mathbb{Z}}}
\def\T{{\mathbb{T}}}
\def\R{{\mathbb{R}}}
\def\C{{\mathbb{C}}}
\def\N{{\mathbb{N}}}

\newcommand{\abs}[1]{\left|#1\right|}
\newcommand{\norm}[1]{\left\|#1\right\|}
\newcommand{\inner}[1]{\left \langle#1\right \rangle}

\newcommand{\parenthese}[1]{\left(#1\right)}
\newcommand{\bracket}[1]{\left\{#1\right\}}

\begin{document}

\author[Wilson and Yu]{Bobby Wilson  and Xueying Yu}

\address{Bobby Wilson
\newline \indent Department of Mathematics, University of Washington\indent 
\newline \indent  C138 Padelford Hall Box 354350, Seattle, WA 98195,\indent }
\email{blwilson@uw.edu}

\address{Xueying  Yu
\newline \indent Department of Mathematics, University of Washington\indent 
\newline \indent  C138 Padelford Hall Box 354350, Seattle, WA 98195,\indent }
\email{xueyingy@uw.edu}

\title[Modified Scattering of Cubic Nonlinear Schr\"odinger Equation on Rescaled Waveguide Manifolds]{Modified Scattering of Cubic Nonlinear Schr\"odinger Equation on Rescaled Waveguide Manifolds}

\subjclass[2020]{35Q55}

\keywords{Modified Scattering, Waveguide, Irrational Torus, Norm Growth, Nonlinear Schr\"odinger Equations}

\begin{abstract}
We use modified scattering theory to demonstrate that small-data solutions to the cubic nonlinear Schr\"odinger equation on rescaled waveguide manifolds, $\mathbb{R} \times \mathbb{T}^d$ for $d\geq 2$,  demonstrate boundedness of Sobolev norms as well as weak instability.
\end{abstract}

\maketitle

\setcounter{tocdepth}{1}
\tableofcontents

\parindent = 10pt     
\parskip = 8pt

\section{Introduction}

In this paper, we consider the following cubic  nonlinear Schr\"odinger (NLS) equation posed on the ``irrational" rescaling of the  waveguide manifolds $\R \times \T_{\theta}^d$: 
\begin{align}\label{eq NLS}
(i\partial_t + \Delta_{\R \times \T_{\theta}^d}) \, U = \mu \abs{U}^2 U, \quad (x,y) \in \R \times \T_{\theta}^d ,
\end{align}
where $\mu = \pm 1$, $\T_{\theta}^d$ is an irrational torus, and $U$ is a complex-valued function on the spatial domain $(x, y) \in \R \times \T_{\theta}^d$. Here $\theta=(\theta_1, ..., \theta_d) \in \mathbb{R}_+^d$ and the $d$-dimensional torus re-scaled by the vector $\theta$ is defined by
    \begin{align*}
        \T_{\theta}^d:= \prod_{i=1}^d \tfrac{1}{\theta_i}\mathbb{T} ,
    \end{align*}
    where $\mathbb{T}:= \mathbb{R}/(2\pi \mathbb{Z})$. We will say that $\theta$ is ``irrational" if the equation
    \begin{align*}
        \sum_{i=1}^d n_i \theta_i^2 =0
    \end{align*}
    does not have any nontrivial solutions, $(n_1, ..., n_d) \in \mathbb{Z}^d$. More accurately, the squares of the components of $\theta$ are irrational.

The NLS \eqref{eq NLS} conserves the Hamiltonian defined as follows
\begin{align}
{H}(U(t))  = \int_{\mathbb{R}\times \mathbb{T}_{\theta}^d} \tfrac{1}{2} |\nabla U(t,x,y)|^2  +   \tfrac{\mu}{4}  |U(t,x,y)|^{4} \, dx dy .
\end{align}

For simplicity, the body of this paper is presented using the defocusing version ($\mu=1$) of the NLS system, \eqref{eq NLS}. 

\subsection{Motivation and Background}

The waveguide manifold $\R^m \times \T^n$ is a product of the Euclidean space and the tori, and of particular interest in nonlinear optics of telecommunications, see for example \cite{B08}. Due to the nature of such product spaces, we see NLS posed on the waveguide manifold mixed inheriting properties from those on classical Euclidean spaces and tori.

The goal of this work is to study the asymptotic behavior of \eqref{eq NLS}. In fact, this project is inspired by Hani, Pausader, Tzvetkov, and Visciglia \cite{HPTV15}, where the authors investigated the asymptotic behavior of the cubic NLS posted on ``rational" waveguide $\R \times \T^d$, where $\T^d$ is a rational torus, and showed modified scattering for solutions to such equation with small initial data. 

Before continuing on to our motivation, let us spend a moment to explain different long time behaviors that one would expect on Euclidean spaces and tori, such as scattering phenomenon, growth of high Sobolev norms and modified scattering. In general terms,  scattering is the behavior by which nonlinear solutions converge to linear solutions as time approaches  infinity. Such scattering effect is expected to hold on most noncompact Riemannian manifolds (see, for example, \cite{CKSTT08,B99, KM06, D12}), while on bounded domains, especially on the torus, one anticipates that the high Sobolev norms of solution will grow over time. Such growth is very much related to the phenomenon of weak turbulence which is  described as the  transference of  energy from low frequencies of a solution to high frequencies of a solution, causing high Sobolev norms to grow while the energy of the solution remains bounded. The first of such constructions   appears in Colliander, Keel, Staffilani, Takaoka, and Tao \cite{CKSTT10}.
As for the ``modified scattering", one attempts to relate the solution to the full equation to an effective or resonant system whose behavior may be much different than the behavior of the linear flow.  In fact, the effective system used in   \cite{HPTV15}  is shown to exhibit behavior much different than that of the linear system. A class of noncompact manifolds in which different elements exhibit both weak turbulence and scattering are the waveguide manifolds. For example, on $\mathbb{R}^d \times \mathbb{T}$, $d \geq 1$, Tzvetkov and Visciglia, \cite{TV16}, prove that the cubic NLS system exhibits scattering regardless of the size of the initial data. On the other hand, in \cite{HPTV15}, cubic NLS on $\mathbb{R} \times \mathbb{T}^d$, $1 \leq d \leq 4$, is shown not to exhibit scattering.

The second important source of motivation for this project is the study of the dynamics of cubic NLS defined on irrational tori: 
\begin{align}\label{eq TNLS}
    (i\partial_t + \Delta_{ \T_{\theta}^d}) \, u = \abs{u}^2 u, \quad x \in  \T_{\theta}^d.
\end{align}
In recent works \cite{SW20} and \cite{HPSW21}, the authors studied the growth of high Sobolev norms on irrational tori and compared the long time behavior of such norms with those in the rational torus setting. 
The works suggest that there exists stronger stability of the cubic NLS in the irrational case than what one expects in the square case. Particularly in light of the work on instability by the I-team for the square torus case, \cite{CKSTT10}. One could also consult the work of Deng, Germain, and Guth  \cite{DGG17} on improved long-time Strichartz estimates to gain the same intuition. However, the picture is not so simple. In the paper of Giuliani and Guardia, \cite{GG22}, they show that the type of quantitative instability results given by \cite{GK15} hold (to a lesser extent) in the case of most irrational rescalings of the torus.

Motivated by works \cite{SW20} and \cite{HPSW21} by the first author of this paper and collaborators, we would like to consider the analogue of \cite{HPTV15} in the ``irrational" waveguide setting .

\subsection{Main Result and Discussion}

Now let us  present the main result in this paper:
\begin{theorem}\label{thm.main}
    Let $d\geq 2$. There exists $ \Theta_2 \subset \Theta_1 \subset \mathbb{R}^d_+$  such that the following hold:
        \begin{enumerate}
            \item  If $\theta \in \Theta_1$,  one can define $N_{\theta}\geq 0$ such that for each $N\geq 10d+N_{\theta}$ there exists $\varepsilon(N, d)>0$ with the following property:  If $U_0 \in S^+$ and $\|U_0\|_{S^+}< \varepsilon(N, d)$, there exists a solution to \eqref{eq NLS}, $U(t)$, such that $U(0)=U_0$ and 
                \begin{align}
                    \sup_{s<N}\sup_t \|U(t)\|_{H^s} <\infty .
                \end{align}
               
            \item If $\theta \in \Theta_2$ and $C>0$, there exists a solution to \eqref{eq NLS}, $V(t)$, and $T_2>T_1>0$ such that
                \begin{align}
                    \|V(T_2)\|_{H^N}\geq C \|V(T_1)\|_{H^N} .
                \end{align}
                and $T_2-T_1 \leq \exp(c_N\exp(C^{\beta})/\varepsilon^2)$ where $\beta=\beta(N)$ and $N\geq 10d+N_{\theta}$.
            \item 
                \begin{align}
                    \mathcal{L}^d(\mathbb{R}^d_+ \setminus \Theta_2)&=0,\\
                    \dim_{\mathcal{H}}(\mathbb{R}^d_+ \setminus \Theta_1)&=0.
                \end{align}
            
        \end{enumerate}
\end{theorem}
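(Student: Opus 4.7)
The plan is to adapt the modified scattering framework of Hani--Pausader--Tzvetkov--Visciglia \cite{HPTV15} to the irrational waveguide setting, exploiting the Diophantine nature of $\theta \in \Theta_1$ to drastically simplify the resonant dynamics. As a starting point, I would work with the profile $F(t) := e^{-it \Delta_{\R \times \T_{\theta}^d}} U(t)$, Fourier-decompose along the compact factor to obtain a coupled system in $n \in \Z^d$ for the coefficients $F_n(t,x)$, and then, after the usual oscillatory-integral analysis on the $\R$ factor, isolate the slow dynamics. The nonresonant pieces decay, and the effective dynamics is driven by the set
\[
    \mathcal{R}_\theta = \bracket{(n_1,n_2,n_3,n_4) \in (\Z^d)^4 :\, n_1 - n_2 + n_3 - n_4 = 0,\ \sum_{i=1}^d \theta_i^2(n_{1,i}^2 - n_{2,i}^2 + n_{3,i}^2 - n_{4,i}^2) = 0}.
\]
The crucial observation is that for $\theta$ whose squares satisfy a Diophantine condition, the second equation forces $(n_1 - n_2, n_3 - n_4)$ to be short, which together with the momentum condition confines $\mathcal{R}_\theta$ to the \emph{trivial} quadruples $\{n_1 = n_2\} \cup \{n_1 = n_4\}$ plus finitely many exceptional ones bounded in size by the parameter $N_\theta$.

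For Part (1), I would: (a) introduce $S^+$ and $Z$-type norms mirroring the functional framework of \cite{HPTV15}; (b) run a bootstrap a priori estimate on $F$, in which the only novel ingredient is the resonant contribution; (c) observe that on $\Theta_1$ the resonant term acts, modulo the finitely many exceptional modes, as a gauge transformation combined with a Hamiltonian that preserves each $H^s$ norm, so the resonant flow has bounded Sobolev norms for every $s \leq N$. The requirement $N \geq 10d + N_\theta$ gives enough smoothness both for the dispersive estimates and to absorb the low-frequency exceptional resonances. Combined with $\|U_0\|_{S^+} < \varepsilon(N,d)$, this closes the bootstrap globally and yields $\sup_t \|U(t)\|_{H^s} < \infty$ for $s < N$.

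For Part (2), the strategy is to construct a modified-scattering solution whose resonant profile exhibits norm growth. Following Giuliani--Guardia \cite{GG22}, I would choose $\theta \in \Theta_2$ so that a specific chain of near-resonant quadruples survives, and then mimic the I-team toy-model cascade \cite{CKSTT10}: initialize on a small generation set, propagate mass along the chain of the effective system, and lift back to the full NLS via modified scattering. Since each transfer in the effective system operates on timescale $\sim \varepsilon^{-2}$ and the chain must be long enough to amplify the $H^N$ mass by a factor $C$, the total time comes out to the stated $\exp(c_N \exp(C^\beta)/\varepsilon^2)$. The set $\Theta_2$ is defined as the subset of $\Theta_1$ on which such a useful chain exists while the Diophantine lower bound of Part (1) still applies to the ``off-chain'' quadruples.

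Part (3) is then a standard Diophantine argument: writing $\Theta_1$ as the set of $\theta$ satisfying $|\sum_i n_i \theta_i^2| \geq c(\theta)\, \psi(|n|)$ for some $\psi$ decaying faster than every polynomial, a Jarn\'ik--Besicovitch type computation gives $\dim_{\mathcal{H}}(\mathbb{R}^d_+ \setminus \Theta_1) = 0$, and $\Theta_2$ is obtained by intersecting $\Theta_1$ with the complement of countably many lower-dimensional algebraic varieties that would obstruct the cascade, which leaves a full Lebesgue-measure set. The single hardest step, which I would tackle first, is engineering the cascade of Part (2) within the \emph{sparse} resonant set of an irrational waveguide: one must balance the instability requirement against the Diophantine lower bounds that make Part (1) go through, a tension that is essentially absent from the rational-torus analyses of \cite{HPTV15, CKSTT10}.
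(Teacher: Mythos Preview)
Your outline tracks the paper's overall architecture (profile $F$, decomposition into resonant/nonresonant, effective dynamics, Giuliani--Guardia for growth, Jarn\'ik--Besicovitch for Part (3)), but two of your key structural claims are wrong, and the second one hides the main technical difficulty of the irrational setting.

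First, your description of the resonant set $\mathcal{R}_\theta$ is incorrect. For irrational $\theta$ the exact resonance condition $\sum_i \theta_i^2(n_{1,i}^2 - n_{2,i}^2 + n_{3,i}^2 - n_{4,i}^2)=0$ forces each coordinate separately to satisfy $\{n_{1,i},n_{3,i}\}=\{n_{2,i},n_{4,i}\}$, but the choice can differ across $i$: one may have $n_{1,1}=n_{2,1}$ while $n_{1,2}=n_{4,2}$, etc. Thus $\Gamma_0$ is \emph{not} the trivial set $\{n_1=n_2\}\cup\{n_1=n_4\}$ plus finitely many exceptions; it is infinite and nontrivial. Your conclusion that the resonant flow preserves every $h^s$ norm is nonetheless correct, but the reason is a symmetry/cancellation argument (the paper's Proposition~\ref{nocascprop} and Claim~\ref{claim constant}), not triviality. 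Correspondingly, $N_\theta$ has nothing to do with a finite exceptional set: it encodes the Diophantine exponent $\gamma$ of $\theta$, i.e.\ $N_\theta\sim 10\gamma$.

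Second, and more seriously, ``the nonresonant pieces decay'' is exactly where the irrational case diverges from \cite{HPTV15}. In the rational case nonzero $\omega$ are integers, so integration by parts in $t$ gains a harmless $1/|\omega|$. Here $\omega$ accumulates at $0$, producing a genuine small-divisor problem: the normal-form term $\mathcal{E}^t_{3,M}$ carries an unavoidable factor of $M$ (see \eqref{eq E3M}). The paper's resolution is to use a \emph{quasi-resonant} effective system with a time-dependent threshold $M(t)\sim t^{\delta}$: the Diophantine condition pushes nonzero quasi-resonances with $|\omega|<1/M$ to frequencies $\gtrsim M^{1/(2\gamma)}$, where they are killed by the extra $N-d/2$ derivatives (this is the origin of the constraint $N\gtrsim d+\gamma$), while the exact resonances at low frequency are handled by the symmetry above. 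Your bootstrap sketch omits this mechanism entirely, and without it Part~(1) does not close.

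Finally, your description of $\Theta_2$ as the complement of algebraic varieties is off: $\Theta_2$ is defined by a Khintchine-type \emph{good approximation} condition on $(\theta_1^2,\theta_2^2)$ (Definition~\ref{def.Theta}), which is what allows the Giuliani--Guardia cascade to operate; the full-measure statement then comes from Khintchine's theorem, not from codimension counting.
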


Here $\dim_{\mathcal{H}}(E)$ represents the Hausdorff dimension of the set $E$. The form of this statement is inspired by the work of G\'{e}rard and Grellier \cite{GG10} in which it is shown that two seemingly incompatible types of dynamic behavior are exhibited in the cubic Szeg\H{o} equation: (1) complete integrability, and (2) the existence of arbitrarily large (in the $H^s$ sense) trajectories.

\begin{remark}
In the course of proving Theorem \ref{thm.main}, one may observe that the lower bound on the regularity, $N$, can be improved and better understood by the fact that for $\theta$ belonging to a full measure subset of $\Theta_1$, $N_{\theta}$ can be replaced with $d$. Moreover, on that same subset of $\Theta_1$, with a more careful analysis in Lemma \ref{lem 7.3}, $N$ can be set to be arbitrarily close to $\frac{5}{2}d$. We refer to Remark \ref{rmk.5/2}, for more discussion.
\end{remark}

Let us mention two modify scattering works that are related to this manuscript, \cite{GPT16, L19}. It is useful to compare the work presented in this manuscript to that which is contained in the paper of Grebert, Patural, and Thomann \cite{GPT16}. In  their manuscript, they consider a cubic NLS system on the waveguide manifold, $\mathbb{R}\times \mathbb{T}^d$ for $d=1,...,4$, where the Laplacian is perturbed by a convolution potential. The perturbations of the frequencies are of the form $\lambda_p = \|p\|^2+O(\|p\|^{-m})$ whereas the perturbations presented here are of the form $\lambda_p = \|p\|^2+ O(\|p\|^2)$. Both settings present their own difficulties, but the $O(\|p\|^{-m})$ condition leads a greater ability to control loss of regularity (see Assumption 1.1 from \cite{GPT16}). 

The last work on modified scattering in the waveguide setting that we would like to emphasize is the work of Liu, \cite{L19}, in which the author establishes a modified scattering property of solutions to cubic NLS on the waveguide, $\mathbb{R}\times \mathbb{T}^d$ for $d=1,...,4$, with respect to a different class of asymptotic trajectories than those used in \cite{HPTV15} and in this manuscript.

\subsection{Outline of the Proof of Theorem \ref{thm.main}}

The three parts of Theorem \ref{thm.main} are found throughout this paper as various theorems and remarks. Part (1) of the theorem follows directly from the statement of Theorem \ref{thm.asym} and Remark \ref{rmk.eta}. Part (2) follows from Theorem \ref{thm.asy}. Finally, Part (3) follows from the definition of $\Theta_1$, the definition of $\Theta_2$, and Remark \ref{rmk.Besi}.

The proof of Theorem \ref{thm.asym} makes the bulk of the complexity of the functional setting necessary. The main parts of the proof consists of a small-data wellposedness result (Proposition \ref{prop 6.2}) as well as an asymptotic stability estimate (Lemma \ref{asymptotic}). The strategy of the arguments that justify the main parts of Theorem \ref{thm.asym} are the same as what appears in \cite{HPTV15}. However, the proofs are made more complicated by two factors: (1) the appearance arbitrarily small ``small divisors" in proving Proposition \ref{prop 3.1} and (2) the use of a quasi-resonant effective equation as opposed to a fully resonant effective equation. The small divisor problem forces us to lose some control over the estimates of the nonresonant parts of the cubic nonlinearity (See estimate of $\mathcal{E}^t_{3, M}$ in Proposition \ref{prop 3.1}). This issue requires one to treat the proof of Proposition \ref{prop 6.2} much more delicately. The quasi-resonant effective equations are limited in their effectiveness over arbitrarily long time scales. This requires one to periodically change the effective equation as time approaches infinity. This also affects the way in which one approximates the solution to the full equation by solutions to the effective equation: An arbitrarily small level of additional regularity is necessary to characterize the behavior of solutions to the full equation using uniform approximations of the full equation. We note that this loss of regularity is not completely out of step with the previous work of \cite{HPTV15} and \cite{GPT16} in which additional regularity is needed. However the additional regularity in the previous works (which is also required in this work) is needed only for the $\mathbb{R}$-variable. Finally, the most fundamental difference in this work is the aspect of the dimension of the $d$-dimensional torus. The quasi-resonant structure of the effective system prevents a direct application of the Strichartz estimates used in \cite{HPTV15} (Lemma 7.1) and most importantly \cite{GPT16} (Lemma 3.2). This means that our base norm (the $Z_d$-norm, see Definition \ref{defn Z_d}) must consist of an $\ell^2$ Sobolev norm with more than $d/2$ derivatives. This is overcome by the symmetry that the quasi-resonant system displays at low modes.

The proof of Theorem \ref{thm.asy}, consists of properly reformulating the result of Giuliani and Guardia, \cite{GG22}, to construct a growing solution to a resonant waveguide system (Corollary \ref{GrowCor}), then using the stability estimate referenced in the previous paragraph (Lemma \ref{asymptotic}) to construct a solution to the full equation.

\subsection{Outline of the Rest of This Paper}

The organization of the rest of this paper is: in Section \ref{sec.preliminary}, we present the preliminaries and define the norms that will be used in later sections; in Section \ref{sec.decomp}, we decompose the nonlinearity in the gauge transformed NLS and study the properties of each component in this decomposition; in Section \ref{sec.dynamics} we discuss the dynamic of the quasi-resonant equations; in Section \ref{section.asymptotic} we prove the asymptotic behavior of \eqref{eq NLS}. Also in Appendix \ref{app.Proof3.1} and Appendix \ref{app.nocasc}, we include the proofs of two propositions used in the main theorem.

\subsection*{Acknowledgement}

Part of this work was done while the second author was in residence at the Institute for Computational and Experimental Research in Mathematics (ICERM) in Providence, RI, during the Hamiltonian Methods in Dispersive and Wave Evolution Equations program. The authors would like to thank Benoit Pausader whose mini-course on Semilinear Dispersive Equations at ICERM in 2021 inspired the direction of this project. B. W. is supported by NSF grant DMS 1856124.  X.Y. is partially supported by an AMS-Simons travel grant.

\section{Preliminaries}\label{sec.preliminary}

In this section, we present the harmonic tools, discuss resonance level sets, and  define the norms that will be used in the rest of this paper.

\subsection{Notations}
We use the usual notation that $A \lesssim  B$ or $B \gtrsim A$ to denote an estimate of the form $A \leq C B$, for some constant $0 < C < \infty$ depending only on the {\it a priori} fixed constants of the problem. We also use $a+$ and $a-$ to denote expressions of the form $a + \varepsilon$ and $a - \varepsilon$, for any $0 <  \varepsilon \ll 1$.

\subsection{Fourier Transforms and Littlewood–Paley Projections}
We will consider functions $f : \R \to \C$ and functions $F : \R \times \T_{\theta}^d \to \C$.  To distinguish between them, we use the convention that lower-case letters denote functions defined on $\R$, upper-case letters denote functions defined on $\R \times \T_{\theta}^d$, and calligraphic letters denote operators, except for the Littlewood–Paley operators and dyadic numbers, which are capitalized most of the time.

\subsubsection{Fourier Transforms}
We define the Fourier transform on $\R$ by
\begin{align}
\widehat{g} (\xi) : = \tfrac{1}{2\pi} \int_{\R} e^{-ix \cdot \xi } g(x) \, dx .
\end{align}
If $F(x,y)$ depends on $(x,y) \in \R \times \T_{\theta}^d$, $\widehat{F} (\xi ,y)$ denotes the partial Fourier transform in $x$. 
We also consider the Fourier transform of $f : \T_{\theta}^d \to \C$,
\begin{align}
f_p : = \frac{1}{|\T_{\theta}^d|} \int_{\T_{\theta}^d} f(y) e^{-i \inner{p,y}_{\theta}} \, dy   , \quad p \in \Z^d ,
\end{align}
and this extends to $F(x,y)$. Here $\inner{p,y}_{\theta}:= \sum_{i=1}^d \theta_ip_i y_i$. Finally, we also have the full spatial Fourier transform
\begin{align}
(\mathcal{F} F ) (\xi , p) := \frac{1}{|\T_{\theta}^d|} \int_{\T_{\theta}^d} \widehat{F} (\xi , y) e^{-i \inner{p,y}_{\theta}} \, dy   = \widehat{F}_p (\xi) .
\end{align}

\subsubsection{Littlewood–Paley Projections}
Let us know define Littlewood–Paley projections. For the full frequency space, these are defined as follows: 
\begin{align}
(\mathcal{F} P_{\leq N} ) F (\xi , p) =  \phi (\frac{\xi}{N}) \phi (\frac{p_1}{N}) \cdots \phi (\frac{p_d}{N}) (\mathcal{F} F) (\xi , p),
\end{align}
where $\phi \in C_c^{\infty} (\R)$, $\phi (x) =1 $ when $\abs{x} \leq 1 $ and $\phi (x) =0$ when $\abs{x} \geq 2$. We define
\begin{align}
P_N : = P_{\leq N} - P_{\leq \frac{N}{2}} , \quad P_{\geq N} : = 1 - P_{\leq \frac{N}{2}} . 
\end{align}

We also define Littlewood–Paley projections concentrating on the frequency in $x$ only,
\begin{align}
(\mathcal{F} Q_{\leq N} F) (\xi , p) = \phi (\frac{\xi}{N}) (\mathcal{F} F ) (\xi, p ) , 
\end{align}
with
\begin{align}
Q_N := Q_{\leq N} - Q_{\leq \frac{N}{2}} , \quad Q_{\geq N} := 1 - Q_{\leq \frac{N}{2}} ;
\end{align}
and Littlewood–Paley projections concentrating on the frequency in $y$ only,
\begin{align}
(\mathcal{F} L_{\leq N} F) (\xi , p) = \phi (\frac{p_1}{N}) \cdots \phi (\frac{p_d}{N}) (\mathcal{F} F ) (\xi, p ) , 
\end{align}
with
\begin{align}
L_N := L_{\leq N} - L_{\leq \frac{N}{2}} , \quad L_{\geq N} := 1 - L_{\leq \frac{N}{2}} . 
\end{align}

\subsection{Resonance Level Sets}

We start by establishing notation for the norms of the frequency modes. In the square case, the eigenvalue of $-\Delta_{\mathbb{T}^d}$ associated to the mode $p \in \mathbb{Z}^d$ is simply the Euclidean norm of $p$, which we will denote $\|p\|$. In the case that the torus is re-scaled by a vector, $\theta$, the eigenvalue of $-\Delta_{\mathbb{T}^d_{\theta}}$ associated to the mode $p \in \mathbb{Z}^d$ is a re-scaled version of the Euclidean. We suppress the dependence on $\theta$ and denote this eigenvalue by
    \begin{align*}
        \lambda_p:= \sum \theta_i^2p_i^2.    
    \end{align*}
    We define the following sets corresponding to momentum and resonance level sets:
\begin{align}\label{eq MW}
\begin{aligned}
\mathcal{M} & : = \{ (p,q,r,s) \in (\Z^{d})^4 : p-q + r-s =0\} ,\\
\Gamma_{\omega} & : = \{ (p,q,r,s) \in \mathcal{M} : \lambda_p - \lambda_q + \lambda_r - \lambda_s = \omega \} ,\\
\Omega&:= \{ \omega \in \mathbb{R} : \Gamma_{\omega} \neq \emptyset  \} .
\end{aligned}
\end{align}
Note that $\Omega$ is countable.

\subsection{Functional Setting}
Fix $\delta < 10^{-3}$. $N$ will be specified later. It will depend on $\theta$ and $d$. The requirements that will be given to $N$ will not be designed to be sharp (see the proof of Lemma \ref{lem 7.3} for detailed discussion on the sharpness of the requirement on $N$).

\begin{defn}[$h^k$-norm]
\begin{align}
\norm{\{ z_p\}}_{h_p^k}^2 := \sum_{p \in \Z^d} \left[\sum_{i=1}^d (1+|p_i|^2)^k \right] \abs{z_p}^2 .
\end{align}
\end{defn}

\begin{defn}[$k$-norm]\label{defn k}
Define an auxiliary norm:
	\begin{align*}
		|z|^2_k&:=  \int_{\xi\in \mathbb{R}}\sum_{p  \in \mathbb{Z}^d } \left[\sum_{i=1}^d (1+|p_i|^2)^k + (1+|\xi|^2)^k \right]|z_{\xi, p}|^2 .
	\end{align*}
\end{defn}

\begin{remark}
We note that since 
 	\begin{align*}
 		C_k^{-1} (1+|(\xi, p)|)^{2k} \leq \sum_{i=1}^d(1+|p_i|^2)^k  + (1+|\xi|^2)^k]\leq C_k  (1+|(\xi, p)|)^{2k},
	\end{align*}
	we can conclude that the norms are equivalent:
 	\begin{align*}
 		C^{-1}_k \|z\|^2_{H_{x, y}^k} \leq |z|^2_k \leq C_k \|z\|^2_{H_{x, y}^k} .
 	\end{align*}
     
\end{remark}    

\begin{remark}
We need a norm that plays the same role as the $Z$-norm in the paper of \cite{HPTV15} but controls a higher regularity (depending on the dimension) than the $h^1$ norm that the $Z$-norm controls. We note that this then requires and argument that allows one to bound the higher regularity norm in the same fashion that \cite{HPTV15} controls the $h^1$ norm. It is very important to point out that the $Z_d$-norm is defined to exploit the fact that  $h_p^{\frac{d}{2}+}$ is an algebra.
\end{remark}
\begin{defn}[$Z_d$-norm]\label{defn Z_d}
Define a new $Z_d$ norm
\begin{align}
\norm{F}_{Z_d}^2 : = \sup_{\xi \in \R} (1 + \abs{\xi}^2)^2 \norm{\widehat{F}_p (\xi)}_{h_p^{\frac{d}{2}+}}^2 .
\end{align}
\end{defn}

\begin{defn}[$S$- and $S^+$-norms]
\begin{align}
\norm{F}_{S} & : = \norm{F}_{H_{x,y}^N} + \norm{xF}_{L_x^2} ,\\
\norm{F}_{S^+} & : = \norm{F}_{S} + \norm{(1- \partial_{xx})^4 F}_{S} + \norm{xF}_{S}  .
\end{align}
\end{defn}
Then, since $N \geq 10 d$, we have
\begin{align}\label{eq 14}
\norm{F}_{H_{x,y}^1} \lesssim \norm{F}_{Z_d} \lesssim \norm{F}_{S} \lesssim \norm{F}_{S^+} .
\end{align}

\begin{defn}[$Z_{t,d}$-norm]\label{defn Z_td}
\begin{align}
\norm{F}_{Z_{t,d}} : =\norm{F}_{Z_d} + (1 + \abs{t})^{-\frac{1}{20}} \norm{F}_{S} .
\end{align}
\end{defn}

\begin{remark}
The following two space-time norms are necessary for the well-posedness and stability results of Section \ref{section.asymptotic}. We use the same framework as \cite{HPTV15} for the establishment of the results found in Section \ref{section.asymptotic}.
\end{remark}

\begin{defn}[$X_T$- and $X_T^+$-norms]\label{defn XT}
\begin{align}
\norm{F}_{X_T} & : = \sup_{0 \leq t \leq T} \{ \norm{F(t)}_{Z_d} + (1 + \abs{t})^{-\delta} \norm{F(t)}_{S} + (1+ \abs{t})^{1-3\delta}  \norm{\partial_t F (t)}_{S} \} ,\\
\norm{F}_{X_T^+} & : = \norm{F}_{X_T} + \sup_{0 \leq t \leq T} \{  (1 + \abs{t})^{-5\delta} \norm{F(t)}_{S^+} + (1+ \abs{t})^{1-7\delta}  \norm{\partial_t F (t)}_{S^+} \}  .
\end{align}
\end{defn}

\begin{defn}[$Y$-norm]
Define
\begin{align}
\norm{f}_{Y} : = \norm{\inner{x}^{\frac{9}{10}} f}_{L_x^2} + \norm{f}_{H_x^{\frac{3N}{4}}} .
\end{align}
Then
\begin{align}\label{defn Y}
\sum_{p \in \Z^d} \norm{F_p}_{Y} \lesssim \norm{F}_{S} .
\end{align}
\end{defn}

\begin{remark}

Furthermore, note that the following two basic inequalities hold:
    \begin{align*}
        \|f\|_{L^1(\mathbb{R})}  \lesssim \|f\|_{L^2(\mathbb{R})}^{\frac{1}{2}}\|xf\|_{L^2(\mathbb{R})}^{\frac{1}{2}} ,
    \end{align*}
and
    \begin{align*}
        \|f\|_{H^k(\mathbb{R} \times \mathbb{T}^d)} \lesssim \|f\|_{L^2(\mathbb{R} \times \mathbb{T}^d)}^{\frac{1}{2}}\|f\|_{H^{2k}(\mathbb{R} \times \mathbb{T}^d)}^{\frac{1}{2}} .
    \end{align*}
These two inequalities allow us to use a similar argument to that which appears in \cite{HPTV15} to conclude that 
    \begin{align*}
        \|F\|_{Z_d} \lesssim \|F\|_{L^2_{x, y}}^{\frac{1}{4}}\|F\|_{S}^{\frac{3}{4}} .
    \end{align*}
It is important to observe that this inequality requires that $N> d$ in order to hold.
\end{remark}

\begin{remark}
Let us present the last elementary inequality in this section, which will be used frequently when summing the frequencies in later sections. 
\begin{align}\label{eq Young}
\norm{\sum_{(q,r,s): (p,q,r,s) \in \mathcal{M}} c_q^{(1)} c_r^{(2)} c_s^{(3)}}_{\ell_p^2} \lesssim \min_{\sigma \in S_3} \norm{c^{\sigma(1)}}_{\ell_p^2} \norm{c^{\sigma(2)}}_{\ell_p^1} \norm{c^{\sigma(3)}}_{\ell_p^1} .
\end{align}
\end{remark}

We will close this section with the following definition of a cutoff function
\begin{defn}[A cutoff function]\label{defn cutoff}
For $T \gtrsim 1$ a positive number, we let $q_T : \R \to \R$ be an arbitrary function satisfying
\begin{align}
& 0 \leq q_T (s) \leq 1, \\
& q_T (s) =0 \quad \text{if } \abs{s}\leq \tfrac{T}{4} \text{ or } \abs{s} \geq T,\\
& \int_{\R} \abs{q_T' (s)} \, ds \leq 10 .
\end{align}
Particular examples are the characteristic functions $q_T (s) = \mathbf{1}_{[\frac{T}{2} ,T]} (s)$, with the natural interpolation of the integral on $\R$ of $\abs{q_T'}$. 
\end{defn}

\section{Analysis of the Nonlinearity}\label{sec.decomp}

In this section, we provide a decomposition of the nonlinearity in the gauge transformed equation \eqref{eq newNLS}, and prove  its decay properties in Proposition \ref{prop 3.1}.

\subsection{Duhamel's Formula and Gauge Transformation}

Define the standard gauge transformation by 
\begin{align}\label{eq U}
U(t,x,y) =  e^{it \Delta_{\R \times \T_{\theta}^d}} F(t) = \sum_{p \in \Z^d} e^{i \inner{p,y}_{\theta}} e^{-it \lambda_p} (e^{it\partial_{xx}} F_p (t)) (x) . 
\end{align}
 Then we see that  $U$ solves \eqref{eq NLS} if and only if $F$ solves
\begin{align}\label{eq newNLS}
i \partial_t F (t) = e^{-it \Delta_{\R \times \T_{\theta}^d}} ( e^{it \Delta_{\R \times \T_{\theta}^d}}  F(t) \cdot  e^{-it \Delta_{\R \times \T_{\theta}^d}} \overline{F(t)} \cdot  e^{it \Delta_{\R \times \T_{\theta}^d}} F(t)) = : \mathcal{N}^t [F(t) , F(t) , F(t)]. 
\end{align}
Denote the nonlinearity in \eqref{eq newNLS} by $\mathcal{N}^t [F(t) , F(t) , F(t)]$, where  the trilinear form is defined as follows
\begin{align}
\mathcal{N}^t [F,G,H] : = e^{-it \Delta_{\R \times \T_{\theta}^d}} (e^{it \Delta_{\R \times \T_{\theta}^d}} F \cdot e^{-it \Delta_{\R \times \T_{\theta}^d}}  \overline{G} \cdot e^{it \Delta_{\R \times \T_{\theta}^d}} H)
. 
\end{align}

\begin{remark}[Properties of $\mathcal{N}^t$] 
We list some properties of $\mathcal{N}^t$ that will be needed in later sections.
\begin{enumerate}
\item {\bf Fourier transforms of  $\mathcal{N}^t$:}

\noindent The Fourier transform of $\mathcal{N}^t$ is given by
\begin{align}\label{eq N}
\mathcal{FN}^t [F, G, H] (\xi, p) & = \sum_{(p,q,r,s) \in \mathcal{M}} e^{it (\lambda_p - \lambda_q + \lambda_r - \lambda_s)} (\mathcal{I}^t [F_q, G_r , H_s])^{\wedge} (\xi) \\
& = \sum_{\omega \in \Omega}\sum_{(p,q,r,s) \in \mathcal{M}} e^{it \omega} (\mathcal{I}^t [F_q, G_r , H_s])^{\wedge} (\xi)
\end{align}
where
\begin{align}
\mathcal{I}^t [f,g,h] & :  = e^{-it \partial_{xx}} (e^{it \partial_{xx}} f \cdot e^{-it \partial_{xx}} \overline{g} \cdot e^{it \partial_{xx}}  h) .
\end{align}

\item  {\bf Re-writing $\mathcal{N}^t$:}

\noindent Let $\mathcal{U} (t) =e^{it \partial_{xx}} $. We can write
\begin{align}
\mathcal{I}^t [f,g,h] & : = \mathcal{U} (-t) (\mathcal{U} (t) f  \cdot  \overline{\mathcal{U } (t)g} \cdot  \mathcal{U} (t) h) , 
\end{align}
also the  Fourier transform of $\mathcal{I}^t$  is given by
\begin{align}\label{eq It}
(\mathcal{I}^t [f,g, h]  )^{\wedge} (\xi)= \int_{\R^2} e^{2 it  \eta \kappa} \widehat{f} (\xi - \eta) \overline{\widehat{g}} (\xi - \eta - \kappa) \widehat{h} (\xi - \kappa) \, d\kappa d \eta .
\end{align}
Using \eqref{eq It}, we can also write 
\begin{align}
\mathcal{FN}^t [F, G, H] (\xi, p) = \sum_{\omega \in \Omega}\sum_{(p,q,r,s) \in \mathcal{M}} e^{it \omega}   \int_{\R^2} e^{it 2 \eta \kappa} \widehat{F_q} (\xi - \eta) \overline{\widehat{G_r}} (\xi - \eta - \kappa) \widehat{H_s} (\xi - \kappa) \, d\kappa d \eta .
\end{align}

\item {\bf Leibniz rules for $\mathcal{I}^t$ and $\mathcal{N}^t$: }

\noindent A Leibniz rule for $\mathcal{I}^t [ f, g, h]$
\begin{align}
Z \mathcal{I}^t [f,g,h] =   \mathcal{I}^t [Zf,g,h] + \mathcal{I}^t [f,Zg,h] + \mathcal{I}^t [f,g,Zh], \quad Z \in \{ ix, \partial_x \} . 
\end{align}
A similar property holds for the whole nonlinearity $\mathcal{N}^ t[F,G, H]$, where $Z$ can also be a derivative in the transverse direction $Z = \partial_{y_j}$.
\end{enumerate}
\end{remark}

\subsection{Decomposition of the Nonlinearity}
We decompose the nonlinearity in equation \eqref{eq newNLS} in the following way:
\begin{align}\label{eq eff+E}
 i \partial_t F = \mathcal{N}^t (F) = \mathcal{N}_{eff} (F) + \mathcal{E}^t (F) ,
\end{align}
where $ \mathcal{E}^t (F) $ is integrable in time. One hopes to prove that the asymptotic dynamics converge to that of the effective system
\begin{align}
i\partial_t G = \mathcal{N}_{eff} (G) .
\end{align}

Let $M\geq 0$. We begin by decomposing $\mathcal{N}^t$ in the following way with respect to $M$:
\begin{align*}
\mathcal{N }^t [F, G,H] & =  \Pi_M^t [F,G,H] + \widetilde{\mathcal{N}}_M^t [F,G,H].
\end{align*}
Here we define $\Pi_M^t$ and $\widetilde{\mathcal{N}}_M^t$ by 
\begin{align}\label{eq Pi}
\begin{aligned}
    \mathcal{F} \Pi_M^t [F, G,H] (\xi , p)&  : = \sum_{\abs{\omega} < \frac{1}{M}} \sum_{(p,q,r,s) \in \Gamma_{\omega}}  e^{it \omega} (\mathcal{I}^t [F_q, G_r , H_s] )^{\wedge}(\xi) \\
& = \sum_{\abs{\omega} < \frac{1}{M}} \sum_{(p,q,r,s) \in \Gamma_{\omega}}  e^{it \omega} \int_{\R^2} e^{it 2 \eta \kappa} \widehat{F_q} (\xi - \eta) \overline{\widehat{G_r}}  (\xi - \eta - \kappa) \widehat{H_s} (\xi - \kappa) \, d\kappa d \eta \\
\mathcal{F} \widetilde{\mathcal{N}}_M^t [F, G,H] (\xi , p) & : = \sum_{\abs{\omega} \geq \frac{1}{M}} \sum_{(p,q,r,s) \in \Gamma_{\omega}}  e^{it \omega} (\mathcal{I}^t [F_q, G_r , H_s] )^{\wedge}(\xi) .
\end{aligned}
\end{align}
Here $\Pi^t_0:= \mathcal{N}^t$ and $\widetilde{\mathcal{N}}^t_0=0$. 
Our goal is to show that as time approaches infinity, $\Pi_M^t$ ``resembles" $\frac{\pi}{t} \mathcal{R}^t_M$ defined by
    \begin{align}
        \mathcal{F} \mathcal{R}^t_M  [F,G,H] (\xi, p) & : = \sum_{\abs{\omega} < \frac{1}{M}} e^{it \omega}\sum_{(p,q,r,s) \in \Gamma_{\omega}} \widehat{F_q} (\xi) \overline{\widehat{G_r}} (\xi) \widehat{H_s} (\xi) \\
         \mathcal{F} \mathcal{R}_0^t  [F,G,H] (\xi, p) & : = \sum_{\omega \in \Omega} e^{it \omega}\sum_{(p,q,r,s) \in \Gamma_{\omega}} \widehat{F_q} (\xi) \overline{\widehat{G_r}} (\xi) \widehat{H_s} (\xi) . 
    \end{align}
Specifically, we want to show that (see the precise estimates in Lemma \ref{lem 3.7})
\begin{align}
\Pi_M^t = \frac{\pi}{t} \mathcal{R}_M^t  + O (\abs{t}^{-1 - 2 \delta}) .
\end{align}
The effective system which one would hope to prove that the asymptotic dynamics converge to is given by
\begin{align}\label{eq Resonance}
i \partial_{t} G(t) =  \mathcal{R}^{t}_M [G(t), G(t) , G(t)]. 
\end{align}

In order to accomplish this, one then needs to show that the affects of the the nonlinearity, $\mathcal{\widetilde{N}}_M^t$, on the dynamics diminish over time.  To deal with $\widetilde{\mathcal{N}}_M^t $, we further decompose $\widetilde{\mathcal{N}}_M^t$ along the nonresonant level sets
\begin{align}
\mathcal{F} \mathcal{\widetilde{N}}_M^t [F, G, H] (\xi, p) & = \sum_{\abs{\omega} \geq \frac{1}{M}} \sum_{(p,q,r,s) \in \Gamma_{\omega}} e^{it \omega} (\mathcal{O}_1^t [F_q, G_r, H_s] (\xi) + \mathcal{O}_2^t [F_q, G_r, H_s] (\xi) ) , 
\end{align}
where
\begin{align}\label{eq O}
\begin{aligned}
\mathcal{O}_1^t [f,g,h] (\xi) & : = \int_{\R^2} e^{2it\eta \kappa} (1 -\phi (t^{\frac{1}{4}} \eta \kappa)) \widehat{f} (\xi -\eta) \overline{\widehat{g}} (\xi - \eta - \kappa) \widehat{h} (\xi - \kappa ) \, d \eta d \kappa ,\\
\mathcal{O}_2^t [f,g,h] (\xi) & : = \int_{\R^2} e^{2it\eta \kappa}  \phi (t^{\frac{1}{4}} \eta \kappa) \widehat{f} (\xi -\eta) \overline{\widehat{g}} (\xi - \eta - \kappa) \widehat{h} (\xi - \kappa ) \, d \eta d \kappa , \\
(\mathcal{I}^t [f,g,h])^{\wedge} (\xi) & = \mathcal{O}_1^t [f,g,h] (\xi) +  \mathcal{O}_2^t [f,g,h] (\xi) .
\end{aligned}
\end{align}
We further define $\mathcal{E}_{1, M}^t$ and $\mathcal{E}_{2, M}^t$:
\begin{align}
\mathcal{F} \mathcal{E}_{1, M}^t [F, G, H] (\xi, p) = \sum_{\abs{\omega} \geq \frac{1}{M}} \sum_{(p,q,r,s) \in \Gamma_{\omega}} e^{it \omega} \mathcal{O}_1^t [F_q, G_r, H_s] (\xi)  \\
\mathcal{F} \mathcal{E}_{2, M}^t [F, G, H] (\xi, p) = \sum_{\abs{\omega} \geq \frac{1}{M}} \sum_{(p,q,r,s) \in \Gamma_{\omega}} e^{it \omega} \mathcal{O}_2^t [F_q, G_r, H_s] (\xi) . 
\end{align}
Note that 
\begin{align}
\mathcal{\widetilde{N}}_M^t=\mathcal{E}_{1, M}^t+\mathcal{E}_{2, M}^t .
\end{align}
Finally we define an auxiliary nonlinearity, $\mathcal{E}^t_{3, M}$, by 
\begin{align}
\mathcal{F} \mathcal{E}_{3,M}^t (\xi , p) : = \sum_{\abs{\omega} \geq \frac{1}{M}} \sum_{(p,q,r,s) \in \Gamma_{\omega}} \frac{e^{it\omega}}{i\omega} \mathcal{O}_2^t [F_q, G_r, H_s] (\xi) . 
\end{align}
The following estimate pertaining to the relationship between $\mathcal{E}^t_{3, M}$ and $\mathcal{E}^t_{2, M}$ is addressed in Remark \ref{remark.Eerror}:
    \begin{align*}
        \mathcal{E}_{2,M} (t) = \partial_t \mathcal{E}_{3,M} (t) +O(t^{-(1+\delta)})
    \end{align*}
    in the $Z_d$ norm for $t \geq 1$. The error between $ \mathcal{E}_{2,M} (t)$ and $\partial_t \mathcal{E}_{3,M} (t)$ will be denoted, $\mathcal{E}^t_{err, M}$, and defined in \eqref{eq 22}.

Now we have the following full decomposition for $\mathcal{N}^t$
    \begin{align}\label{eq Decomp}
        \mathcal{F} N^{t}=\Pi^{t}_M+ \underbrace{\mathcal{E}_{1, M}^{t}   + \mathcal{E}_{2, M}^{t} }_{\widetilde{\mathcal{N}}_M^t}=  (\Pi^{t}_M - \frac{\pi}{t}\mathcal{R}_M^{t})+ \mathcal{E}_{1, M}^{t}   + \mathcal{E}_{err, M}^{t}+ \partial_{t} \mathcal{E}_{3,M}^{t}+\frac{\pi}{t} \mathcal{R}_M^{t}.
    \end{align}

\subsection{Main Proposition}
After decomposing the nonlinearity in \eqref{eq newNLS}, we state the following main proposition that is an adapted version of Proposition 3.1 in \cite{HPTV15}. It is worth pointing out that a major difference is in \eqref{eq E3M}, where the upper bound depends on $M$ (while in \cite{HPTV15} it is not).

\begin{proposition}\label{prop 3.1}
For $T \geq 1$, assume that $F, G,H: \R \to S$ satisfy 
\begin{align}\label{eq 3.2}
\norm{F}_{X_T} + \norm{G}_{X_T} + \norm{H}_{X_T} \leq 1  . 
\end{align}
Then, for $t \in [\frac{T}{4} , T]$, if we let
\begin{align}
 \mathcal{N}^t-\frac{\pi}{t}\mathcal{R}_M^t=\mathcal{N}^t[F(t), G(t) , H(t)] - \frac{\pi}{t}\mathcal{R}_M^t[F(t), G(t) , H(t)] ,
\end{align}
then the following bounds hold uniformly in $T \geq 1$, $M\geq 0$:
\begin{align}
T^{- \delta} \norm{\int_{\R}  q_T (t) \cdot(\mathcal{N}^t-\frac{\pi}{t}\mathcal{R}_M^t) \, dt }_{S} \lesssim 1 , \\
T^{1+ \delta} \sup_{\frac{T}{4} \leq t \leq T} \norm{\mathcal{E}_{1,M}^t+\mathcal{E}_{err, M}^{t}}_{Z_d}  \lesssim 1 , \\
T^{\frac{1}{10}} \sup_{\frac{T}{4} \leq t \leq T} \norm{\mathcal{E}_{3,M}^t}_{S} \lesssim M  \label{eq E3M},
\end{align}
where the function $q_T(t)$ is defined in Definition \ref{defn cutoff}.

Assuming in addition that 
\begin{align}\label{eq 3.4}
\norm{F}_{X_T^+} + \norm{G}_{X_T^+} + \norm{H}_{X_T^+} \leq 1,
\end{align}
we also have that
\begin{align}
T^{-5\delta} \norm{\int_{\R} q_T (t) \cdot(\mathcal{N}^t-\frac{\pi}{t}\mathcal{R}_M^t) \, dt}_{S^+ } + T^{2\delta} \norm{\int_{\R} q_T(t) \cdot(\mathcal{N}^t-\frac{\pi}{t}\mathcal{R}_M^t) \, dt}_{S} \lesssim 1.
\end{align}
\end{proposition}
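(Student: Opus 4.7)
The plan is to estimate each summand in the decomposition
\begin{align}
\mathcal{N}^t-\tfrac{\pi}{t}\mathcal{R}_M^t=(\Pi_M^t-\tfrac{\pi}{t}\mathcal{R}_M^t)+\mathcal{E}_{1,M}^t+\mathcal{E}_{err,M}^t+\partial_t\mathcal{E}_{3,M}^t
\end{align}
separately, with the understanding that only $\partial_t\mathcal{E}_{3,M}^t$ actually requires the time-cutoff $q_T(t)$: on it one integrates by parts in $t$, moving $\partial_t$ onto $q_T$, so the pointwise-in-$t$ bound $\|\mathcal{E}_{3,M}^t\|_S\lesssim M\,t^{-1/10}$ from \eqref{eq E3M} together with $\int|q_T'|\leq 10$ yields the desired $T^\delta$-losing contribution to the first estimate. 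The remaining three pieces are bounded pointwise in $t\in[T/4,T]$ and the $q_T$-integration only costs a factor of $T$, which is absorbed in the weights. Throughout, the engine is a stationary-phase analysis of the oscillatory integrals in $(\eta,\kappa)$ with phase $2t\eta\kappa$, combined with the Young-type bound \eqref{eq Young} and the algebra property of $h_p^{d/2+}$ built into the $Z_d$-norm.

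For the principal remainder $\Pi_M^t-\tfrac{\pi}{t}\mathcal{R}_M^t$, I would Taylor-expand $\widehat{F_q}(\xi-\eta)\overline{\widehat{G_r}}(\xi-\eta-\kappa)\widehat{H_s}(\xi-\kappa)$ about $(\eta,\kappa)=(0,0)$. The $0$-th order term, integrated against $e^{2it\eta\kappa}$ on $\mathbb{R}^2$, produces exactly $\tfrac{\pi}{t}$ times $\widehat{F_q}(\xi)\overline{\widehat{G_r}}(\xi)\widehat{H_s}(\xi)$ and therefore contributes precisely $\tfrac{\pi}{t}\mathcal{R}_M^t$. The Taylor remainders carry at least one factor of $\eta$ or $\kappa$; integrating by parts in the conjugate variable converts this into $t^{-1}$ times a single derivative on one of $F,G,H$, and the resulting expression is estimated in the $Y$-norm and summed in $p$ via \eqref{defn Y}, giving $O(t^{-1-2\delta})$ in $S$ under \eqref{eq 3.2}. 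For $\mathcal{E}_{1,M}^t$, the cutoff $1-\phi(t^{1/4}\eta\kappa)$ restricts integration to $|\eta\kappa|\gtrsim t^{-1/4}$; integration by parts in whichever of $\eta,\kappa$ is not extremely small pays one copy of $t|\eta\kappa|\gtrsim t^{3/4}$, yielding decay far beyond $t^{-1-\delta}$ in $Z_d$, summed using the algebra property. For $\mathcal{E}_{err,M}^t$, which arises from $\partial_t\mathcal{E}_{3,M}^t-\mathcal{E}_{2,M}^t$, one picks up either $\partial_t\mathcal{O}_2^t$ (which brings down a factor $\eta\kappa$ on the support of $\phi(t^{1/4}\eta\kappa)$, worth $t^{-1/2}$) or terms of the form $\mathcal{O}_2^t[\partial_t F,G,H]$, which are absorbed using the $X_T$-control of $\|\partial_t F\|_S\lesssim (1+t)^{-1+3\delta}$.

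The main obstacle is the estimate \eqref{eq E3M} and the appearance of the $M$-dependence there, which is the structural cost of working on the irrational torus. Because the resonance set $\Omega$ accumulates at $0$, the small divisor $1/\omega$ in $\mathcal{E}_{3,M}^t$ is at worst of size $M$ rather than $O(1)$ as in the rational case of \cite{HPTV15}, and one must combine this brute bound with the intrinsic decay of the $\mathcal{O}_2^t$-integrand: on the region $|\eta\kappa|\lesssim t^{-1/4}$, a careful two-dimensional stationary phase gives $|\mathcal{O}_2^t[f,g,h]|\lesssim t^{-1/10}\cdot\|f\|_Y\|g\|_Y\|h\|_Y$ (the exponent $1/10$ being a conservative choice matching Definition \ref{defn Z_td}). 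Summation over $(p,q,r,s)\in\Gamma_\omega$ and $\omega$ using \eqref{eq Young} and the algebra of $h_p^{d/2+}$ delivers $\|\mathcal{E}_{3,M}^t\|_S\lesssim M\,t^{-1/10}$. Finally, the $S^+$-version under \eqref{eq 3.4} is obtained by applying the weights $x$ and $(1-\partial_{xx})^4$ and using the Leibniz rules for $\mathcal{I}^t$ and $\mathcal{N}^t$: both operations commute with the trilinear form up to redistribution of the weight among $F,G,H$, so each application costs at most one $X_T^+$-control and the stationary-phase gains of the preceding two paragraphs survive, producing the claimed $T^{-5\delta}$ and $T^{2\delta}$ bounds.
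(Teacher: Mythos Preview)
Your overall architecture is right: use the decomposition \eqref{eq Decomp}, integrate $\partial_t\mathcal{E}_{3,M}^t$ by parts against $q_T$, and estimate the other three pieces pointwise in $t$. But two genuine gaps prevent the argument from closing.

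\textbf{The high-frequency (in $x$) splitting is missing.} In the paper, the very first step is to peel off the contribution where at least one input has $x$-frequency $\gtrsim T^{1/6}$ (Lemma~\ref{lem 3.2}), and only \emph{after} that does one estimate $\mathcal{E}_{1,M}^t$, $\mathcal{E}_{3,M}^t$, $\mathcal{E}_{err,M}^t$ (Lemma~\ref{lem 3.3}) and $\Pi_M^t-\tfrac{\pi}{t}\mathcal{R}_M^t$ (Lemma~\ref{lem 3.7}). This is not cosmetic: the bound on $\mathcal{O}_1^t$ in Lemma~\ref{lem 3.6}, as well as the key estimate \eqref{eq 3.20} on $\mathcal{O}_2^t$, are stated and proved under the hypothesis $F=Q_{\leq T^{1/6}}F$, $G=Q_{\leq T^{1/6}}G$, $H=Q_{\leq T^{1/6}}H$. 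Your integration-by-parts scheme for $\mathcal{E}_{1,M}^t$ (``pay one copy of $t|\eta\kappa|$'') moves $\partial_\eta$ or $\partial_\kappa$ onto $\widehat{F},\widehat{G},\widehat{H}$; without the $Q_{\leq T^{1/6}}$ localization, the resulting weighted derivative is not controlled by the $Y$- or $S$-norms, and the gain you claim does not survive summation. The high-frequency piece must be treated separately, by a direct bilinear/dispersive argument as in Lemma~\ref{lem 3.2}.

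\textbf{The mechanism for the $\mathcal{E}_{3,M}^t$ bound is misidentified.} You write that on $|\eta\kappa|\lesssim t^{-1/4}$ a ``two-dimensional stationary phase'' yields $|\mathcal{O}_2^t|\lesssim t^{-1/10}$. There is no stationary phase available there: the phase $2t\eta\kappa$ still oscillates up to scale $t^{3/4}$, and a volume-of-support estimate alone does not produce $t^{-1/10}$. The paper instead uses that $\mathcal{I}^t[f,g,h]=\mathcal{U}(-t)\bigl(\mathcal{U}(t)f\cdot\overline{\mathcal{U}(t)g}\cdot\mathcal{U}(t)h\bigr)$ together with the $1$D dispersive decay $\|\mathcal{U}(t)g\|_{L^\infty}\lesssim t^{-1/2}\|g\|_{Y}$, which gives the much stronger bound $\|\mathcal{O}_2^t[f,g,h]\|_{L^2_\xi}\lesssim (1+|t|)^{-1+\delta/100}\|f\|_{L^2}\|g\|_Y\|h\|_Y$ (this is \eqref{eq 3.20}, imported from \cite{HPTV15}). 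Combined with the brute bound $|\omega|^{-1}\leq M$ and a duality argument in $L^2_{\xi,p}$, this yields $\|\mathcal{E}_{3,M}^t\|_{S}\lesssim M(1+|t|)^{-1+3\delta}$, far stronger than the $t^{-1/10}$ you target but obtained by a completely different route.

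Finally, your Taylor-expansion approach to $\Pi_M^t-\tfrac{\pi}{t}\mathcal{R}_M^t$ is heuristically the same as the paper's Lemma~\ref{lem 3.10}, but as written it is formal: the integral $\int_{\R^2}e^{2it\eta\kappa}\,d\eta\,d\kappa$ is not absolutely convergent, and Taylor-expanding $\widehat{F}_q,\widehat{G}_r,\widehat{H}_s$ requires smoothness of these Fourier transforms that you have not arranged. The paper resolves this by first localizing in physical space at scale $t^{1/4}$ (writing $F=F_c+F_f$ with $F_c(x,y)=\phi(t^{-1/4}x)F(x,y)$), so that $\widehat{F_c}$ is smooth enough for the stationary-phase lemma; the far piece $F_f$ is small in $L^2$ and handled directly (Claim~\ref{claim diff}). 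Your Taylor argument would need the same localization to become rigorous.
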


The proof of Proposition \ref{prop 3.1} can be found in Appendix \ref{app.Proof3.1}. Since the techniques and lemmas used in this proof are very similar to what appears in the work of Hani, Pausader, Tzvetkov, and Visciglia \cite{HPTV15}, those details are relegated to the Appendix \ref{app.Proof3.1}. There are important technicalities to verify in this new setting as well as some small changes to the proof strategy and functional setting. However, these changes do not present difficulties significant enough to include in the main body of this manuscript. We include the work in the appendix for completeness. Finally, we remark that if one replaces the $\norm{\cdot}_{X_T} \leq 1$  condition with $\norm{\cdot}_{X_T} \leq \varepsilon$ (or $\norm{\cdot}_{X_T^+} \leq 1$ by $\norm{\cdot}_{X_T^+} \leq \varepsilon$), then since the operators are trilinear, the bounds gain a factor of $\varepsilon^3$.

\section{Dynamics of the Quasi-Resonant Equations}\label{sec.dynamics}

In the following section, we begin the study of the effects of resonance and quasi-resonance on the dynamics of the full nonlinear Schr\"odinger system.

        \subsection{Irrationality}\label{subsec.irrationality}

The finer arithmetic properties of the irrational vector $\theta= (\theta_1, ..., \theta_d)$ will play a crucial role in dynamics of the quasi-resonant equations associated to the cubic NLS. We will define a full measure subset of  $\mathbb{R}_+^d$, $\Theta_1$, to which we will restrict our re-scalings of the $d$-dimensional torus. The definition of the set constructed will be informed by the works of \cite{GG22} and  \cite{SW20}.

We begin with a proposition on the size of the set of Diophantine vectors in $\mathbb{R}^d_+$.

    \begin{proposition}
        The subset of $\mathbb{R}^d_+$,
        \begin{align*}
            \left\{ \theta \in \mathbb{R}^d_+~:~ \exists C_{\theta}>0 \mbox{ such that }\left| \sum_{i=1}^d \theta_i^2 n_i \right| \geq C_{\theta}\|(n_1, ..., n_d)\|^{-d} \mbox{ for all }   (n_1, ..., n_d) \in \mathbb{Z}^d\setminus \{0\}\right\}
        \end{align*}
        is of full measure.
    \end{proposition}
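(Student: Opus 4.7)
The plan is to prove this via a straightforward Borel--Cantelli argument, after first passing to the squared variables. First I would observe that the map $\Phi : \mathbb{R}^d_+ \to \mathbb{R}^d_+$ given by $\Phi(\theta) = (\theta_1^2, \ldots, \theta_d^2)$ is a $C^\infty$ diffeomorphism (each coordinate map $t \mapsto t^2$ is smooth with nonvanishing derivative on $\mathbb{R}_+$), so it preserves both null sets and sets of full measure. It therefore suffices to prove the analogous Diophantine statement in the $\alpha$-coordinates: for a.e.\ $\alpha \in \mathbb{R}^d_+$, there exists $C_\alpha > 0$ with $|\langle n, \alpha\rangle| \geq C_\alpha \|n\|^{-d}$ for every $n \in \mathbb{Z}^d \setminus \{0\}$. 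Since ``being Diophantine'' is a local property on the $\alpha$-side, I would further localize to an arbitrary cube $Q = [a,b]^d \subset \mathbb{R}^d_+$ and show the property holds almost everywhere on $Q$; the full statement then follows by exhausting $\mathbb{R}^d_+$ by countably many such cubes.

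The core estimate is the measure of the ``bad slab'' associated to each nonzero $n \in \mathbb{Z}^d$:
\begin{align*}
B_n := \{\alpha \in Q : |\langle n, \alpha\rangle| < \|n\|^{-d}\}.
\end{align*}
Since the linear functional $\alpha \mapsto \langle n, \alpha\rangle$ has Euclidean gradient of magnitude $\|n\|$, the preimage of the interval $(-\|n\|^{-d}, \|n\|^{-d})$ intersected with $Q$ is contained in a slab of width $2\|n\|^{-d}/\|n\|$, giving $|B_n| \leq C_Q \|n\|^{-d-1}$ with $C_Q$ depending only on the diameter of $Q$. Summing in dyadic shells, using that $\#\{n \in \mathbb{Z}^d : \|n\| \asymp k\} \lesssim k^{d-1}$, one obtains
\begin{align*}
\sum_{n \in \mathbb{Z}^d \setminus \{0\}} |B_n| \;\lesssim\; \sum_{k \geq 1} k^{d-1} \cdot k^{-d-1} \;=\; \sum_{k\geq 1} k^{-2} \;<\; \infty.
\end{align*}
By the Borel--Cantelli lemma, for a.e.\ $\alpha \in Q$ the set $\{n \in \mathbb{Z}^d \setminus \{0\} : \alpha \in B_n\}$ is finite.

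To upgrade this to the desired uniform lower bound, I would next discard the null set $\bigcup_{n \neq 0} \{\alpha : \langle n, \alpha\rangle = 0\}$, which is a countable union of hyperplanes. For any $\alpha$ in the resulting full-measure set, let $n^{(1)}, \ldots, n^{(k)}$ be the finitely many Borel--Cantelli exceptions; since each $|\langle n^{(j)}, \alpha\rangle| > 0$, setting
\begin{align*}
C_\alpha := \min\Bigl(1, \min_{1 \leq j \leq k} |\langle n^{(j)}, \alpha\rangle| \cdot \|n^{(j)}\|^d\Bigr) > 0
\end{align*}
yields $|\langle n, \alpha\rangle| \geq C_\alpha \|n\|^{-d}$ for every nonzero $n$. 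Pulling back through $\Phi$ produces the full-measure subset of $\mathbb{R}^d_+$ in the statement.

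There is no real obstacle here; the proof is standard metric Diophantine approximation. The one point that requires slight attention is merely the bookkeeping in Step 3 to ensure the slab measure estimate has the correct $\|n\|^{-d-1}$ exponent, which is exactly what makes the series converge at the critical exponent $-d$ (rather than requiring the usual $-d-\varepsilon$ loss needed in inhomogeneous Diophantine approximation problems).
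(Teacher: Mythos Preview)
Your argument is correct and is precisely the standard Borel--Cantelli proof of this Diophantine fact. The paper does not actually supply its own proof of this proposition; it simply refers the reader to \cite{C57} and \cite{DGG17}, so there is nothing substantive to compare against. Your reduction via the locally bi-Lipschitz map $\theta\mapsto(\theta_1^2,\ldots,\theta_d^2)$ is exactly the device the paper invokes a few lines later when discussing why the squared-component conditions still describe full-measure sets of $\theta$.

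One small wording slip: you write ``dyadic shells'' but then use the thin-shell count $\#\{n:\|n\|\asymp k\}\lesssim k^{d-1}$ and sum $\sum_k k^{-2}$. That count is correct for unit-width annuli $k\le\|n\|<k+1$, not for dyadic annuli $2^j\le\|n\|<2^{j+1}$ (where the count is $\asymp 2^{jd}$). Either decomposition makes the series converge, so the argument is unaffected; just tidy the terminology.
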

    See \cite{C57} and \cite{DGG17} for more information. Moreover, we have another set of full measure to consider
    \begin{proposition}\label{Khintchin}
        The subset of $\mathbb{R}^d_+$,
        \begin{align*}
            \left\{ \theta \in \mathbb{R}^d_+~:~ \left| \theta_1^2 n_1+ \theta_2^2 n_2 \right| \leq \|(n_1, n_2)\|^{-1} \mbox{ for all } (n_1, n_2) \in \mathbb{Z}^2\setminus \{0\}  \right\}
        \end{align*}
        is of full measure.
    \end{proposition}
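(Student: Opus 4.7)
As written, the set in Proposition \ref{Khintchin} is empty: taking $(n_1,n_2)=(N,0)$ forces $\theta_1^2 N\leq 1/N$ for every $N$, which is impossible for any fixed $\theta_1>0$. Given the label and the pairing with the preceding (lower-bound, Diophantine) proposition, the plan is to treat the statement as its Khintchine dual: for a.e.\ $\theta\in\mathbb{R}^d_+$ there exist \emph{infinitely many} $(n_1,n_2)\in\mathbb{Z}^2\setminus\{0\}$ with $|\theta_1^2 n_1+\theta_2^2 n_2|\leq \|(n_1,n_2)\|^{-1}$. The proof will proceed by dyadic scaling, reducing to $\theta$ in a fixed box $B\subset\mathbb{R}^d_+$ bounded away from the coordinate hyperplanes, and then applying the bi-Lipschitz substitution $\phi_i:=\theta_i^2$, so that it suffices to show a.e.\ $\phi$ in the image box lies in infinitely many of the sets
\begin{align*}
 A_n := \bracket{\phi : |\phi_1 n_1+\phi_2 n_2|\leq \|n\|^{-1}}, \quad n\in\mathbb{Z}^2\setminus\{0\}.
\end{align*}

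The first step will be to estimate $|A_n\cap B|$ by Fubini, integrating first in the coordinate whose coefficient has the larger absolute value, to obtain $|A_n\cap B|\lesssim \|n\|^{-2}$. Summing over the dyadic annulus $\|n\|_\infty\sim 2^k$ yields $\sum_{\|n\|_\infty\sim 2^k}|A_n\cap B|\asymp 1$, so that $\sum_n |A_n\cap B|=\infty$. To convert divergence of these measures into a.e.\ membership in infinitely many $A_n$, the plan is to invoke the divergence part of the Khintchine--Groshev theorem for a single linear form in two real variables with monotone approximation function $\psi(q)=1/q$ as a black box; equivalently, one applies the divergence Borel--Cantelli lemma of Erd\H{o}s--R\'enyi once the quasi-independence estimate
\begin{align*}
  |A_n \cap A_m \cap B|\lesssim |A_n\cap B|\,|A_m\cap B|
\end{align*}
has been verified for a sufficient family of pairs. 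When $n$ and $m$ are non-parallel the two strips are transverse and the bound is immediate from Fubini.

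The main obstacle is precisely the quasi-independence estimate for pairs $(n,m)$ with $n,m$ nearly parallel, since then $A_n$ and $A_m$ overlap far more than an independence assumption would predict. Such exceptional pairs must be isolated and discarded by a separate covering argument showing that the $\phi$ lying in unboundedly many nearly-parallel strips form a null set. This is the standard delicate step in the metrical theory of Diophantine approximation and is exactly what underpins the Khintchine--Groshev divergence theorem that one invokes as a black box; once it is in hand, the divergence Borel--Cantelli lemma concludes the proof.
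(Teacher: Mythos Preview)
You correctly spot that the statement as printed is vacuous (``for all'' should read ``for infinitely many''); the same slip recurs in the definition of $\Theta_2$. Your reinterpretation is the intended one.

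As for the argument itself: the paper does not supply a proof. It simply records that the proposition ``is a corollary to the theorem of Khintchine, \cite{K24}, that appears in \cite{GG22} as Theorem 2.5,'' and then notes separately that the locally bi-Lipschitz map $\theta\mapsto(\theta_1^2,\dots,\theta_d^2)$ transfers full-measure statements from the $\phi$-variables back to $\theta$. That is exactly your option of invoking Khintchine--Groshev as a black box after the substitution $\phi_i=\theta_i^2$; so on that branch your plan coincides with the paper's.

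Your alternative route---verifying quasi-independence of the strips $A_n$ and applying the Erd\H{o}s--R\'enyi divergence Borel--Cantelli lemma---goes well beyond what the paper attempts: you are sketching the proof of the classical divergence theorem rather than citing it. The obstacle you flag (nearly parallel $n,m$) is real and is precisely the content of the metric theory being quoted, so there is no reason to redo it here. One small sharpening: for the version actually stated in the proposition (right-hand side $\|n\|^{-1}$, no logarithm), Dirichlet's approximation theorem already gives infinitely many solutions for \emph{every} irrational ratio $\theta_2^2/\theta_1^2$, so Khintchine is only genuinely needed for the stronger $(\|n\|\log\|n\|)^{-1}$ condition used in $\Theta_2$.
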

    
   Proposition \ref{Khintchin} is a corollary to the theorem of Khintchine,  \cite{K24}, that appears in \cite{GG22} as Theorem 2.5. It is important to observe that the set of vectors in Proposition \ref{Khintchin} only imposes conditions on the first two components of the vectors $\theta$. In the paper of Giuliani and Guardia \cite{GG22}, solutions for a two-dimensional torus system are constructed. There is a simple method to extend those solutions to arbitrarily high-dimensional solutions. This is the method that we will employ, so it suffices to first construct solutions on the first two components of the $d$-dimensional irrational torus. 
   
   One may observe that a direct application of the theorems pertaining to the measures of the subsets described above would yield a description of vectors defined with the squares of the components $(\theta^2_1, ..., \theta^2_d)$ instead of the vectors $\theta$ since the conditions are defined with respect to the squares of the components. This is overcome by observing that the map, $\theta \mapsto (\theta^2_1, ..., \theta^2_d)$, is locally bi-Lipschitz and thus preserves Lebesgue nullity of measures of sets and Hausdorff dimension.
   
   Finally, we define the generic set of irrational vectors that we are interested in considering:
   \begin{defn}\label{def.Theta}
        Let $\Theta_2 \subset \Theta_1 \subset \mathbb{R}_+^d$ be defined by 
            \begin{align*}
                \Theta_1&:= \bigcup_{\gamma \geq d} \left\{ \theta \in \mathbb{R}^d_+~:~ \exists C_{\theta}>0 \mbox{ such that }\left| \sum_{i=1}^d \theta_i^2 n_i \right| \geq C_{\theta}\|(n_1, ..., n_d)\|^{-\gamma} \mbox{ for all }   (n_1, ..., n_d) \in \mathbb{Z}^d\setminus \{0\}\right\} , \\
                \Theta_2&:= \Theta_1 \cap \left\{ \theta \in \mathbb{R}^d_+~:~ \left| \theta_1^2 n_1+ \theta_2^2 n_2 \right| \leq \left( \|(n_1, n_2)\|\log \|(n_1, n_2)\|\right)^{-1} \mbox{ for all } (n_1, n_2) \in \mathbb{Z}^2\setminus \{0\}  \right\} .
            \end{align*}
   \end{defn}

\begin{remark}\label{rmk.Besi}
   Finally, the following theorem of Jarnik and Besicovitch, \cite{B34}, implies that the complement of $\Theta_1$ has Hausdorff dimension zero.
   
   \begin{theorem}
        The set 
        \begin{align}
            E_q:= \left\{x\in \mathbb{R}~:~ |x-\frac{m}{n}| \leq \frac{1}{n^q} \mbox{ for infinitely many rationals } \frac{m}{n} \right\} 
        \end{align}
        has Hausdorff dimension $2/q$.
   \end{theorem}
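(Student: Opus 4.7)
The plan is to prove the matching bounds $\dim_{\mathcal{H}}(E_q) \leq 2/q$ and $\dim_{\mathcal{H}}(E_q) \geq 2/q$ separately. For the upper bound, by translation invariance and countable stability of Hausdorff dimension, it suffices to bound $\dim_{\mathcal{H}}(E_q \cap [0,1])$. Writing $I_{m,n} := [\tfrac{m}{n} - n^{-q}, \tfrac{m}{n} + n^{-q}]$, the definition of $E_q$ gives the limsup inclusion
\begin{equation*}
E_q \cap [0,1] \subset \bigcap_{N \geq 1} \bigcup_{n \geq N} \bigcup_{m=0}^{n} I_{m,n}.
\end{equation*}
For any $s > 2/q$, the family $\{I_{m,n}\}_{n \geq N,\, 0 \leq m \leq n}$ covers $E_q \cap [0,1]$ by sets of diameter at most $2N^{-q}$, and
\begin{equation*}
\sum_{n \geq N} \sum_{m=0}^{n} |I_{m,n}|^s \lesssim \sum_{n \geq N} (n+1)\, n^{-qs} \lesssim \sum_{n \geq N} n^{1-qs},
\end{equation*}
which converges (since $1 - qs < -1$) and whose tail vanishes as $N \to \infty$. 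Hence $\mathcal{H}^s(E_q \cap [0,1]) = 0$ for every $s > 2/q$.

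For the lower bound, fix $s < 2/q$. I will construct a compact Cantor-type subset $K \subset E_q \cap [0,1]$ together with a Borel probability measure $\mu$ supported on $K$ satisfying the Frostman-type bound $\mu(B(x,r)) \leq C r^s$ for every $x \in K$ and every $r > 0$. By the mass distribution principle this forces $\mathcal{H}^s(K) \geq C^{-1} > 0$, hence $\dim_{\mathcal{H}}(E_q) \geq s$; letting $s \nearrow 2/q$ closes the argument. To build $K$, pick a rapidly growing integer sequence $N_1 < N_2 < \cdots$ (the precise growth rate will be fixed at the end) and set $K_0 := [0,1]$. Inductively form $K_k \subset K_{k-1}$ by replacing each connected component $J$ of $K_{k-1}$ with a maximal family of pairwise disjoint intervals of the form $I_{m, N_k}$ with $m/N_k \in J$; a standard Farey-counting argument yields on the order of $|J| N_k$ children per parent, each of length $2 N_k^{-q}$. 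Set $K := \bigcap_k K_k$; every $x \in K$ lies in $I_{m, N_k}$ for some $m$ at every stage $k$, so $K \subset E_q$. Define $\mu$ by distributing mass uniformly from each parent to its children at every stage and passing to the weak-$*$ limit of the resulting probability measures on $K_k$.

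The main obstacle will be verifying the Frostman bound $\mu(B(x,r)) \lesssim r^s$ at every scale $r$, not merely at the construction scales $r \sim N_k^{-q}$. At a construction scale the estimate reduces to a telescoping product over the branching ratios chosen during the iteration. At an intermediate scale $N_{k+1}^{-q} \ll r \ll N_k^{-q}$, I would need to bound the number of level-$(k+1)$ children of a single level-$k$ interval that meet $B(x,r)$: this count is controlled by $\lesssim \max(1, r N_{k+1})$, and must be balanced against the per-child mass. This balancing forces a growth condition of the form $N_{k+1} \geq N_k^{A(s,q)}$ for an explicit exponent $A(s,q) > 0$ that can be read off from the inequality $1 - qs < -1 + \varepsilon$; any sequence $\{N_k\}$ satisfying this growth suffices. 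Granted the Frostman bound, the mass distribution principle delivers $\dim_{\mathcal{H}}(E_q) \geq s$, which together with the upper bound completes the proof.
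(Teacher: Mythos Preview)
The paper does not prove this theorem; it is quoted as the classical Jarn\'{i}k--Besicovitch theorem (cited as \cite{B34} in the paper) and used only as a black box to conclude that $\mathbb{R}^d_+ \setminus \Theta_1$ has Hausdorff dimension zero. There is therefore no proof in the paper to compare against.

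Your argument is the standard modern proof. The upper bound via the natural limsup cover is correct as written. The lower bound sketch---a Cantor-type set built from intervals $I_{m,N_k}$ along a rapidly growing sequence $\{N_k\}$, equipped with the obvious self-similar measure, followed by the mass distribution principle---is exactly the classical route. Two small remarks: the phrase ``Farey-counting'' is a misnomer, since you are counting fractions with \emph{fixed} denominator $N_k$ in an interval $J$, and that count is simply $\sim |J|N_k$; and for the construction to branch at all you need $N_k \gtrsim N_{k-1}^{q}$, which is consistent with the super-geometric growth you impose for the Frostman bound at intermediate scales. Otherwise the outline is sound and, once the routine bookkeeping is filled in, yields the result.
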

\end{remark}

\subsection{Irrational Resonances and Quasi-Resonances in $\mathbb{Z}^d$}\label{subsec.resonances}

We consider the Hamiltonian, $\widetilde{h}$, associated to the cubic NLS on the $d$-dimensional torus \eqref{eq TNLS}:
	\begin{align} \label{compNLSHam}
		\widetilde{h}(\{z_p\})&= h_0 + N \nonumber\\
				&= \tfrac{1}{2}\sum_{p \in \mathbb{Z}^d} \lambda_p |z_p|^2+ \tfrac{1}{4} \sum_{p+r=q+s} z_{q}z_{s}\overline{z}_{r}\overline{z}_{p}.
	\end{align}

We recall that
	\begin{align*}
		\mathcal{M} & : = \{ (p,q,r,s) \in (\Z^{d})^4 : p-q + r-s =0\} ,\\
\Gamma_{0} & : = \{ (p,q,r,s) \in \mathcal{M} : \lambda_p - \lambda_q + \lambda_r - \lambda_s = 0 \},
	\end{align*}
where $\Gamma_0$ will denote the set of resonances whose definition appears in \eqref{eq MW}. 

For $K>0$, let  $\Gamma_0^K$ will denote the set of resonances below a fixed frequency level
	\begin{align*}
		\Gamma_0^K&:= \left\{ (p, q, r, s) \in \Gamma_0~:~ |p|, |q|, |r|, |s|\leq K\right\}.
	\end{align*}

For any vector, $\theta= (\theta_1, ..., \theta_d)$, the resonance condition $ \lambda_{p}+\lambda_{r}=\lambda_{q}+\lambda_{s}$ is equivalent to 
		\begin{align*}
			 \sum_{i=1}^d\theta_i^2\left[p_i^2 +r_i^2 -q_i^2 - s_i^2\right]=0.
		\end{align*}
If $\theta$ is an irrational vector, then the above identity is equivalent to the following set of independent one-dimensional resonance equations
		\begin{align*}
			p_i^2 +r_i^2 =q_i^2 + s_i^2 \mbox{ for } i=1, ..., d.
		\end{align*}
	By the definition of vector addition, the conservation of momentum condition also decomposes into one-dimensional conservation of momentum equations.  
	Thus $\Gamma_0= \cap_{i=1}^d\Gamma_{0,i}$, where
		\begin{align*}
			\Gamma_{0,i} := &\left\{ (p, q, r, s) \in (\mathbb{Z}^d)^4~:~ p_i +r_i =q_i + s_i\right\}\bigcap \left\{ (p, q, r, s) \in (\mathbb{Z}^d)^4~:~ p_i^2 +r_i^2 =q_i^2 + s_i^2\right\}.
		\end{align*}

The nice feature about the dimensional decomposition of $\Gamma_0$ is that the one-dimensional resonances are rather simple.  In fact, we can state a simpler characterization with a basic arithmetic lemma:
	\begin{lemma}\label{char}
		For $i=\{1,...,  d\}$,
		\begin{align*}
			\Gamma_{0,i} &= \left\{ (p, q, r, s) \in (\mathbb{Z}^d)^4~:~ p_i=q_i \mbox{ and  } r_i=s_i \right\} \bigcup \left\{ (p, q, r, s) \in (\mathbb{Z}^d)^4~:~ p_i=s_i \mbox{ and } r_i=q_i \right\} .
		\end{align*}
	\end{lemma}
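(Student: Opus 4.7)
The plan is to prove this by a direct algebraic manipulation of the two defining conditions of $\Gamma_{0,i}$, reducing the pair (conservation of momentum, conservation of energy) to a statement about multisets of roots of a quadratic polynomial.

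First I would fix $i \in \{1, \ldots, d\}$ and consider $(p,q,r,s) \in \Gamma_{0,i}$. By definition we have the two scalar identities $p_i + r_i = q_i + s_i$ and $p_i^2 + r_i^2 = q_i^2 + s_i^2$. Squaring the first identity gives $p_i^2 + 2 p_i r_i + r_i^2 = q_i^2 + 2 q_i s_i + s_i^2$, and subtracting the second identity yields $p_i r_i = q_i s_i$. Thus $\{p_i, r_i\}$ and $\{q_i, s_i\}$ have the same sum and the same product, so they are the (multi)sets of roots of the same monic quadratic $X^2 - (p_i+r_i)X + p_i r_i \in \mathbb{Z}[X]$. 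Consequently $\{p_i, r_i\} = \{q_i, s_i\}$ as multisets, which gives exactly the two alternatives in the statement: either $p_i = q_i$ and $r_i = s_i$, or $p_i = s_i$ and $r_i = q_i$.

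For the reverse inclusion, given any $(p,q,r,s)$ falling into one of the two sets on the right-hand side, one checks immediately that both $p_i + r_i = q_i + s_i$ and $p_i^2 + r_i^2 = q_i^2 + s_i^2$ hold, so $(p,q,r,s) \in \Gamma_{0,i}$.

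I do not anticipate any real obstacle here: the lemma is a short, self-contained arithmetic identity about pairs of integers with matched first and second power sums. The only point worth stating carefully is that the argument works in $\mathbb{Z}$ (or any integral domain) because unique factorization of monic quadratics over $\mathbb{Z}$ is automatic. The appeal to the ``irrationality'' of $\theta$ has already been used before the lemma to reduce the $d$-dimensional resonance condition to $d$ one-dimensional conditions, so Lemma \ref{char} itself is purely a one-variable statement and no further hypothesis on $\theta$ enters.
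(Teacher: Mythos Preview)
Your proof is correct and is exactly the standard elementary argument one expects; the paper itself does not give a proof but simply cites \cite{SW20} for ``the very short proof,'' which is this same sum-and-product/quadratic-roots computation. There is nothing to add.
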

The proof is fairly straightforward and well-known. We refer to \cite{SW20} for the very short proof.

	\subsection{Rational Quasi-Resonances}

	In this subsection , we consider the two-dimensional case because that is the setting in which Giuliani and Guardia \cite{GG22} work. Let $\vec{m}=(m_1, m_2) \in \mathbb{Z}^2_+$ be positive integers. We define the $\vec{m}$- resonances, $\Gamma_{0, \vec{m}} \subset (\mathbb{Z}^d)^4$,  by the following condition: $(p, q, r, s) \in \Gamma_{0, \vec{m}}$ if both of the following identities hold
	\begin{align*}
		p+r&=q+s ,\\
		m_1\left[p_1^2 +r_1^2\right]+m_2\left[p_2^2 +r_2^2\right]&=   m_1\left[q_1^2 +s_1^2\right]+m_2\left[q_2^2 + s_2^2\right] .
	\end{align*}
	
As described above, elements in $\Gamma_{0, \vec{m}}$ can form parallelograms that are not axis-parallel. On the other hand, if $(m_1, m_2)$ is close to $(\theta^2_1, \theta^2_2)$, then elements of  $\Gamma_{0, \vec{m}}$ contain quartets that are quasi-resonant with respect to $(\theta_1, \theta_2)$. We refer the reader to \cite{GG22} for further discussion.

		\subsection{Growth}\label{subsec.growthdynamics}

Consider the following resonant (resonant only in the non-compact variable) Hamiltonian associated to equation \eqref{eq NLS}:
	\begin{align}\label{Rreso}
		H^*(\psi)&= H_0(\psi)+N^*(\psi)\\
				&=\tfrac{1}{2} \int_{\xi \in \mathbb{R}} \sum_{p \in \mathbb{Z}^d} (|\xi|^2+ \lambda_{ p})|\widehat{\psi}(\xi, p)|^2+ \tfrac{1}{4}\int_{\xi \in \mathbb{R}} \sum_{p+r=q+s} \widehat{\psi}(\xi, q) \widehat{\psi}(\xi, s) \overline{\widehat{\psi}}(\xi, r)  \overline{\widehat{\psi}}(\xi, p)\nonumber .
	\end{align} 
The heuristics of the growth argument are as follows. Given a fixed time interval $[0, T]$, we determine a suitable $\vec{m}$ resonant system, 
	\begin{align*}
		H^*_{\vec{m}}(\psi)&
				=\tfrac{1}{2} \int_{\xi \in \mathbb{R}}  \sum_{p \in \mathbb{Z}^d}(|\xi|^2+ \lambda_{ p})|\widehat{\psi}(\xi, p)|^2+ \tfrac{1}{4} \int_{\xi \in \mathbb{R}}\sum_{p+r=q+s \atop (p, q, r, s) \in \Gamma_{\vec{m}}} \widehat{\psi}(\xi, q) \widehat{\psi}(\xi, s) \overline{\widehat{\psi}}(\xi, r) \overline{\widehat{\psi}}(\xi, p).
	\end{align*}
In the the exact same fashion as in Giuliani and Guardia \cite{GG22}, a solution to the equation determined by $H^*_{\vec{m}}$ is constructed to grow over time. More importantly, the flow determined by $H^*_{\vec{m}}$ is conditioned to be a perturbation of the Hamiltonian flow determined by $H^*$ which allows one to construct a solution to the equation determined by $H^*$ that grows on the time interval $[0, T]$.

The fact that the approximation of the dynamics depend on the time interval affects the potential to replicate the argument of \cite{HPTV15} in order to construct solutions whose regularity grows to infinity as time approaches infinity. In their work, a solution to the completely resonant equation is constructed whose $k$-Sobolev norm grows to infinity as time approaches infinity. Modified scattering allows them to relate that solution to a solution to the original cubic NLS system. As will be demonstrated later and which is apparent in the proof of the main theorem of Giuliani and Guardia \cite{GG22}, initial data that is frequency localized will not grow for all $H^*_{\vec{m}}$ systems. In particular, the size of the Fourier support at time zero for a solution to $H^*_{\vec{m}}$ whose $h^k$ norms grow to infinity as time approaches infinity must grow as $(m_1, m_2)$ better approximates $(\theta^2_1, \theta^2_2)$.

Specifically, the authors in Giuliani and Guardia \cite{GG22} prove the following theorem
	\begin{theorem}\label{GrowThm}
		There exists a set of irrational numbers $\mathcal{S} \subset [1, \infty)$, which has full measure and Hausdorff dimension 1, such that for all $\omega \in \mathcal{S}$ the following holds.

	Let $k>0$, $k \neq 1$, and fix $\mathcal{C}\gg 1$. Then the cubic NLS on the irrational torus $\mathbb{T}_{(1, \omega)}^2$ possesses a solution $u(t)$ such that 
		\begin{align*}
			\|u(T)\|_{h^k} \geq \mathcal{C}\|u(0)\|_{h^k},
		\end{align*}
	where 
		\begin{align*}
			T \leq \exp(\mathcal{C}^{\beta})
		\end{align*}
	for some constant $\beta=\beta(k)>0$. Moreover, there exists a constant $\eta= \eta(k)>0$ such that 
		\begin{align*}
			\|u(t)\|_{L^2(\mathbb{T}^2_{(1, \omega)})}\leq \mathcal{C}^{-\eta}
		\end{align*}
	for all $t \in [0, T]$.
	\end{theorem}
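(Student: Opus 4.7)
The plan is to transplant the combinatorial frequency-cascade construction of Colliander, Keel, Staffilani, Takaoka, and Tao \cite{CKSTT10} to the irrational torus by way of the rational quasi-resonant approximation $H^*_{\vec{m}}$ described in Section~\ref{sec.dynamics}. First I would define $\mathcal{S}$ by a Khintchine-type condition: $\omega \in \mathcal{S}$ precisely when $\omega^2$ admits infinitely many very good rational approximations $m_1/m_2$, i.e.\ $|m_1 + m_2 \omega^2| \lesssim \psi(\|(m_1,m_2)\|)$ for a prescribed slowly decaying function $\psi$ comparable to the one in Proposition~\ref{Khintchin}. Khintchine's theorem delivers full Lebesgue measure, and since $\mathcal{S}\subset\mathbb{R}$ its Hausdorff dimension is automatically at most $1$; full measure then forces it to equal $1$.

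Next, given the target ratio $\mathcal{C}$, I would pick $\vec{m}=(m_1,m_2)$ very close to $(1,\omega^2)$ (guaranteed by $\omega\in\mathcal{S}$) and run the I-team toy-model analysis on $H^*_{\vec{m}}$. Concretely, one constructs a finite set $\Lambda\subset\mathbb{Z}^2$ arranged into $\mathcal{N}$ generations so that (i) the quartets of $\Gamma_{0,\vec{m}}$ lying inside $\Lambda$ close off into an invariant finite-dimensional subsystem and (ii) the resulting toy model admits a sliding-hump solution whose $\ell^2$ mass migrates from generation $1$ to generation $\mathcal{N}$. Choosing $\mathcal{N}$ proportional to a power of $\log\mathcal{C}$ and rescaling the amplitude downward yields a solution $u_{\vec{m}}$ of $H^*_{\vec{m}}$ with $\|u_{\vec{m}}(T_*)\|_{h^k}\geq 2\mathcal{C}\|u_{\vec{m}}(0)\|_{h^k}$ and $\|u_{\vec{m}}(t)\|_{L^2}\lesssim \mathcal{C}^{-\eta}$ in a time $T_*$ that grows at most polynomially in $\mathcal{C}$.

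The main obstacle is the transfer from $H^*_{\vec{m}}$ to the genuine resonant Hamiltonian $H^*$ of $\mathbb{T}^2_{(1,\omega)}$. The difference $H^*-H^*_{\vec{m}}$ retains only $\vec{m}$-nonresonant quartets; on the Fourier support of $u_{\vec{m}}$ each such quartet carries a time-oscillating phase $e^{it\tau}$ with $|\tau|\gtrsim |m_1+m_2\omega^2|\cdot K^2$, where $K=K(\Lambda)$ is the largest mode in $\Lambda$. The Diophantine quality of $\vec{m}$ makes $|\tau|$ large enough on $[0,T]$ to perform a one-step Birkhoff normal form (integration by parts in time), converting $H^*-H^*_{\vec{m}}$ into a higher-order remainder whose $h^k$-contribution is controlled by Gronwall. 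The delicate part is balancing three competing parameters: the size of $\Lambda$ (which fixes $\mathcal{C}$ and $\eta$), the required Diophantine quality of $\vec{m}$ on $\Lambda$ (which restricts the admissible approximants), and the time horizon $T$ (which must accommodate both the toy-model growth and the perturbation budget). Optimizing these trade-offs yields the claimed bound $T\leq \exp(\mathcal{C}^{\beta(k)})$ with $\beta(k)>0$ depending only on $k$, and the $L^2$ smallness of $u$ is inherited from $u_{\vec{m}}$ by the same perturbative argument.
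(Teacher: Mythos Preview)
The paper does not prove this theorem at all: it is quoted verbatim from Giuliani and Guardia \cite{GG22} (see the sentence immediately preceding the statement). So there is no in-paper proof to compare against; your outline is effectively a sketch of the argument in \cite{GG22}, and at the level of strategy --- Khintchine set, I-team generation set placed on the $\vec{m}$-resonant lattice, normal-form transfer to the true flow --- it matches what Giuliani and Guardia actually do.

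Two points in your sketch are inaccurate and would need to be fixed in an actual write-up. First, the target Hamiltonian for Theorem~\ref{GrowThm} is the torus Hamiltonian $\widetilde{h}$ of \eqref{compNLSHam}, not the waveguide resonant Hamiltonian $H^*$ from \eqref{Rreso}; the passage from $\widetilde{h}$ to $H^*$ is the content of Corollary~\ref{GrowCor}, which comes \emph{after} this theorem. Second, your lower bound on the nonresonant phase is inverted. For a $\vec{m}$-resonant quartet the true phase $A_1+\omega^2 A_2$ is \emph{small}, of order $|\omega^2-m_2/m_1|\cdot K^2$; for a $\vec{m}$-nonresonant quartet on the support $\Lambda$ one has $|m_1A_1+m_2A_2|\geq 1$, hence $|A_1+\omega^2A_2|\gtrsim 1/m_1 - |\omega^2-m_2/m_1|K^2$, which is of order $1/|\vec m|$ once the approximation is good enough. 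The quantity $|m_1+m_2\omega^2|\cdot K^2$ you wrote is neither of these (and has the wrong sign convention for the approximation error). Getting this right is exactly what drives the parameter balancing you allude to, and in \cite{GG22} it is what produces the $\exp(\mathcal{C}^\beta)$ bound rather than the polynomial toy-model time you claim.
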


The authors further demonstrate the existence of a family of fractal sets for which the rate of growth solutions is increased as the dimension of the set of $\omega$ decreases. For our purposes, we will consider only the generic case. We note that the definition of $\mathcal{S}$ is of full measure by Proposition \ref{Khintchin}. Moreover one can see that the set $ \{(1, \omega) : \omega \in \mathcal{S}\}$ encapsulates our notion (up to rescalings) of good approximation by rationals:
\begin{align}
    \left| \theta_1^2 n_1+ \theta_2^2 n_2 \right| \leq (\|(n_1, n_2)\|\log \|(n_1, n_2)\|)^{-1} \mbox{ for all } (n_1, n_2) \in \mathbb{Z}^2\setminus \{0\} .
\end{align}
Thus if $\theta \in \Theta_2$, then $(1 , \theta_2/\theta_1) \in \mathcal{S}$.

The following simple corollary to Theorem \ref{GrowThm}, given by freezing the noncompact variable and $d-2$ torus variables, then follows:
	\begin{corollary}[Corollary to Theorem \ref{GrowThm}]\label{GrowCor}
		Let $\theta \in \Theta_2$. Let $k>0$, $k \neq 1$, and fix $\mathcal{C}\gg 1$. Then the Hamiltonian system associated to \eqref{Rreso} possesses a solution $v(t)$ such that 
		\begin{align*}
			\|v(T)\|_{H^k} \geq \mathcal{C}\|v(0)\|_{H^k},
		\end{align*}
	where 
		\begin{align*}
			T \leq \exp(\mathcal{C}^{\beta})
		\end{align*}
	for some constant $\beta=\beta(k)>0$. Moreover, there exists a constant $\eta= \eta(k)>0$ such that 
		\begin{align*}
			\|v(t)\|_{L^2_{\xi, p}} \leq \mathcal{C}^{-\eta}
		\end{align*}
	for all $t \in [0, T]$.
	\end{corollary}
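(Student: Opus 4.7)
The plan is to lift the growing solution provided by Theorem \ref{GrowThm} through three successive reductions: first rescaling from $\mathbb{T}^2_{(1,\omega)}$ to $\mathbb{T}^2_{(\theta_1,\theta_2)}$, then embedding into $\mathbb{T}^d_\theta$ by putting all mass on the zero mode in the extra torus directions, and finally tensoring with a fixed profile in the $\mathbb{R}$-variable to obtain a solution on $\mathbb{R}\times\mathbb{T}^d_\theta$. The structural fact that makes this scheme work is that the nonlinearity of $H^*$ defined in \eqref{Rreso} is diagonal in $\xi$: writing the Hamiltonian equation on the Fourier side gives, for each fixed $\xi$, exactly the Fourier-side cubic NLS on $\mathbb{T}^d_\theta$, with the linear term $|\xi|^2$ contributing only a free phase. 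Consequently an ansatz of the form $\widehat{v}(t,\xi,p)=e^{-it|\xi|^2}\chi(\xi)\widehat{w}(t,p)$ solves $H^*$ whenever $w$ solves cubic NLS on $\mathbb{T}^d_\theta$.

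Concretely, since $\theta\in\Theta_2$, the ratio $\omega:=\theta_2/\theta_1$ satisfies the Khintchine-type condition defining $\mathcal{S}$, so Theorem \ref{GrowThm} produces $u(t)$ on $\mathbb{T}^2_{(1,\omega)}$ with the prescribed growth, time horizon, and $L^2$ bound. Setting $\tilde u(t,y):=\theta_1\, u(\theta_1^2 t, y)$ yields a solution on $\mathbb{T}^2_{(\theta_1,\theta_2)}$ because the factor $\theta_1^2$ in $\lambda_p=\theta_1^2(p_1^2+\omega^2 p_2^2)$ is absorbed by the rescaling of time and amplitude; this preserves the $h^k$ growth ratio exactly and only multiplies the time horizon by the $\theta$-dependent constant $\theta_1^{-2}$. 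I then lift to $\mathbb{T}^d_\theta$ by defining $\widehat{w}(t,p_1,\dots,p_d)=\widehat{\tilde u}(t,p_1,p_2)$ if $p_3=\cdots=p_d=0$ and $0$ otherwise; momentum conservation $p+r=q+s$ combined with Fourier support on the slice $\{p_3=\cdots=p_d=0\}$ forces all four indices to lie in that slice, so the $d$-dimensional cubic nonlinearity reduces to the two-dimensional one, and the linear symbol matches since $\lambda_p$ restricted to the slice equals the two-dimensional eigenvalue.

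With $w$ in hand, the final step is to choose $\chi\in H^k(\mathbb{R})$ (for instance a smooth bump with Fourier support in a small ball so that $\|\chi\|_{H^k}\lesssim \|\chi\|_{L^2}$) and set $\widehat{v}(t,\xi,p):=e^{-it|\xi|^2}\chi(\xi)\widehat{w}(t,p)$; the verification that $v$ is a Hamiltonian flow line of $H^*$ is a one-line computation using the decoupling noted above. The norm bookkeeping then gives
\begin{align}
\|v(t)\|_{H^k}^2 \;\approx\; \|\chi\|_{H^k}^2\|w(t)\|_{L^2}^2+\|\chi\|_{L^2}^2\|w(t)\|_{h^k}^2,
\qquad
\|v(t)\|_{L^2_{\xi,p}}^2 = \|\chi\|_{L^2}^2\|w(t)\|_{L^2}^2,
\end{align}
so $L^2$-conservation of $w$ together with $\|w(T)\|_{h^k}\geq \mathcal{C}\|w(0)\|_{h^k}$ yields $\|v(T)\|_{H^k}/\|v(0)\|_{H^k}\gtrsim\mathcal{C}$, and the $L^2$ bound of Theorem \ref{GrowThm} immediately gives $\|v(t)\|_{L^2_{\xi,p}}\lesssim\mathcal{C}^{-\eta}$. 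The only real obstacle is the routine bookkeeping of absorbing the $\theta$-dependent rescaling constants and the $\chi$-dependent prefactors into the exponents $\beta$ and $\eta$: this is handled by applying Theorem \ref{GrowThm} with $\mathcal{C}$ slightly enlarged, since the constants entering do not depend on $\mathcal{C}$ and $\log(\text{const})/\mathcal{C}^\beta\to 0$ as $\mathcal{C}\to\infty$. No new analytic input beyond Theorem \ref{GrowThm} is needed.
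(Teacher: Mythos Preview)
Your strategy is correct and matches the paper's: gauge out the $|\xi|^2$ phase to decouple the $H^*$ flow into a $\xi$-parametrized family of torus cubic NLS equations, lift the Giuliani--Guardia solution from $\mathbb{T}^2$ to $\mathbb{T}^d_\theta$ by supporting it on the slice $\{p_3=\cdots=p_d=0\}$, and then tensor with a profile in $\xi$. Your norm bookkeeping at the end is also the right computation.

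There is, however, a genuine gap in the tensoring step. The product ansatz $\widehat v(t,\xi,p)=e^{-it|\xi|^2}\chi(\xi)\widehat w(t,p)$ is \emph{not} a flow line of $H^*$ when $\chi$ is a smooth bump. After the gauge, the equation for $b_\xi(p):=e^{it|\xi|^2}\widehat v(t,\xi,p)$ reads
\[
i\partial_t b_\xi(p)=\lambda_p b_\xi(p)+\sum_{p=q+s-r}b_\xi(q)b_\xi(s)\overline{b}_\xi(r);
\]
substituting $b_\xi=\chi(\xi)\widehat w(t,\cdot)$ produces a cubic term $|\chi(\xi)|^2\chi(\xi)\sum\widehat w\,\widehat w\,\overline{\widehat w}$, which equals $\chi(\xi)$ times the torus-NLS nonlinearity only when $|\chi(\xi)|\in\{0,1\}$. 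A genuine smooth bump fails this. The cure is trivial: take $\chi=\mathbf 1_{[-1,1]}$. Since $\chi$ then has compact support in $\xi$, the weight $(1+|\xi|^2)^k$ is bounded on $\operatorname{supp}\chi$ and your factorization $\|v(t)\|_{H^k}\sim\|\chi\|_{L^2_\xi}\|w(t)\|_{h^k}$ (together with the $L^2$ identity) survives unchanged; the rest of your argument goes through. The paper proceeds slightly differently: it takes a smooth plateau $\varphi$ with $\varphi\equiv 1$ on $[-1,1]$ and invokes the time-rescaled formula $b_\xi(t,p)=\varphi(\xi)\,\widehat w(\varphi(\xi)^2 t,p)$ (borrowed from the resonant-system computation in \cite{HPTV15}); on the plateau $\{|\xi|\le 1\}$ this collapses to $b_\xi(t)=\widehat w(t)$, which already delivers the required lower bound on $\|v(T)\|_{H^k}$.
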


	\begin{proof}[Proof of Corollary \ref{GrowCor}]\
	We first observe that with $a_{\xi}(p):=\widehat{v}(\xi, p)$, if $v$ is a solution to  the Hamiltonian system associated to \eqref{Rreso}, then $a$ satisfies the following equation:
		\begin{align*}
			i\partial_t a_{\xi}(p)= (|\xi|^2+ \lambda_{ p} )a_{\xi}(p) - \sum_{p=q+s-r} a_{\xi}( q) a_{\xi}( s) \overline{a}_{\xi}( r) .
		\end{align*}
The gauge transformation, $b_{\xi}(p):= e^{i|\xi|^2t}a_{\xi}(p)$, reduces the above equation to 
 		\begin{align*}
			i\partial_t b_{\xi}(p)= \lambda_{p} b_{\xi}(p) - \sum_{p=q+s-r} b_{\xi}( q) b_{\xi}( s) \overline{b}_{\xi}( r) .
		\end{align*}

	Let $u(\cdot): [0, \infty)  \to H^k(\mathbb{T}^2_{(\theta_1, \theta_2)})$ be a solution to the two-dimensional cubic NLS equation on the torus, $\mathbb{T}^2_{(\theta_1, \theta_2)}$, given by Theorem \ref{GrowThm} such that 
		\begin{align}
		\|u(t)\|_{L^2(\mathbb{T}^2_{(\theta_1, \theta_2)})} \leq \mathcal{C}^{-\eta}.
		\end{align}
		Then $w(t, x_1, ..., x_d):=u(t, x_1, x_2)$ solves the cubic NLS equation on $\mathbb{T}^d_{\theta}$ \eqref{eq TNLS} and $\|w\|_{h^k(\mathbb{T}^d_{\theta})} = \|v\|_{h^k(\mathbb{T}^2_{(\theta_1, \theta_2)})}$ for all $k\geq 0$.  Let $\varphi \in C^{\infty}_c(\mathbb{R})$ be a nonnegative, smooth bump satisfying $\varphi(\xi)= 1$ for $|\xi| \leq 1$ and $\varphi(\xi)=0$ for $|\xi|>2$. Now let
		\begin{align*}
			b_{\xi}(0, p):= \widehat{w}(0, p)\varphi(\xi).
		\end{align*}
Then the following identity holds:\footnote{This is the same computation discussed in Remark 4.2 of \cite{HPTV15}.}
		\begin{align*}
			b_{\xi}(t, p)= \widehat{w}(\varphi(\xi)^2t, p)\varphi(\xi) .
		\end{align*}
Finally, $\|v(t)\|_{H^k} \sim \|w(t)\|_{h^k}=\|u(t)\|_{h^k}$ for all $t \in [0, T]$ and $k \in [0, \infty)$ and the result follows from the estimates of Theorem \ref{GrowThm}.
\end{proof}

\subsection{Boundedness}\label{subsec.boundeddynamics}
Let $\theta \in \Theta_1$ as in Definition \ref{def.Theta}.

	For $M>0$ consider the following Hamiltonian related to the height-$1/M$ quasi-resonant nonlinearity.
	\begin{align}\label{eq ResHam}
		H_M(\psi)&= H_0(\psi)+N_M(\psi)\\
				&=\tfrac{1}{2} \int_{\xi \in \mathbb{R}}  \sum_{p \in \mathbb{Z}^d}(|\xi|^2+\lambda_{ p} )|\widehat{\psi}(\xi, p)|^2+ \tfrac{1}{4} \int_{\xi \in \mathbb{R}} \sum_{|\omega| < \frac{1}{M}}\sum_{(p, q, r, s) \in \Gamma_{\omega}} \widehat{\psi}(\xi, q) \widehat{\psi}(\xi, s) \overline{\widehat{\psi}}(\xi, r) \overline{\widehat{\psi}}(\xi, p) .
	\end{align}
Since $\theta \in \Theta_1$, there exists $\gamma > d$ such that the following Diophantine condition on $\theta$ holds:
    \begin{align*}
        \left| \sum_{i=1}^d \theta_i^2 n_i \right| \geq C_{\theta}\|(n_1, ..., n_d)\|^{-\gamma}
    \end{align*}
    implies that there exists a constant $C_d$ such that if $\theta \in \Theta_1$ and $\tfrac{1}{M}>|\lambda_p - \lambda_q + \lambda_r - \lambda_s| \neq 0$, then 
    \begin{align}
        \max ( \|p\|, \|r\|, \|q\|, \|s\|) \geq C_{d, \theta}M^{\frac{1}{2\gamma}}.
    \end{align}
 Moreover, if  $\max ( \|p\|, \|r\|, \|q\|, \|s\|) \leq C_{d, \theta}M^{\frac{1}{2\gamma}}$ and $\tfrac{1}{M}>|\lambda_p - \lambda_q + \lambda_r - \lambda_s|$, then  
    \begin{align}
        \lambda_p - \lambda_q + \lambda_r - \lambda_s=0.
    \end{align}
    Let $v_{p, \xi}:= \widehat{\psi}(\xi, p)$. The  system of equations associated to $H_M$ is then
	\begin{align}\label{gaugeNLS}
		i\tfrac{d}{dt}v_{ \xi, p} =(|\xi|^2+\lambda_{ p})v_{\xi, p} + \sum_{(p, q, r, s) \in \Gamma_0^K} v_{q, \xi}v_{\xi, s}\overline{v}_{\xi, r}+  \sum_{|\omega|< \frac{1}{M}}\sum_{ (p, q, r, s) \in \Gamma_{\omega} \atop \max(\|p\|, \|q\|, \|r\|, \|s\|)>K} v_{\xi, q}v_{\xi, s}\overline{v}_{\xi, r}.
	\end{align}
	where 
	\begin{align} \label{eq K}
	    K = C_{d, \theta}M^{\frac{1}{2\gamma}}.
	\end{align}
A gauge transformation allows us to reduce this system to 
	\begin{align}\label{truncnls}
		i\frac{d}{dt}u_{\xi, p}= \sum_{(p, q, r, s) \in \Gamma_0^K} u_{\xi, q}u_{\xi, s}\overline{u}_{\xi, r}+  \sum_{|\omega|< \frac{1}{M}}\sum_{ (p, q, r, s) \in \Gamma_{\omega} \atop \max(\|p\|, \|q\|, \|r\|, \|s\|)>K} e^{it \omega} u_{\xi, q}u_{\xi, s}\overline{u}_{\xi, r} .
	\end{align}
	
We will first show that a solution to \eqref{truncnls} is  analytic as a function from $[0, T]$ to $H^k$ for $k>d/2$, where $T$ is only required to be positive.

Analyticity will essentially follow from the fact that we are assuming enough regularity to imply that $h^k$ is an algebra.  The full proof (for the $d=2$ case) appears in \cite{SW20} and \cite{CF12}.

\begin{lemma}\label{lemmaT}
Let $k> d/2$ and $u(0) = \{u_{\xi, p}(0)\}_{(\xi, p) \in \mathbb{R} \times\mathbb{Z}^d} \in H^k( \mathbb{R} \times\mathbb{Z}^d)$. There exists $T>0$ and a unique analytic in time solution $\{u_{\xi, p}\} : [0,T] \rightarrow H^k$ to \eqref{truncnls}. Moreover, there exist constants $C$ and $R$ such that for all $n \in \mathbb{N}$ and $\tau \leq T$,
	\begin{align} \label{expbound}
		\left|   \frac{d^nu_{\xi, p}}{(dt)^n}(\tau) \right| \leq CR^nn!.
	\end{align}
Moreover, $T$ can be taken to be $T = C_{d,k}|u(0)|^{-2}_{k}$, for some absolute constant $C_{d, k}>0$.
\end{lemma}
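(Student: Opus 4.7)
My plan is to mirror the $d=2$ argument of \cite{SW20}: combine a contraction-mapping argument for local well-posedness with the Cauchy majorant method for analyticity, exploiting the fact that for $k > d/2$ the space $h^k(\mathbb{Z}^d)$ is a Banach algebra and that the nonlinearity in \eqref{truncnls} acts pointwise in the parameter $\xi$.

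First I would establish the trilinear continuity estimate
\begin{align*}
    |\mathcal{N}^t(u, v, w)|_k \lesssim |u|_k |v|_k |w|_k,
\end{align*}
where $\mathcal{N}^t$ denotes the right-hand side of \eqref{truncnls}. Since $|e^{it\omega}| = 1$ the oscillatory phases are inert, and the bound follows from a pointwise-in-$\xi$ application of the $h^k(\mathbb{Z}^d)$-algebra property together with a Leibniz-type distribution of the $(1+|\xi|^2)^{k/2}$ weight across the three factors. Next, rewrite \eqref{truncnls} in integral form
\begin{align*}
    u(t) = u(0) - i \int_0^t \mathcal{N}^s(u(s), u(s), u(s))\, ds
\end{align*}
and run a standard Picard iteration on the ball of radius $2|u(0)|_k$ in $C([0,T]; H^k)$. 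Contraction holds provided $T \cdot C_{d,k} \cdot |u(0)|_k^2 \leq 1/2$, yielding the stated lifetime $T = C_{d,k}|u(0)|_k^{-2}$ and a unique solution with $\sup_{\tau \leq T}|u(\tau)|_k \leq 2|u(0)|_k$.

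For the analyticity estimate \eqref{expbound}, set $e_n := \sup_{\tau \leq T} |\partial_t^n u(\tau)|_k / n!$. Differentiating \eqref{truncnls} $n$ times via the Leibniz rule, the multinomial-coefficient identity $\binom{n-1}{i_1, i_2, i_3}\, i_1! \, i_2! \, i_3! = (n-1)!$ combined with the trilinear estimate produces the recursion
\begin{align*}
    (n+1)\, e_{n+1} \leq C_k \sum_{i_1+i_2+i_3=n} e_{i_1} e_{i_2} e_{i_3},
\end{align*}
where the contributions from differentiating $e^{it\omega}$ are absorbed into $C_k$, using that $|\omega| < 1/M$ is uniformly bounded in the truncated summation. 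This is precisely the recursion satisfied by the Taylor coefficients of the scalar ODE $g' = C_k g^3$ with $g(0) = 2|u(0)|_k$, whose explicit solution $g(t) = g(0)(1 - 2 C_k g(0)^2 t)^{-1/2}$ is analytic on the disk $|t| < (2 C_k g(0)^2)^{-1}$. Reading off its Taylor coefficients and applying $\binom{2n}{n} \leq 4^n$ yields $e_n \lesssim |u(0)|_k R^n$ with $R \sim |u(0)|_k^2$, giving \eqref{expbound}, and componentwise $|\partial_t^n u_{\xi, p}(\tau)| \leq |\partial_t^n u(\tau)|_k$ completes the pointwise statement.

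The principal obstacle is the analyticity step. A naive inductive ansatz of the form $|\partial_t^n u|_k \leq CR^n n!$ does not close because the combinatorial explosion from the multinomial expansion of $\partial_t^{n-1}(u \bar u u)$ produces a spurious polynomial-in-$n$ loss at each induction step; one must instead identify the recursion with that of a scalar Cauchy majorant, whose Taylor coefficients involve central binomials $\binom{2n}{n}$ and so encode the correct geometric-in-$n$ growth rate. A secondary technicality concerns the trilinear estimate in the mixed norm $|\cdot|_k \sim \|\cdot\|_{H^k(\mathbb{R}\times \mathbb{T}^d)}$: the pointwise-in-$\xi$ algebra structure of $h^k(\mathbb{Z}^d)$ gives an immediate $L^\infty_\xi h^k$ control, and a fractional-Leibniz redistribution of the $\xi$-weight is required to pass to the full Sobolev norm.
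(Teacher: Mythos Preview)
Your overall strategy---contraction mapping via a trilinear estimate, then Cauchy majorant for analyticity---is the same one the paper points to (it simply cites \cite{SW20} and \cite{CF12} for the $d=2$ case). However, your claimed trilinear bound $|\mathcal{N}^t(u,v,w)|_k \lesssim |u|_k|v|_k|w|_k$ is \emph{false} as stated, and your suggested fix (``fractional-Leibniz redistribution of the $\xi$-weight'') does not apply. The nonlinearity in \eqref{truncnls} is \emph{pointwise} in $\xi$, not a product in the physical variable $x$; there is no Leibniz rule to invoke on the $\xi$-side. Concretely, take $u_{\xi,p}=M\,\mathbf{1}_{[0,M^{-2}]}(\xi)\,\delta_{p=0}$: then $|u|_k\sim 1$ but $\mathcal{N}_{\xi,p}=M^3\mathbf{1}_{[0,M^{-2}]}(\xi)\delta_{p=0}$ gives $|\mathcal{N}|_k\sim M^2\to\infty$. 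The $|\cdot|_k$ norm simply does not control $\sup_\xi\|u(\xi,\cdot)\|_{h^k_p}$, which is what a pointwise-in-$\xi$ cubic nonlinearity needs.

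The clean repair is to run the entire argument \emph{fiberwise} in $\xi$: for each fixed $\xi$ the system is an ODE in $h^k_p(\mathbb{Z}^d)$, the algebra estimate $\|\mathcal{N}(\xi)\|_{h^k_p}\lesssim\|u(\xi)\|_{h^k_p}^3$ holds, and the contraction/majorant argument of \cite{SW20} gives a local time $T_\xi\sim\|u(0,\xi,\cdot)\|_{h^k_p}^{-2}$ together with the pointwise bound \eqref{expbound}. This is exactly what \cite{SW20} proves (there is no $\mathbb{R}$ factor there). To get a $\xi$-uniform lifespan one must supplement $|\cdot|_k$ with control of $\sup_\xi\|u(\xi,\cdot)\|_{h^k_p}$---for instance via the $Z_d$-norm or, in the paper's applications, via the $S$-norm (which controls $xF$ and hence forbids the $\xi$-concentration of the counterexample). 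A secondary point: your recursion for $e_n$ drops the contribution from differentiating $e^{it\omega}$; the correct majorant ODE is $g'=C_k e^{t/M}g^3$ rather than $g'=C_k g^3$, though this is a routine modification.
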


Define the $K$-box in $\mathbb{Z}^d$ by
	\begin{align*}
		B_K = \left\{ p \in \mathbb{Z}^d~:~ \max(|p_1|,...,|p_d|)\leq K \right\}.
	\end{align*}
We state the following lemma that appears in \cite{SW20}. This lemma provides a description of the stability of the Fourier support of frequency-localized solutions to the resonant equation associated to the Hamiltonian, $H_M$, defined in \eqref{eq ResHam}.
\begin{lemma}\label{nocasc}
	Let $K >0$ and assume $ u_{\xi, p}(0)=0$ for $ p \not\in B_{K/3}$ and let $\{u_{\xi, p}\}: [0,T] \rightarrow H^k$ be a solution to \eqref{truncnls}.  If $p \in  \mathbb{Z}^d \setminus B_{K/3}$ then for any $n \in \mathbb{Z}_+$,
	\begin{align*}
		  \frac{d^nu_{\xi, p}}{(dt)^n}(0)=0.
	\end{align*}
\end{lemma}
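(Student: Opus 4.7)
The plan is to induct on $n \in \mathbb{Z}_+$. The base case $n=0$ is immediate from the hypothesis $u_{\xi,p}(0)=0$ for $p \notin B_{K/3}$. For the inductive step, assuming $\tfrac{d^k u_{\xi,p}}{(dt)^k}(0) = 0$ for all $k \leq n-1$ and all $p \notin B_{K/3}$, I would differentiate \eqref{truncnls} a total of $n-1$ times in $t$, apply the general Leibniz rule to each cubic monomial on the right-hand side, and evaluate at $t=0$. This expresses $\tfrac{d^n u_{\xi,p}}{(dt)^n}(0)$ as a finite linear combination (with multinomial coefficients and polynomial factors in $\omega$) over the quartets $(p,q,r,s)$ appearing in \eqref{truncnls}, of products of time derivatives of $u_{\xi,q}$, $u_{\xi,s}$, $\overline{u}_{\xi,r}$, each of order at most $n-1$.

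The crux is then the following combinatorial claim: for every quartet $(p,q,r,s)$ contributing to either sum in \eqref{truncnls} with $p \notin B_{K/3}$, at least one of $q$, $r$, $s$ must also lie outside $B_{K/3}$. Granting this, the inductive hypothesis forces the corresponding factor to vanish at $t=0$, killing every term in the Leibniz expansion and yielding $\tfrac{d^n u_{\xi,p}}{(dt)^n}(0) = 0$. For quartets from the first sum, i.e.\ $(p,q,r,s) \in \Gamma_0^K$, I would pick a coordinate $i$ with $|p_i| > K/3$ (which exists since $p \notin B_{K/3}$) and invoke the coordinate-wise characterization of resonances given by Lemma \ref{char}: for that coordinate either $q_i = p_i$ or $s_i = p_i$, which places $q$ or $s$ outside $B_{K/3}$. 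For quartets from the second sum, I would argue by contradiction: if all of $q, r, s$ were in $B_{K/3}$ then momentum conservation $p = q + s - r$ would give $|p_i| \leq |q_i| + |s_i| + |r_i| \leq K$ in every coordinate, so $p \in B_K$, contradicting the defining constraint $\max(\|p\|, \|q\|, \|r\|, \|s\|) > K$ of the second sum.

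The main point requiring care is not the induction itself, which is essentially bookkeeping once the combinatorial claim is established, but aligning the norm conventions between the cutoffs: $B_K$ is defined by a max-norm cutoff while the second sum's constraint uses the Euclidean norm. The momentum-conservation step in the second case is clean only after reconciling these (for instance, by rescaling $K/3$ by a dimensional factor such as $1/\sqrt{d}$, or by reinterpreting $\|\cdot\|$ in \eqref{truncnls} as the max norm, as is natural in light of the Diophantine argument preceding \eqref{eq K}). No further ingredients beyond Lemma \ref{char} and conservation of momentum are needed, and the proof is identical in spirit to the $d=2$ version found in \cite{SW20}.
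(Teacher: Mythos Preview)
Your induction scheme and the treatment of the first sum via Lemma~\ref{char} are exactly the argument the paper defers to \cite{SW20} for; this is the intended proof and nothing essential is missing there.

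Your handling of the second sum, however, does not close as written, and you correctly flag the reason. If $q,r,s\in B_{K/3}$ and $p=q-r+s$, momentum conservation only gives $p\in B_K$ in the \emph{max} norm, which does not contradict the Euclidean constraint $\max(\|p\|,\|q\|,\|r\|,\|s\|)>K$ once $d\geq 2$. Your proposed fixes (rescale $K/3$ by $\sqrt d$, or reinterpret $\|\cdot\|$ as the sup norm) are workable, but there is a cleaner route that avoids touching the statement: split the second sum according to whether $\omega=0$ or $\omega\neq 0$. For $\omega=0$ the quartet is genuinely resonant and Lemma~\ref{char} applies verbatim, forcing $q$ or $s$ out of $B_{K/3}$. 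For $\omega\neq 0$ with $q,r,s\in B_{K/3}$ and hence $p\in B_K$, the Diophantine condition gives
\[
|\omega|\;\geq\; C_\theta\,\|n\|^{-\gamma},\qquad n_i=p_i^2-q_i^2+r_i^2-s_i^2,\ \ |n_i|\leq K^2+3(K/3)^2=\tfrac{4}{3}K^2,
\]
so $\|n\|\leq \tfrac{4}{3}\sqrt{d}\,K^2$ and, unwinding the definition $K=C_{d,\theta}M^{1/(2\gamma)}$ from the paragraph preceding \eqref{eq K}, one obtains $|\omega|\geq 3^{\gamma}/M>1/M$, contradicting the summation constraint $|\omega|<1/M$. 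This is where the factor $3$ in $K/3$ actually earns its keep, and it removes any dependence on reconciling the two norms.
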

The proof of this lemma (in the two-dimensional case) appears in \cite{SW20}.

Lemma \ref{nocasc} leads directly to the following corollary by Taylor expansion.
\begin{corollary}\label{nogrowth}
		Let $K >0$ and assume $ u_{\xi, p}(0)=0$ for $p  \not\in B_{K/3}$ and let $\{u_{\xi, p}\}: [0,T] \to H^k$ be an analytic solution to \eqref{truncnls}.  If $p \in  \mathbb{Z}^d \setminus B_{K/3}$, then 
	\begin{align*}
		  u_{\xi, p}(t)=0
	\end{align*}
		for $t \in [0, T]$.
	\end{corollary}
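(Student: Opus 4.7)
The plan is to combine the analyticity bounds of Lemma \ref{lemmaT} with the vanishing of all time derivatives at $t=0$ from Lemma \ref{nocasc}, and then upgrade the resulting local statement to a statement on all of $[0,T]$ by a standard analytic continuation / bootstrap argument.

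First I would record that, by Lemma \ref{lemmaT}, the solution $\{u_{\xi,p}\}$ is real-analytic as a map from $[0,T]$ into $H^k$, with the uniform bound
\begin{align}
\left| \frac{d^n u_{\xi,p}}{(dt)^n}(\tau) \right| \leq C R^n n! \quad \text{for all } \tau \in [0,T],\ n \in \mathbb{N}.
\end{align}
In particular, the Taylor series of $u_{\xi,p}$ centered at any point $\tau_0 \in [0,T]$ has radius of convergence at least $1/R$ and represents $u_{\xi,p}$ on $[\tau_0 - 1/R,\ \tau_0 + 1/R] \cap [0,T]$. The value $1/R$ is uniform in $\tau_0$ and in $p$.

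Next, I would fix $p \in \mathbb{Z}^d \setminus B_{K/3}$ and invoke Lemma \ref{nocasc}, which gives $\frac{d^n u_{\xi,p}}{(dt)^n}(0) = 0$ for every $n \in \mathbb{Z}_+$. Expanding $u_{\xi,p}$ in its Taylor series about $t=0$ then shows $u_{\xi,p}(t) \equiv 0$ on the interval $[0,\ 1/R)$. To extend this to all of $[0,T]$, I would run the following bootstrap: let
\begin{align}
S := \left\{ \tau \in [0,T] : u_{\xi,p}(t) = 0 \text{ for all } t \in [0,\tau] \text{ and all } p \in \mathbb{Z}^d \setminus B_{K/3} \right\}.
\end{align}
The set $S$ is nonempty (it contains $[0,1/R) \cap [0,T]$), closed in $[0,T]$ by continuity, and I claim it is also open. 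Indeed, if $\tau_0 \in S$, then $u_{\xi,p}(\tau_0) = 0$ for every $p \notin B_{K/3}$, so the Fourier profile of the solution at time $\tau_0$ satisfies exactly the same localization hypothesis as at time $0$. Applying Lemma \ref{nocasc} with $\tau_0$ as the new initial time shows that all time derivatives of $u_{\xi,p}$ at $\tau_0$ vanish for $p \notin B_{K/3}$, and then the uniform Taylor expansion above forces $u_{\xi,p} \equiv 0$ on $[\tau_0,\ \tau_0 + 1/R) \cap [0,T]$. Hence $S$ is open, and by connectedness $S = [0,T]$, which is the claim.

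The only delicate point is the translation-invariance of the argument used to establish openness: one needs to know that Lemma \ref{nocasc} can be reapplied at arbitrary $\tau_0 \in S$, which is legitimate because the equation \eqref{truncnls} is autonomous (the phases $e^{it\omega}$ merely shift to $e^{i(t-\tau_0)\omega} e^{i\tau_0 \omega}$, which does not affect the combinatorial/arithmetic vanishing argument underlying Lemma \ref{nocasc}), and because the uniform derivative bound from Lemma \ref{lemmaT} yields a Taylor radius $1/R$ independent of the basepoint. I expect this verification to be the only nontrivial point; everything else is a routine application of the identity theorem for analytic functions on a connected interval.
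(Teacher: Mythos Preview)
Your proof is correct and follows the same approach as the paper, which simply states that the corollary follows from Lemma \ref{nocasc} ``by Taylor expansion.'' One small simplification: the re-application of Lemma \ref{nocasc} at $\tau_0$ in your openness step is unnecessary, since once $u_{\xi,p}$ vanishes identically on $[0,\tau_0]$ all of its time derivatives at $\tau_0$ already vanish by continuity (take left-sided limits), so the Taylor expansion at $\tau_0$ gives the extension directly---this is just the standard identity theorem for real-analytic functions, and it sidesteps the time-translation issue you flagged as delicate.
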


Lemma \ref{nocasc} and Corollary \ref{nogrowth} are proven in \cite{SW20}, so the proofs of these results are not included here. The next step of the process is the following symmetry result:

\begin{proposition}\label{nocascprop}
	Let $K >0$ and assume $v_{\xi, p}^0=0$ for  $p \not\in B_{K/3}$. Let $\{v_{\xi, p}\}: [0,T] \to H^k$ be an analytic solution to  \eqref{gaugeNLS} with $v_{\xi, p}(0)=v^0_{\xi, p}$ such that for all $t \in [0, T]$
		\begin{align*}
			\frac{d}{dt} |v|^2_{k} =0,
		\end{align*}	
	where the $k$-norm is defined in Definition \ref{defn k}, and it is  equivalent to the Sobolev $H^k$-norm.

\end{proposition}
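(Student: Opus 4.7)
\textbf{Proof proposal for Proposition \ref{nocascprop}.}

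The plan is a standard ``resonant-quantity is conserved'' argument, but carried out for the weighted $k$-norm rather than the $H^k$-norm directly, with the key input being the coordinate-wise structure of resonances given by Lemma \ref{char}. First, I would gauge away the linear part of \eqref{gaugeNLS} by setting $u_{\xi,p}(t)=e^{i(|\xi|^2+\lambda_p)t}v_{\xi,p}(t)$, obtaining a solution of \eqref{truncnls} with the same initial data. Since $v_{\xi,p}(0)=0$ for $p\notin B_{K/3}$, the same holds for $u_{\xi,p}(0)$; Corollary \ref{nogrowth} then gives $u_{\xi,p}(t)=0$, and therefore $v_{\xi,p}(t)=0$, for all $p\in\mathbb{Z}^d\setminus B_{K/3}$ and all $t\in[0,T]$.

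With Fourier support confined to $B_{K/3}\subset B_K$, I would argue that the second (high-frequency, height-$\tfrac1M$) sum on the right-hand side of \eqref{gaugeNLS} plays no role: in any contributing quadruple $(p,q,r,s)$ one needs $v_{\xi,q},v_{\xi,r},v_{\xi,s}$ all nonzero, forcing $q,r,s\in B_{K/3}$, and the equation is being paired with $\bar v_{\xi,p}$ which forces $p\in B_{K/3}$ as well, so $\max(\|p\|,\|q\|,\|r\|,\|s\|)\leq K/3<K$ and the constraint $\max>K$ is never met. Consequently, in computing
\begin{align*}
\tfrac{d}{dt}|v|_k^2 = 2\,\mathrm{Re}\int_\xi\sum_p W_p(\xi)\,\bar v_{\xi,p}\,\partial_t v_{\xi,p},\qquad W_p(\xi):=\sum_{i=1}^d (1+|p_i|^2)^k+(1+|\xi|^2)^k,
\end{align*}
the linear $-i(|\xi|^2+\lambda_p)v_{\xi,p}$ contribution is purely imaginary (real part vanishes), and only the resonant sum survives. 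Writing the remaining piece as $-iJ$ with
\begin{align*}
J=\int_\xi\sum_{(p,q,r,s)\in\Gamma_0^K} W_p(\xi)\,\bar v_{\xi,p} v_{\xi,q} v_{\xi,s}\bar v_{\xi,r},
\end{align*}
the claim reduces to showing $J\in\mathbb{R}$.

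The heart of the argument is the coordinate-wise resonance description from Lemma \ref{char}: for $(p,q,r,s)\in\Gamma_0$ and each $i$, the unordered pair $\{p_i,r_i\}$ equals $\{q_i,s_i\}$, so $(1+|p_i|^2)^k+(1+|r_i|^2)^k=(1+|q_i|^2)^k+(1+|s_i|^2)^k$; summing over $i$ and noting the $\xi$-contribution is symmetric, I get the weight identity $W_p(\xi)+W_r(\xi)=W_q(\xi)+W_s(\xi)$ on $\Gamma_0$. Using the involution $(p,q,r,s)\mapsto(r,s,p,q)$ (which preserves $\Gamma_0^K$ and leaves the monomial $\bar v_p v_q v_s \bar v_r$ invariant), I rewrite $2J=\int_\xi\sum_{\Gamma_0^K}(W_p+W_r)\bar v_p v_q v_s\bar v_r$, which by the identity equals $\int_\xi\sum_{\Gamma_0^K}(W_q+W_s)\bar v_p v_q v_s\bar v_r$. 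Conjugating and then applying the second involution $(p,q,r,s)\mapsto(q,p,s,r)$ (also preserving $\Gamma_0^K$) returns exactly $2J$, hence $\bar J=J$. Therefore $\tfrac{d}{dt}|v|_k^2=2\,\mathrm{Re}(-iJ)=-2\,\mathrm{Im}(J)=0$.

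The only subtle point, and the one I would expect to require the most care, is the first step of confining the nonresonant (second) sum to quadruples inside $B_{K/3}$; all remaining manipulations are the symmetric ``energy-method'' bookkeeping, and the new ingredient is simply that Lemma \ref{char} upgrades the usual $H^k$-symmetry to a symmetry of the mixed weight $W_p(\xi)$, which is exactly what the $k$-norm of Definition \ref{defn k} is tailored to exploit.
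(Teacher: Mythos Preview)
Your proof is correct, and the overall scaffolding---gauge to \eqref{truncnls}, invoke Corollary \ref{nogrowth} to kill all modes outside $B_{K/3}$, observe that the linear term and the $\max>K$ sum drop out, and reduce to showing the resonant quartic form is real---matches the paper's Appendix \ref{app.nocasc} exactly. The difference is in how reality of $J$ is established. The paper splits $\Gamma_0^{K/3}$ into $2^d$ pieces $\Gamma_0^{K/3}(E)$ indexed by subsets $E\subset\{1,\dots,d\}$ (recording, coordinate by coordinate, which of the two alternatives in Lemma \ref{char} occurs), and then factors the sum on each piece as $z\bar z$, making reality manifest. You instead distill Lemma \ref{char} into the single weight identity $W_p(\xi)+W_r(\xi)=W_q(\xi)+W_s(\xi)$ on $\Gamma_0$ and finish with two involutions of $\Gamma_0^K$. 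Your route is shorter and avoids the $E$-bookkeeping; the paper's explicit factorization has the minor advantage of isolating Claim \ref{claim constant} (the pointwise-in-$\xi$ statement) in a self-contained form that is quoted verbatim in the proof of Proposition \ref{prop 6.2}, though your identity proves that claim equally well.
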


The proof is very similar to that which appears in \cite{HPSW21}, so the details are relegated to Appendix \ref{app.nocasc}.

\begin{theorem}\label{Thm.constant}
    Let $K >0$,  assume $v_{\xi, p}^0=0$ for  $p \not\in B_{K/3}$, and let $T>0$. There exists
		$\{v_{\xi, p}\}: [0,T] \rightarrow H^k$ which is an unique solution to \eqref{gaugeNLS} with $v_{\xi, p}(0)=v^0_{\xi, p}$ such that for all $t \in [0, T]$
		\begin{align*}
			 |v(t)|^2_{k} =|v(0)|^2_{k},
		\end{align*}	
for all $t \in [0, T]$.

\end{theorem}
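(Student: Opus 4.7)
The plan is to combine the local existence of Lemma \ref{lemmaT}, the frequency-support invariance of Corollary \ref{nogrowth}, and the norm-conservation of Proposition \ref{nocascprop}, and then bootstrap to cover an arbitrary interval $[0,T]$. The key observation is that the constancy of $|v(t)|_k$ together with the fact that the local existence time in Lemma \ref{lemmaT} depends only on the $k$-norm of the initial data yields a uniform step-size for iteration, so finitely many restarts suffice to cover $[0,T]$.

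First I would invoke Lemma \ref{lemmaT} applied to \eqref{truncnls} with initial datum $v^0_{\xi,p}$ (viewed as initial datum for the gauge-transformed system) to obtain a unique analytic solution $\{u_{\xi,p}\}:[0,T_0]\to H^k$ on an interval of length $T_0 = C_{d,k}|v^0|_k^{-2}$. Since $v^0$ is supported in $B_{K/3}$, Corollary \ref{nogrowth} forces $u_{\xi,p}(t)=0$ for all $p\notin B_{K/3}$ and $t\in[0,T_0]$. Undoing the gauge transformation produces a solution $\{v_{\xi,p}\}:[0,T_0]\to H^k$ to \eqref{gaugeNLS}; since the gauge factor is a unit-modulus phase depending only on $(t,\xi,p)$, this transformation preserves both the Fourier support in $p$ and the pointwise-in-$t$ value of $|\cdot|_k$. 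Hence $v$ is analytic on $[0,T_0]$ and Fourier-supported in $B_{K/3}$, so Proposition \ref{nocascprop} applies and yields $\tfrac{d}{dt}|v|_k^2\equiv 0$ on $[0,T_0]$. Consequently $|v(T_0)|_k=|v^0|_k$.

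Next I would iterate. The solution $v(T_0)$ is still Fourier-supported in $B_{K/3}$ and has the same $k$-norm as $v^0$, so repeating the argument above (with $v(T_0)$ as new initial datum, and absorbing the extra $e^{iT_0\omega}$ phase factors in \eqref{truncnls} into the nonlinear coefficients, which does not affect Lemma \ref{lemmaT}) yields an extension to $[T_0,2T_0]$ on which the $k$-norm is again constant. After at most $\lceil T/T_0\rceil$ such restarts, the solution is extended to $[0,T]$ with $|v(t)|_k \equiv |v(0)|_k$. Uniqueness follows from the uniqueness part of Lemma \ref{lemmaT}: for each fixed $\xi$, the system is an ODE in the Banach algebra $h^k(\mathbb{Z}^d)$ with a locally Lipschitz nonlinearity, and uniqueness transfers to \eqref{gaugeNLS} via the gauge transformation.

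The main obstacle I anticipate is merely bookkeeping rather than any substantial analytic difficulty: one must verify that the iteration is legitimate, i.e.\ that (a) the local existence time truly depends only on $|v(0)|_k$ and not on any other feature of the initial data (so that the conserved $k$-norm guarantees a uniform step-size), and (b) that restarting at $t=T_0$ does not degrade the frequency-support or analyticity hypotheses needed to reapply Proposition \ref{nocascprop}. Both are immediate from the statements of Lemma \ref{lemmaT} and Corollary \ref{nogrowth} once the time-shift issue for the phases $e^{it\omega}$ is noted, since these phases only modify the nonlinear coefficients by unit-modulus factors and therefore do not alter the local-well-posedness argument.
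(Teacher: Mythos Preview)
Your proposal is correct and follows essentially the same approach as the paper: the paper's proof (given as a remark immediately after the theorem) simply observes that the local existence time from Lemma \ref{lemmaT} depends only on the $k$-norm, that Proposition \ref{nocascprop} keeps this norm constant on each interval of analyticity, and then iterates with a uniform step-size to cover $[0,T]$. You have filled in the natural bookkeeping details (the gauge transformation between \eqref{gaugeNLS} and \eqref{truncnls}, the preservation of the $B_{K/3}$ support via Corollary \ref{nogrowth}, and the handling of the phase shift upon restarting) that the paper's sketch leaves implicit.
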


\begin{remark}[Proof of Theorem \ref{Thm.constant}]
    Theorem \ref{Thm.constant} follow from first observing that, by Lemma \ref{lemmaT}, the time of analyticity, depends on the $k$-norm of the initial data. Then Proposition \ref{nocascprop} implies that on the analytic times of existence the $k$-norm is constant. Therefore at the end of each interval of analyticity, one can start a new interval of the same length. Therefore, over any fixed time interval, $[0, T]$, the constancy of the $k$-norm given by the conclusion of Proposition \ref{nocascprop} holds.
\end{remark}

\section{Asymptotic Behavior}\label{section.asymptotic}

In this section, we will use the theory of modified scattering to relate the behavior, demonstrated in Section \ref{sec.dynamics}, of solutions to  the effective systems to solutions of the full cubic NLS system \eqref{eq NLS} on $\mathbb{R} \times \mathbb{T}_{\theta}^d$. We begin with the main lemma which provides asymptotic stability between solutions of NLS and solutions to the effective equation. 

\begin{lemma}\label{asymptotic}
	Let $M\geq 0$, $0<\varepsilon_2< \varepsilon_1< 1$ and let $U(t)$ be a solution to \eqref{eq NLS}  on the time interval $[2^n, 2^{n+1})$ and let $G_n: [2^n, 2^{n+1}) \to H^N$ be a solution to the equation
	\begin{align}\label{eq effect}
		i\partial_t G_n = \frac{\pi}{t} \mathcal{R}^t_{M}[G_n, G_n, G_n] . 
	\end{align}

If $\|U\|_{X_T^+} \leq \varepsilon_1$ for $T\geq 2^{n+1}$ and $\|e^{-i2^n \Delta}U(2^n)- G_n(2^n)\|_{H^N} \leq \varepsilon_2$, then  for $t \in [2^n, 2^{n+1})$	
\begin{align*}
		\|e^{-it \Delta}U(t)- G_n(t)\|_{H^N} \lesssim \varepsilon_2  + \varepsilon_1^3 2^{-2 n \delta}  + \varepsilon_1^2(\varepsilon_2 + \varepsilon_1^3 2^{-2 n \delta}) 2^{n\delta}.
	\end{align*}
The above inequality holds uniformly in $M\geq 0$.
\end{lemma}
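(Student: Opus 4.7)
Write $F(t) := e^{-it\Delta_{\R \times \T_\theta^d}} U(t)$, which solves the gauged equation \eqref{eq newNLS}, and set $W(t) := F(t) - G_n(t)$. By trilinearity of $\mathcal{R}_M^t$, $W$ satisfies
\begin{align*}
i\partial_t W = \parenthese{\mathcal{N}^t - \tfrac{\pi}{t}\mathcal{R}_M^t}[F,F,F] + \tfrac{\pi}{t}\parenthese{\mathcal{R}_M^t[W,F,F] + \mathcal{R}_M^t[G_n, W, F] + \mathcal{R}_M^t[G_n, G_n, W]}.
\end{align*}
Integrating this Duhamel equation from $2^n$ to $t \in [2^n, 2^{n+1})$, the plan is to control the non-effective term by Proposition \ref{prop 3.1}, handle the remaining trilinear differences by a direct $H^N$-estimate for $\mathcal{R}_M^s$, and close with a Gronwall argument.

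For the non-effective term, I would invoke the sharper inequality of Proposition \ref{prop 3.1}. Taking $T = 2^{n+1}$ and a cutoff $q_T$ essentially equal to $\mathbf{1}_{[2^n,t]}$ (admissible in the sense of Definition \ref{defn cutoff}), together with $\norm{F}_{X_T^+} \leq \varepsilon_1$ and trilinear rescaling of the bound $\norm{\cdot}_S \lesssim T^{-2\delta}$, yields
\begin{align*}
\norm{\int_{2^n}^{t}\parenthese{\mathcal{N}^s - \tfrac{\pi}{s}\mathcal{R}_M^s}[F,F,F] \, ds}_{H^N} \lesssim \varepsilon_1^3\, 2^{-2n\delta},
\end{align*}
with the key feature that the constant is uniform in $M\geq 0$ (which is exactly how Proposition \ref{prop 3.1} is stated).

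For the trilinear perturbations, I would establish the pointwise-in-$\xi$ estimate
\begin{align*}
\norm{\mathcal{R}_M^s[A,B,C]}_{H^N} \lesssim \norm{A}_{H^N}\norm{B}_{Z_d}\norm{C}_{Z_d}
\end{align*}
(and its permutations) by observing that
\begin{align*}
\abs{\mathcal{F}\mathcal{R}_M^s[A,B,C](\xi, p)} \leq \sum_{(p,q,r,s')\in \mathcal{M}} \abs{\widehat{A}_q(\xi)}\,\abs{\widehat{B}_r(\xi)}\,\abs{\widehat{C}_{s'}(\xi)}
\end{align*}
since the $\abs{\omega}<1/M$ cutoff and the phase only decrease the modulus. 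The sum is then a convolution on $\Z^d$; combined with the algebra bound $\norm{uvw}_{h^N} \lesssim \norm{u}_{h^N}\norm{v}_{h^{d/2+}}\norm{w}_{h^{d/2+}}$ (valid since $N \gg d$) and $\sup_\xi \norm{\widehat{F}(\xi)}_{h^{d/2+}} \leq \norm{F}_{Z_d}$, the claim follows after $L^2_\xi$-integration. Placing $W$ in the $H^N$-slot of each of the three perturbation terms, and bootstrapping the control $\norm{G_n}_{Z_d} \leq \norm{F}_{Z_d} + \norm{W}_{Z_d} \lesssim \varepsilon_1 + \norm{W}_{H^N}$, one obtains
\begin{align*}
\norm{W(t)}_{H^N} \leq C\varepsilon_2 + C\varepsilon_1^3\, 2^{-2n\delta} + C\varepsilon_1^2 \int_{2^n}^{t} \frac{\norm{W(s)}_{H^N}}{s}\, ds,
\end{align*}
and Gronwall on $[2^n,2^{n+1})$ gives the desired bound (the factor $2^{n\delta}$ in the statement comfortably absorbs the iteration loss, so one could alternatively employ the cruder $\norm{F(s)}_S \lesssim \varepsilon_1 s^\delta$ in one slot).

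The principal obstacles are: (i) confirming the trilinear estimate for $\mathcal{R}_M^s$ with constant independent of $M$, which is clean because the quasi-resonant cutoff only shrinks the modulus of the nonlinearity; (ii) obtaining $Z_d$-control of $G_n$, for which the effective equation has no direct conservation law on the whole frequency space (only the restricted $k$-norm from Theorem \ref{Thm.constant}), forcing a bootstrap argument based on the comparison $G_n = F - W$; and (iii) reconciling the sharp endpoint time $t$ with the cutoff structure of Definition \ref{defn cutoff}, which is permitted by the flexibility of $q_T$ allowing characteristic functions.
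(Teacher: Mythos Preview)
Your overall strategy matches the paper's: Duhamel, Proposition~\ref{prop 3.1} for the non-effective piece, a trilinear bound on $\mathcal{R}_M^s$ for the difference terms, and Gronwall. The gap is in your handling of obstacle (ii). The bootstrap $\|G_n\|_{Z_d} \leq \|F\|_{Z_d} + \|W\|_{Z_d} \lesssim \varepsilon_1 + \|W\|_{H^N}$ fails at the last inequality: the $Z_d$-norm involves a $\sup_\xi$ and is \emph{not} controlled by $H^N$ alone (one needs the weighted piece $\|x\,\cdot\|_{L^2}$ of the $S$-norm as well; cf.\ \eqref{eq 14}). Without $Z_d$-control of $G_n$ your trilinear estimate cannot place two factors in $Z_d$, and the Gronwall coefficient is not $\varepsilon_1^2$.

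The paper resolves this in two steps. It first asserts $\|G_n\|_{X_T^+}\lesssim \varepsilon_1$ outright (via Theorem~\ref{Thm.constant} and triangle inequality at $t=2^n$), which in particular gives $\|G_n\|_{Z_d}\lesssim \varepsilon_1$, and then runs a $Z_d$-Gronwall yielding $\|W\|_{Z_d}\lesssim \varepsilon_2+\varepsilon_1^3 2^{-2n\delta}$. Only afterward does it run an $S$-Gronwall (hence $H^N$), feeding the already-established $Z_d$-bound on $W$ back in; the extra term $\varepsilon_1^2(\varepsilon_2+\varepsilon_1^3 2^{-2n\delta})2^{n\delta}$ in the statement arises exactly from this second step, where one slot carries $\|U\|_S+\|G_n\|_S \lesssim \varepsilon_1 2^{n\delta}$. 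If you are willing to take $\|G_n\|_{Z_d}\lesssim \varepsilon_1$ as input (as the paper effectively does), your single $H^N$-Gronwall is fine and even yields the cleaner bound $\varepsilon_2+\varepsilon_1^3 2^{-2n\delta}$; otherwise you must insert the preliminary $Z_d$-step, where the bootstrap $\|G_n\|_{Z_d}\leq \varepsilon_1+\|W\|_{Z_d}$ \emph{does} close because both sides live in the same norm.
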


The proof of Lemma \ref{asymptotic} follows the same strategy as the equivalent stability argument in the proof of Theorem 6.1 in \cite{HPTV15}.
\begin{proof}[Proof of Lemma \ref{asymptotic}]
Since the $H^k$ norms of $G_n$ are bounded and $\varepsilon_2<\varepsilon_1$, triangle inequality implies that $\|G_n\|_{X^+_T}\lesssim \varepsilon_1$.
This proof consists of two Duhamel formula steps. First, observe that 
    \begin{align}
        i \partial_t (e^{-i t\Delta } U) = \mathcal{N}^t[e^{-i t\Delta } U,e^{-i t \Delta } U,e^{-i t \Delta } U] .
    \end{align}
    
    Then we have the following Duhamel Formula
\begin{align}
e^{-i t\Delta } U(t) & = e^{-it \Delta } U(2^n) - i \int_{2^n}^t \mathcal{N}^{\tau} [e^{-i \tau\Delta } U, e^{-i \tau\Delta } U, e^{-i \tau\Delta } U] \, d\tau ,\\
G_n (t) & = G_n(2^n) - i\int_{2^n}^t \frac{\pi}{\tau} \mathcal{R}_M^{\tau} [G_n ,G_n, G_n] \, d\tau .
\end{align}

From here we aim to establish an application of Gronwall's Lemma.
\begin{align}
\norm{e^{-i t \Delta } U (t) - G_n (t)}_{Z_d} & \leq \norm{e^{-i t\Delta } U(2^n) - G_n(2^n)}_{Z_d} + \norm{\int_{2^n}^t \mathcal{N}^{\tau} [e^{-i \tau\Delta } U] - \frac{\pi}{\tau} \mathcal{R}_M^{\tau} [G_n] \, d \tau}_{Z_d} \\
& \leq \varepsilon_2 + \norm{\int_{2^n}^t \mathcal{N}^{\tau} [e^{-i \tau\Delta } U] -\frac{\pi}{\tau} \mathcal{R}_M^{\tau}[e^{-i \tau\Delta } U] \, d \tau}_{Z_d}  + \norm{\int_{2^n}^t \frac{\pi}{\tau} \mathcal{R}_M^{\tau} [e^{-i \tau\Delta } U] - \frac{\pi}{\tau} \mathcal{R}_M^{\tau} [G_n ] \, d\tau}_{Z_d} .
\end{align}
where we used the convention $\mathcal{N}^{\tau} [F] =\mathcal{N}^{\tau} [F,F,F]$ and $\mathcal{R}_M^{\tau} [F] = \mathcal{R}_M^{\tau} [F,F,F]$.

We can bound the second term using the last estimate in the statement of Proposition \ref{prop 3.1}. The third term is bounded using a simple trilinear argument along with the smallness condition assumed for the $X_T^+$-norm of $U$ and a smallness condition  on the norm of $G_n$. The smallness of the norm of $G_n$ follows from the fact that $\|G_n(t)\|_{h^k} \leq C_d \|G_n(2^n)\|_{h^k} \lesssim \varepsilon$ (for $t\geq 2^n$) which follows from Theorem \ref{Thm.constant} and the triangle inequality. Therefore,
\begin{align}
\norm{e^{-i t \Delta } U (t) - G_n (t)}_{Z_d}& \lesssim \varepsilon_2 + \varepsilon_1^3 2^{-2 n \delta} + \int_{2^n}^t \varepsilon_1^2 \norm{e^{-i\tau \Delta} U - G_n}_{Z_d} \, \frac{d\tau }{\tau} .
\end{align}
    Gronwall's lemma then implies that 

\begin{align}
   \norm{e^{-i t \Delta } U (t) - G_n (t)}_{Z_d}& \lesssim (\varepsilon_2 + \varepsilon_1^3 2^{-2 n \delta}) e^{\varepsilon_1^2 \log(t/2^n)}\lesssim \varepsilon_2 + \varepsilon_1^3 2^{-2 n \delta}
\end{align}
since $t/2^n \leq 2.$

Similarly,
\begin{align}
\norm{e^{-i t \Delta } U (t) - G_n (t)}_{S} & \leq \norm{e^{-i t\Delta } U(2^n) - G_n(2^n)}_{S} + \norm{\int_{2^n}^t \mathcal{N}^{\tau} [e^{-i \tau\Delta } U] - \frac{\pi}{\tau} \mathcal{R}_M^{\tau} [G_n] \, d \tau}_{S} \\
& \leq \varepsilon_2 + \norm{\int_{2^n}^t \mathcal{N}^{\tau} [e^{-i \tau\Delta } U] -\frac{\pi}{\tau} \mathcal{R}_M^{\tau}[e^{-i \tau\Delta } U] \, d \tau}_{S} + \norm{\int_{2^n}^t \frac{\pi}{\tau} \mathcal{R}_M^{\tau} [e^{-i \tau\Delta } U] - \frac{\pi}{\tau} \mathcal{R}_M^{\tau} [G_n ] \, d\tau}_{S} ,
\end{align}
and
    \begin{align}
&\norm{e^{-i t \Delta } U (t) - G_n (t)}_{S}\\
& \hspace{.7cm}\lesssim \varepsilon_2 + \varepsilon_1^3 2^{-2 n \delta} + \int_{2^n}^t \varepsilon_1(\|U\|_S+ \|G_n\|_S) \norm{e^{-i\tau \Delta} U - G_n}_{Z_d} \, \frac{d\tau }{\tau} + \int_{2^n}^t \varepsilon_1^2 \norm{e^{-i\tau \Delta} U - G_n}_{S} \, \frac{d\tau }{\tau} \\
&\hspace{.7cm}\lesssim \varepsilon_2  + \varepsilon_1^3 2^{-2 n \delta}  + \varepsilon_1^2(\varepsilon_2 + \varepsilon_1^3 2^{-2 n \delta}) 2^{n\delta} +\int_{2^n}^t \varepsilon_1^2 \norm{e^{-i\tau \Delta} U - G_n}_{S} \, \frac{d\tau }{\tau}.
\end{align}

In the last step of the previous inequality, we are using that $(1+|t|)^{-\delta}(\|U\|_S+\|G\|_S) \leq \varepsilon$ by the definition of the $X_T^+$-norm. Again Gronwall implies 
\begin{align}
   \norm{e^{-i t \Delta } U (t) - G_n (t)}_{S}& \lesssim (\varepsilon_2  + \varepsilon_1^3 2^{-2 n \delta}  + \varepsilon_1^2(\varepsilon_2 + \varepsilon_1^3 2^{-2 n \delta}) 2^{n\delta}) 2^{\varepsilon_2^2 \log(t/2^n)}\\
   &\lesssim \varepsilon_2  + \varepsilon_1^3 2^{-2 n \delta}  + \varepsilon_1^2(\varepsilon_2 + \varepsilon_1^3 2^{-2 n \delta}) 2^{n\delta} .
\end{align}
\end{proof}

\begin{proposition}\label{prop 6.2}
There exists $\varepsilon > 0$ such that any data $U_0 \in S^+$ satisfying 
\begin{align}
\norm{U_0}_{S^+} \leq \varepsilon
\end{align}
generates a global solution $U(t)$ of \eqref{eq NLS}.  Moreover, for any $T > 0$, it holds that 
\begin{align}
\norm{F}_{X_T^+} \leq 2 \varepsilon ,
\end{align}
where $F(t) = e^{-it \Delta_{\R \times \T^d_{\theta}}} U(t)$. 
\end{proposition}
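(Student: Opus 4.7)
The plan is a continuity/bootstrap argument organized around the decomposition \eqref{eq Decomp} and the trilinear estimates of Proposition \ref{prop 3.1}. I would first establish local-in-time existence and uniqueness for the gauge-transformed equation \eqref{eq newNLS} in $S^+$ by a standard Duhamel contraction, using that $H^N$ is an algebra for $N\geq 10d$. This produces a maximal existence time $T_\ast\in(0,\infty]$ with $T\mapsto \|F\|_{X_T^+}$ continuous on $[0,T_\ast)$, and reduces the proposition to the closed a priori estimate: if $\|F\|_{X_T^+}\leq 2\varepsilon$ on $[0,T]\subset [0,T_\ast)$, then in fact $\|F\|_{X_T^+}\leq \varepsilon + C\varepsilon^3$.

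To close the a priori estimate I would improve each of the five contributions to $\|F\|_{X_T^+}$ separately. The pointwise-in-time derivative pieces $(1+|t|)^{1-3\delta}\|\partial_t F(t)\|_S$ and $(1+|t|)^{1-7\delta}\|\partial_t F(t)\|_{S^+}$ are obtained by substituting $\partial_t F = -i\mathcal{N}^t[F,F,F]$ and using a trilinear bound on $\mathcal{N}^t$ that exploits the $|t|^{-1/2}$ dispersion of $e^{it\partial_{xx}}$ together with the Leibniz property; combined with the bootstrap inputs $\|F(t)\|_S \lesssim (1+|t|)^\delta\varepsilon$ and $\|F(t)\|_{S^+} \lesssim (1+|t|)^{5\delta}\varepsilon$ these give the required decay. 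The integrated-in-time pieces $(1+|t|)^{-\delta}\|F(t)\|_S$ and $(1+|t|)^{-5\delta}\|F(t)\|_{S^+}$ are handled by inserting the dyadic partition $\{q_{T_k}\}_{T_k\lesssim t}$ of Definition \ref{defn cutoff} into the Duhamel formula and applying Proposition \ref{prop 3.1} dyadically to the $\mathcal{N}^\tau - \tfrac{\pi}{\tau}\mathcal{R}_M^\tau$ piece, while estimating $\int q_{T_k}\cdot \tfrac{\pi}{\tau}\mathcal{R}_M^\tau\,d\tau$ directly by the trilinear bound $\|\mathcal{R}_M^\tau[F]\|_S \lesssim \|F\|_{Z_d}^2\|F\|_S$; summing the dyadic pieces reproduces exactly the admissible $(1+|t|)^\delta$ and $(1+|t|)^{5\delta}$ growth with a trilinear gain of $\varepsilon^3$.

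The crux is the time-uniform $\|F(t)\|_{Z_d}$ bound, where the non-integrability of $\int\tfrac{\pi}{\tau}\mathcal{R}_M^\tau\,d\tau$ forbids a direct Duhamel argument and the interpolation $\|F\|_{Z_d}\lesssim \|F\|_{L^2}^{1/4}\|F\|_S^{3/4}$ only yields $(1+|t|)^{3\delta/4}$-growth. Using the decomposition \eqref{eq Decomp}, the Duhamel contributions of $\Pi_M^t-\tfrac{\pi}{t}\mathcal{R}_M^t$, $\mathcal{E}_{1,M}^t$ and $\mathcal{E}_{err,M}^t$ are $O(\varepsilon^3)$ in $Z_d$ uniformly in $t$ by Proposition \ref{prop 3.1}, while the $\partial_t\mathcal{E}_{3,M}^t$ piece contributes a boundary term that is $O(M t^{-1/10}\varepsilon^3)$ by \eqref{eq E3M}. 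For the remaining non-integrable resonant piece I would perform a fiberwise energy estimate: at each fixed $\xi$, $\mathcal{R}_M^t$ is the vector field of the quasi-resonant Hamiltonian \eqref{eq ResHam}, and the hypothesis $\theta\in\Theta_1$ together with the cutoff \eqref{eq K} forces only fully resonant ($\omega=0$) interactions on the Fourier support $B_{K/3}$, where the symmetries of $\Gamma_0$ from Lemma \ref{char} and the conservation of Theorem \ref{Thm.constant} yield exact preservation of $\|\widehat{F}_p(\xi)\|_{h_p^{d/2+}}^2$. Taking the supremum in $\xi$ closes $\|F(t)\|_{Z_d}\leq \|F(0)\|_{Z_d}+C\varepsilon^3$.

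The main obstacle is this $Z_d$ estimate: closing it requires a careful choice of $M$ balancing the $M$-linear loss in \eqref{eq E3M} against the Diophantine frequency threshold $K=C_{d,\theta}M^{1/(2\gamma)}$, so $M$ cannot be taken too large. This balance is precisely where the Diophantine hypothesis $\theta\in\Theta_1$ and the resonance structure of Section \ref{subsec.resonances} are used essentially, and it is the main quantitative step of the proof.
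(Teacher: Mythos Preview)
Your overall strategy matches the paper's: a bootstrap closed via Proposition \ref{prop 3.1} for the $S$, $S^+$, and $\partial_t$-pieces, together with a fiberwise energy estimate on $\|\widehat{F}_p(\xi,\cdot)\|_{h_p^{d/2+}}^2$ for the $Z_d$-piece, exploiting the vanishing of the fully-resonant inner product. There is, however, one genuine gap in your $Z_d$ argument.

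You treat $M$ as a fixed parameter to be chosen by balancing the $M$-linear loss in \eqref{eq E3M} against the Diophantine cutoff $K\sim M^{1/(2\gamma)}$. This does not close. The resonant symmetry you invoke (it is really Claim \ref{claim constant}, not Theorem \ref{Thm.constant}, since $F$ does not solve \eqref{gaugeNLS}) only kills the contribution from modes in $B_{K/3}$; for the high-frequency tail one must insert the Littlewood--Paley splitting $F=L_{\leq K/3}F+L_{>K/3}F$ in the first slot of the inner product and estimate the $L_{>K/3}$ piece by Bernstein, picking up
\[
\left|\Big\langle L_{>K/3}\widehat{F}_p,\tfrac{\pi}{\tau}\mathcal{R}_M^\tau[\widehat{F}_p]\Big\rangle_{h_p^{d/2+}}\right|
\;\lesssim\;\tau^{-1}K^{-(N-\frac{d}{2}-)}\|F\|_{S}\|F\|_{Z_d}^3
\;\lesssim\;\tau^{-1+\delta}K^{-(N-\frac{d}{2}-)}\varepsilon^4.
\]
With $K$ fixed this integrates to $K^{-(N-\frac{d}{2}-)}\varepsilon^4\,t^{\delta}$, which grows and cannot be absorbed into the bootstrap. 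The paper's fix is to let $M$ be a \emph{piecewise constant function of time}, $M(\tau)\sim \tau^{\delta}$ (hence $K(\tau)\sim \tau^{\delta/(2\gamma)}$); then the Bernstein factor becomes $\tau^{-\delta(N-\frac{d}{2}-)/(2\gamma)}$, and the condition $N-\tfrac{d}{2}\gtrsim 4\gamma$ makes the integrand $\lesssim \tau^{-1-\delta}\varepsilon^4$, which is integrable. The same growth of $M(\tau)$ is mild enough that the $\mathcal{E}_{3,M(\tau)}$ boundary terms in the integration-by-parts for the $\partial_\tau\mathcal{E}_{3,M}$ piece still decay. This time-dependent choice of $M$ is the quantitative step you allude to but do not supply, and without it the $Z_d$ bound does not close.
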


\begin{proof}[Proof of Proposition \ref{prop 6.2}] 

Let $\mathcal{F} N^{\tau} [\widehat{F}_p (\xi ,\tau) ]=\mathcal{F} N^{\tau} [\widehat{F}_p (\xi ,\tau) , \widehat{F}_p (\xi ,\tau) , \widehat{F}_p (\xi ,\tau)]$. Then a basic computation yields
\begin{align}\label{eq 16}
\frac{1}{2} \frac{d}{d\tau}  \norm{\widehat{F}_p (\xi , \tau)}_{h_p^{\frac{d}{2}+}}^2  & = \inner{\widehat{F}_p (\xi ,\tau) , \frac{d}{d\tau} \widehat{F}_p (\xi , \tau)}_{h_p^{\frac{d}{2}+} \times h_p^{\frac{d}{2}+}}  = \inner{\widehat{F}_p (\xi , \tau) , \mathcal{F} N^{\tau} [\widehat{F}_p (\xi ,\tau) ]}_{h_p^{\frac{d}{2}+} \times h_p^{\frac{d}{2}+}} .
\end{align}
From here on, in this computation, $M(\tau)$ is a piecewise constant function of $\tau$. In particular,
\begin{align}
M(\tau) = 2^n, \quad n = \lfloor \ln (\tau^{\delta})  \rfloor ,
\end{align} 
where $\lfloor  x  \rfloor$ is the integer part of $x$. Furthermore, let $K (\tau)= C_{d, \theta}M(\tau)^{\frac{1}{2\gamma}}$ be defined as in Equation \eqref{eq K}.  For the moment, we will suppress the dependence of $M$ on $\tau.$
\begin{align}
\eqref{eq 16} & =\inner{\widehat{F}_p (\xi , \tau) , \mathcal{F} N^{\tau} [\widehat{F}_p (\xi ,\tau) ]}_{h_p^{\frac{d}{2}+} \times h_p^{\frac{d}{2}+}}  = \inner{ L_{> K/3}  \widehat{F}_p (\xi , \tau) + L_{\leq K/3} \widehat{F}_p (\xi , \tau)  , \mathcal{F} N^{\tau} [\widehat{F}_p (\xi ,\tau) ] ) }_{h_p^{\frac{d}{2}+} \times h_p^{\frac{d}{2}+}} \\
& =  \inner{L_{> K/3}  \widehat{F}_p (\xi , \tau) , \mathcal{F} N^{\tau} [\widehat{F}_p (\xi ,\tau) ] }_{h_p^{\frac{d}{2}+} \times h_p^{\frac{d}{2}+}} + \inner{L_{\leq K/3} \widehat{F}_p (\xi , \tau) ,\mathcal{F} N^{\tau} [\widehat{F}_p (\xi ,\tau) ]}_{h_p^{\frac{d}{2}+} \times h_p^{\frac{d}{2}+}} \\
& = : A + B \label{eq 17},
\end{align}
where we recall that  $L_{>K/3}$ and $L_{\leq K/3}$ are Littlewood-Paley projections on the frequency in $\T_{\theta}^d$ only.

We bound the first term, $A$, in \eqref{eq 17} via Cauchy-Schwarz and  Bernstein's inequality
    \begin{align}
       & \inner{L_{> K/3}  \widehat{F}_p (\xi , \tau) , \mathcal{F} N^{\tau} [\widehat{F}_p (\xi ,\tau) ] }_{h_p^{\frac{d}{2}+} \times h_p^{\frac{d}{2}+}} \leq \norm{L_{> K/3}  \widehat{F}_p}_{h_p^{\frac{d}{2}+}} \norm{\mathcal{N}^{\tau}}_{h_p^{\frac{d}{2}+}} \\
& \hspace{.7cm}\lesssim     M(\tau)^{-\frac{1}{2\gamma}(N - \frac{d}{2}-)} \norm{\widehat{F}_p}_{h_p^N}  (1 + \abs{\tau})^{-1 } \norm{\widehat{F}_p}_{h_p^{\frac{d}{2}+}}^3 \lesssim      \norm{\widehat{F}_p}_{S}  (1 + \abs{\tau})^{-1 -\frac{1}{2\gamma}\delta(N-\frac{d}{2}-)} \norm{\widehat{F}_p}_{h_p^{\frac{d}{2}+}}^3 \label{eq 3} .
\end{align}
By Definition \ref{defn XT}, $\norm{\widehat{F}_p(\xi, \tau)}_{S} \leq (1+|\tau|)^{\delta}\norm{\widehat{F}_p(\xi, \tau)}_{X_T^+}$ and  $\norm{\widehat{F}_p}_{h^{\frac{d}{2}+}} \leq\norm{\widehat{F}_p}_{X_T^+}$. 

Therefore, if $N-\frac{d}{2}- \geq 4\gamma$, then 
 \begin{align}
      \inner{L_{> K/3}  \widehat{F}_p (\xi , \tau) , \mathcal{F} N^{\tau} [\widehat{F}_p (\xi ,\tau) ] }_{h_p^{\frac{d}{2}+} \times h_p^{\frac{d}{2}+}}  
        &\lesssim  \norm{\widehat{F}_p}_{h_p^{\frac{d}{2}+}} \norm{\widehat{F}_p}_{X_T^+}^3(1 + \abs{\tau})^{-1 -\delta} .
  \end{align}

For the B term in \eqref{eq 17}, we recall the decomposition in \eqref{eq Decomp}
    \begin{align}
        \mathcal{F} N^{\tau}=\Pi^{\tau}_M+ \mathcal{E}_{1, M}^{\tau}   + \mathcal{E}_{2, M}^{\tau}=  (\Pi^{\tau}_M - \frac{\pi}{\tau}\mathcal{R}_M^{\tau})+ \mathcal{E}_{1, M}^{\tau}   + \mathcal{E}_{err, M}^{\tau}+ \partial_{\tau} \mathcal{E}_{3,M}^{\tau}+\frac{\pi}{\tau} \mathcal{R}_M^{\tau}.
    \end{align}
  Therefore,
\begin{align}
B =&\inner{L_{\leq K/3} \widehat{F}_p  ,\mathcal{F} N^{\tau} [\widehat{F}_p (\xi ,\tau) ]}_{h_p^{\frac{d}{2}+} \times h_p^{\frac{d}{2}+}}\\
&\hspace{.5cm}=\inner{L_{\leq K/3} \widehat{F}_p  , (\Pi^{\tau}_M - \frac{\pi}{\tau}\mathcal{R}_M^{\tau})  }_{h_p^{\frac{d}{2}+} \times h_p^{\frac{d}{2}+}}+\inner{L_{\leq K/3} \widehat{F}_p  , (  \mathcal{E}_{1, M}^{\tau}   + \mathcal{E}_{err, M}^{\tau} )  }_{h_p^{\frac{d}{2}+} \times h_p^{\frac{d}{2}+}}\\
&\hspace{1cm}+\inner{L_{\leq K/3} \widehat{F}_p  , \partial_{\tau} \mathcal{E}_{3,M}^{\tau}  }_{h_p^{\frac{d}{2}+} \times h_p^{\frac{d}{2}+}}+\inner{L_{\leq K/3} \widehat{F}_p  , \frac{\pi}{\tau} \mathcal{R}_M^{\tau}[\widehat{F}_p (\xi ,\tau) ] }_{h_p^{\frac{d}{2}+} \times h_p^{\frac{d}{2}+}} \\
& \hspace{.5cm} =: I+II+III+IV \label{eq 18}.
\end{align}
 Consider expression I in \eqref{eq 18}, by Lemma \ref{lem 3.7} and Definition \ref{defn XT}
 \begin{align}
       I = \inner{L_{\leq K/3} \widehat{F}_p  , (\Pi^{\tau}_M - \frac{\pi}{\tau}\mathcal{R}_M^{\tau})  }_{h_p^{\frac{d}{2}+} \times h_p^{\frac{d}{2}+}} & \leq\norm{ L_{\leq K/3} \widehat{F}_p }_{h_p^{\frac{d}{2}+}} \norm{\Pi^{\tau}_M - \frac{\pi}{\tau}\mathcal{R}_M^{\tau}}_{h_p^{\frac{d}{2}+}} \\
      &\lesssim   \norm{\widehat{F}_p}_{h_p^{\frac{d}{2}+}} (1 + \abs{\tau})^{-1 - 17\delta} \norm{\widehat{F}_p}_{X_T^+}^3  .
  \end{align}
 We now use Proposition \ref{prop 3.1} for expression II from \eqref{eq 18}, 
  \begin{align}
    II = \inner{L_{\leq K/3} \widehat{F}_p  , (  \mathcal{E}_{1, M}^{\tau}   + \mathcal{E}_{err, M}^{\tau} )  }_{h_p^{\frac{d}{2}+} \times h_p^{\frac{d}{2}+}}
     &\leq \norm{ L_{\leq K/3} \widehat{F}_p }_{h_p^{\frac{d}{2}+}} \norm{\mathcal{E}_{1,M}^{\tau}+\mathcal{E}_{err, M}^{\tau}}_{h_p^{\frac{d}{2}+}} \\
     &\lesssim   \norm{\widehat{F}_p}_{h_p^{\frac{d}{2}+}} (1 + \abs{\tau})^{-1 - \delta} \norm{\widehat{F}_p}_{X_T^+}^3 .
  \end{align}
Again, by Proposition \ref{prop 3.1} and Definition \ref{defn XT}, we estimate expression III in  \eqref{eq 18} by estimating three terms associated to the integration-by-parts calculation,
  \begin{align}
       \inner{L_{\leq K/3} \partial_{\tau} \widehat{F}_p ,  \mathcal{E}_{3,M}^{\tau}}_{h_p^{\frac{d}{2}+} \times h_p^{\frac{d}{2}+}} &\lesssim    \norm{\partial_{\tau} \widehat{F}_p}_{h_p^{\frac{d}{2}+}} M (\tau) (1 + \abs{\tau})^{-\frac{1}{10}} \norm{\widehat{F}_p}_{X^+_T}^2 \norm{ \widehat{F}_p}_{h_p^{\frac{d}{2}+}}\\
       &\lesssim   \sup_{\tau \in [0, t]} \norm{\widehat{F}_p}_{h_p^{\frac{d}{2}+}}\norm{\widehat{F}_p}_{X^+_T}^3  M (\tau) (1 + \abs{\tau})^{-1-\frac{1}{10}+7\delta}   .
  \end{align}
  \begin{align}
       \inner{L_{\leq K/3}  \widehat{F}_p ,  \mathcal{E}_{3,M}^{t}}_{h_p^{\frac{d}{2}+} \times h_p^{\frac{d}{2}+}} &\lesssim    \norm{ \widehat{F}_p}_{h_p^{\frac{d}{2}+}} M (t) (1 + \abs{t})^{-\frac{1}{10}} \norm{\widehat{F}_p}_{X^+_T}^2 \norm{ \widehat{F}_p}_{h_p^{\frac{d}{2}+}}.
  \end{align}
  \begin{align}
       \inner{L_{\leq K/3}  \widehat{F}_p ,  \mathcal{E}_{3,M}^{0}}_{h_p^{\frac{d}{2}+} \times h_p^{\frac{d}{2}+}} &\lesssim    M (0)\norm{ \widehat{F}_p(\xi, 0)}_{h_p^{\frac{d}{2}+}}^4 .
  \end{align}
  
Expression IV in \eqref{eq 18} requires a different argument that is not unlike that used for the second term, but with an additional identity from Appendix \ref{app.nocasc}. We first decompose in the following way
    \begin{align}
      IV=   \inner{L_{\leq K/3} \widehat{F}_p  , \frac{\pi}{\tau} \mathcal{R}_M^{\tau}[\widehat{F}_p (\xi ,\tau) ] }_{h_p^{\frac{d}{2}+} \times h_p^{\frac{d}{2}+}}&=\frac{\pi}{\tau} \inner{L_{\leq K/3} \widehat{F}_p  , \sum_{|\omega|<\frac{1}{M}} \sum_{(p, q, r, s) \in \Gamma_{\omega}} e^{i \tau \omega}\widehat{F}_q\overline{\widehat{F}}_r\widehat{F}_s }_{h_p^{\frac{d}{2}+} \times h_p^{\frac{d}{2}+}}\\
        &=\frac{\pi}{\tau} \inner{L_{\leq K/3} \widehat{F}_p  ,  \sum_{(p, q, r, s) \in \Gamma_{0}^K} e^{i \tau \omega}\widehat{F}_q\overline{\widehat{F}}_r\widehat{F}_s }_{h_p^{\frac{d}{2}+} \times h_p^{\frac{d}{2}+}}\\
        &+\frac{\pi}{\tau} \inner{L_{\leq K/3} \widehat{F}_p  , \sum_{|\omega|<\frac{1}{M}} \sum_{(p, q, r, s) \in \Gamma_{\omega} \atop \max(\|p\|, \|q\|, \|r\|, \|s\|)>K} e^{i \tau \omega}\widehat{F}_q\overline{\widehat{F}}_r\widehat{F}_s}_{h_p^{\frac{d}{2}+} \times h_p^{\frac{d}{2}+}} .
    \end{align}
We will use Bernstein's inequality and basic trilinear estimates to bound the second term in IV. The first term in IV needs to be decomposed further:
\begin{align*}
   \frac{\pi}{\tau} \inner{L_{\leq K/3} \widehat{F}_p  ,  \sum_{(p, q, r, s) \in \Gamma_{0}^K} e^{i \tau \omega}\widehat{F}_q\overline{\widehat{F}}_r\widehat{F}_s }_{h_p^{\frac{d}{2}+} \times h_p^{\frac{d}{2}+}}
    &=\frac{\pi}{\tau} \inner{L_{\leq K/3} \widehat{F}_p  ,  \sum_{(p, q, r, s) \in \Gamma_{0}^{K/3}} e^{i \tau \omega}\widehat{F}_q\overline{\widehat{F}}_r\widehat{F}_s }_{h_p^{\frac{d}{2}+} \times h_p^{\frac{d}{2}+}}
    \\&+ \frac{\pi}{\tau} \inner{L_{\leq K/3} \widehat{F}_p  ,  \sum_{(p, q, r, s) \in \Gamma_{0}^K\atop \max(\|p\|, \|q\|, \|r\|, \|s\|)>K/3} e^{i \tau\omega}\widehat{F}_q\overline{\widehat{F}}_r\widehat{F}_s }_{h_p^{\frac{d}{2}+} \times h_p^{\frac{d}{2}+}} .
\end{align*}
By Claim \ref{claim constant}, 
    \begin{align}
        \frac{\pi}{\tau} \inner{L_{\leq K/3} \widehat{F}_p  ,  \sum_{(p, q, r, s) \in \Gamma_{0}^{K/3}} e^{i \tau \omega}\widehat{F}_q\overline{\widehat{F}}_r\widehat{F}_s }_{h_p^{\frac{d}{2}+} \times h_p^{\frac{d}{2}+}}=0 .
    \end{align}
Furthermore, as described above
    \begin{align}
        \left|\frac{\pi}{\tau} \inner{L_{\leq K/3} \widehat{F}_p  ,  \sum_{(p, q, r, s) \in \Gamma_{0}^K\atop \max(\|p\|, \|q\|, \|r\|, \|s\|)>K/3} e^{i \tau \omega}\widehat{F}_q\overline{\widehat{F}}_r\widehat{F}_s }_{h_p^{\frac{d}{2}+} \times h_p^{\frac{d}{2}+}}\right| \lesssim \frac{1}{\tau}\|L_{> K/3} \widehat{F}\|_ {h_p^{\frac{d}{2}+}}\|\widehat{F}\|_{h_p^{\frac{d}{2}+}}^3
    \end{align}
    and
    \begin{align}
        \left| \frac{\pi}{\tau} \inner{L_{\leq K/3} \widehat{F}_p  ,  \sum_{|\omega|<\frac{1}{M}} \sum_{(p, q, r, s) \in \Gamma_{\omega} \atop \max(\|p\|, \|q\|, \|r\|, \|s\|)>K} e^{i \tau\omega}\widehat{F}_q\overline{\widehat{F}}_r\widehat{F}_s }_{h_p^{\frac{d}{2}+} \times h_p^{\frac{d}{2}+}} \right| \lesssim \frac{1}{\tau}\|L_{> K} \widehat{F}\|_ {h_p^{\frac{d}{2}+}}\|\widehat{F}\|_ {h_p^{\frac{d}{2}+}}^3 .
    \end{align}
    We finally have that
    \begin{align}
      IV=  \inner{L_{\leq K/3} \widehat{F}_p  , \frac{\pi}{\tau} \mathcal{R}_M^{\tau}[\widehat{F}_p (\xi ,\tau) ] }_{h_p^{\frac{d}{2}+} \times h_p^{\frac{d}{2}+}}  &\lesssim  M(\tau)^{-\frac{1}{2\gamma}(N - \frac{d}{2}-)} \norm{\widehat{F}_p}_{h_p^N}  (1 + \abs{\tau})^{-1 } \norm{\widehat{F}_p}_{h_p^{\frac{d}{2}+}}^3 \\
        &\lesssim  (1 + \abs{\tau})^{-1-\delta}  \norm{\widehat{F}_p}_{h_p^{\frac{d}{2}+}}   \norm{\widehat{F}_p}_{X_T^+}^3 .
    \end{align}
This completes the estimate for term B from equation \eqref{eq 17}. Combining the previous estimates, we have shown that
    \begin{align}
       \eqref{eq 16} = \tfrac{1}{2} \tfrac{d}{d\tau}  \norm{\widehat{F}_p (\xi , \tau)}_{h_p^{\frac{d}{2}+}}^2\lesssim  (1 + \abs{\tau})^{-1-\delta}  \norm{\widehat{F}_p(\xi, \tau)}_{h_p^{\frac{d}{2}+}}   \norm{\widehat{F}_p}_{X_T^+}^3 .
    \end{align}
Using the fundamental theorem of calculus, we have
\begin{align}
 \abs{\norm{\widehat{F}_p (\xi , t)}_{Z_d}-  \norm{\widehat{F}_p (\xi , 0)}_{Z_d}} &= \abs{\int_0^t \tfrac{d}{d\tau}  \norm{\widehat{F}_p (\xi , \tau)}_{Z_d} \, d\tau} \lesssim  \norm{\widehat{F}_p}_{X_T^+}^3.
\end{align}

Finally, Proposition \ref{prop 3.1} and Remark \ref{rmk Replacement} provide the necessary bound for the $S$-norm:
\begin{align}
    \norm{F(t)-F(1)}_{S} \lesssim \norm{\int_{1}^t  \mathcal{N}^{\tau}-\frac{\pi}{\tau}\mathcal{R}_M^{\tau} \, d\tau}_{S} + \norm{\int_{1}^t  \frac{\pi}{\tau}\mathcal{R}_M^{\tau} \, d\tau}_{S} \lesssim |t|^{\delta}\norm{\widehat{F}_p}_{X_T^+}^3 .
\end{align}

These estimates suffice to construct a contraction mapping existence and uniqueness argument as long as the $X_T^+$ norm and the time-zero $S^+$ norm are small enough.

\end{proof}

For the rest of this section, we are assuming that $\theta \in \Theta_1$ (recall Definition \ref{def.Theta}). Recall from Subsection \ref{subsec.boundeddynamics}, since $\theta \in \Theta_1$, there exists $\gamma>d$ such that the following Diophantine condition holds:
        \begin{align*}
        \left| \sum_{i=1}^d \theta_i^2 n_i \right| \geq C_{\theta}\|(n_1, ..., n_d)\|^{-\gamma}.
    \end{align*}

		\begin{theorem}\label{thm.asym}
			Let $d \geq 2$ and let $N>10 (d +\gamma)$.  There exists $\varepsilon(N, d)>0$ such that  if $\norm{U_0}_{S^+}<\varepsilon(N, d)$ and $U(t)$ solves \eqref{eq NLS} with initial data $U_0$, then there exists a sequence $\{G_j\}^{\infty}_{j=1}$ such that 
			\begin{align*}
				G_j: [2^{j}, 2^{j+1}) \to H^N
			\end{align*}
such that if $G(t) = \sum_{j=1}^{\infty} G_j(t)$, then 
			\begin{align*}
				\|e^{-it \Delta_{\R \times \T_{\theta}^d}} U(t) - G(t) \|_{H^{N-1}} \to 0
			\end{align*}
as $t \to \infty$. Moreover, there exists $T_U$, such that 
			\begin{align*}
			\sup_{t \in [0, \infty)}\|U\|_{H^{N-1}} \leq C_1\varepsilon^5 +
			C_2\sup_{[0, T_U]}\|U(t)\|_{H^N}.
			\end{align*}
		\end{theorem}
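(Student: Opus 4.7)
The plan is to combine Proposition \ref{prop 6.2}, Lemma \ref{asymptotic}, and Theorem \ref{Thm.constant} in a dyadic-in-time scheme. Proposition \ref{prop 6.2} gives a global solution $U$ with $\|F\|_{X_T^+} \leq 2\varepsilon$ uniformly in $T$, where $F(t) = e^{-it \Delta_{\R \times \T_{\theta}^d}} U(t)$. In particular $\|F(t)\|_S \lesssim \varepsilon(1+|t|)^{\delta}$ and $\|F(t)\|_{Z_d} \lesssim \varepsilon$, and the full equation \eqref{eq newNLS} holds. Moreover $L^2$ is conserved by the flow, which will be used in the interpolation step below.

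On each dyadic interval $I_n = [2^n, 2^{n+1})$, following the proof of Proposition \ref{prop 6.2}, I take $M_n = 2^{\lfloor n \delta \rfloor}$ and $K_n = C_{d,\theta}M_n^{1/(2\gamma)}$, and define $G_n : I_n \to H^N$ as the unique solution to the effective equation \eqref{eq effect} with $M=M_n$ and initial datum $G_n(2^n) = L_{\leq K_n/3} F(2^n)$. By Corollary \ref{nogrowth}, this initial Fourier support is preserved on $I_n$, and Theorem \ref{Thm.constant} then implies that $\|G_n(t)\|_{H^N}$ is constant on $I_n$, hence bounded by $\|F(2^n)\|_{H^N}$. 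I then apply Lemma \ref{asymptotic} with $\varepsilon_1 = 2\varepsilon$ and $\varepsilon_2^{(n)} := \|L_{>K_n/3} F(2^n)\|_{H^N}$ to obtain
\begin{align*}
\sup_{t \in I_n} \|F(t) - G_n(t)\|_{H^N} \lesssim \varepsilon_2^{(n)} + \varepsilon^3 2^{-2n\delta} + \varepsilon^2 2^{n\delta}\bigl(\varepsilon_2^{(n)} + \varepsilon^3 2^{-2n\delta}\bigr).
\end{align*}
To pass to the $H^{N-1}$ norm, I interpolate using the frequency gain $\|L_{>K_n/3} F(2^n)\|_{H^{N-1}} \lesssim K_n^{-1}\|F(2^n)\|_{H^N}$, which is precisely the type of estimate used to bound the term $A$ in the proof of Proposition \ref{prop 6.2}; the hypothesis $N > 10(d+\gamma)$ provides the room to play the extra derivative of regularity against the Diophantine factor $M_n^{-(N-d/2-)/(2\gamma)}$.

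Setting $G(t) := \sum_{j \geq 1} G_j(t)\mathbf{1}_{I_j}(t)$, I choose $T_U = 2^{n_U}$ to be the smallest dyadic time past which the tail $\sum_{k \geq n_U}\bigl(\varepsilon_2^{(k)} + \varepsilon^3 2^{-2k\delta} + \varepsilon^5 2^{-k\delta}\bigr)$ is controlled in $H^{N-1}$ by $C_1\varepsilon^5$. Summability of the error pieces immediately yields $\|F(t) - G(t)\|_{H^{N-1}} \to 0$ as $t\to\infty$, giving the modified-scattering statement. For the uniform bound, I use the triangle inequality $\|F(t)\|_{H^{N-1}} \leq \|G_n(t)\|_{H^{N-1}} + \|F(t) - G_n(t)\|_{H^{N-1}}$ together with the conservation $\|G_n(t)\|_{H^{N-1}} \leq \|F(2^n)\|_{H^{N-1}}$ from Theorem \ref{Thm.constant}, which iterates into
\begin{align*}
\|F(2^{n+1})\|_{H^{N-1}} \leq \|F(2^n)\|_{H^{N-1}} + E_n, \qquad \sum_{n \geq n_U} E_n \lesssim \varepsilon^5.
\end{align*}
Summing across $n \geq n_U$ and bounding the initial window $[0,T_U]$ by $\sup_{[0,T_U]}\|U(t)\|_{H^N}$ produces the claimed estimate $\sup_{t \in [0,\infty)} \|U(t)\|_{H^{N-1}} \leq C_1 \varepsilon^5 + C_2\sup_{[0,T_U]}\|U(t)\|_{H^N}$.

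The principal obstacle is the summability in $n$ of the stability error, which is delicate because of the amplification factor $2^{n\delta}$ in the conclusion of Lemma \ref{asymptotic}. The contribution $\varepsilon^2 2^{n\delta} \cdot \varepsilon^3 2^{-2n\delta} = \varepsilon^5 2^{-n\delta}$ is easily summable, but the combined term $\varepsilon^2 2^{n\delta} \varepsilon_2^{(n)}$ forces one to use the Diophantine decay of $K_n^{-1}$ with enough regularity to beat $2^{n\delta}$. This is exactly where the threshold $N > 10(d+\gamma)$ enters: it provides the additional derivative needed for the $H^{N-1}$ interpolation of the projection error while still leaving $N - d/2$ large compared to $4\gamma$ to close the $X_T^+$-smallness bootstrap of Proposition \ref{prop 6.2}. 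Without this slack, the inductive control of $\|F(2^n)\|_{H^{N-1}}$ fails and both the modified-scattering and uniform-boundedness conclusions are lost.
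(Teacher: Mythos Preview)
Your overall architecture---dyadic time intervals, frequency-truncated initial data for the effective flow, then Lemma~\ref{asymptotic} plus the conservation from Theorem~\ref{Thm.constant}---is exactly the paper's strategy, and your telescoping argument for the uniform $H^{N-1}$ bound is a legitimate (and arguably cleaner) alternative to the paper's Cauchy-sequence argument at time $0$. The gap is in your choice of $M_n$.

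You set $M_n = 2^{\lfloor n\delta\rfloor}$, so $K_n = C_{d,\theta}M_n^{1/(2\gamma)} \sim 2^{n\delta/(2\gamma)}$. Passing from $H^N$ to $H^{N-1}$ gains exactly one derivative, i.e.\ a factor $K_n^{-1}$, \emph{not} the factor $M_n^{-(N-d/2-)/(2\gamma)} = K_n^{-(N-d/2-)}$ that appears in the $A$-term of Proposition~\ref{prop 6.2} (there one is dropping from $h^N$ all the way to $h^{d/2+}$). With only $K_n^{-1}$ you get
\[
\|L_{>K_n/3}F(2^n)\|_{H^{N-1}} \lesssim K_n^{-1}\|F(2^n)\|_{H^N} \lesssim \varepsilon\, 2^{n\delta(1-1/(2\gamma))},
\]
which \emph{grows} since $\gamma\geq d\geq 2$ forces $1/(2\gamma)\leq 1/4$. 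Consequently the hypothesis $\varepsilon_2<\varepsilon_1$ of Lemma~\ref{asymptotic} fails for large $n$, the term $\varepsilon^2 2^{n\delta}\varepsilon_2^{(n)}$ diverges, and neither the modified-scattering limit nor the summability $\sum_n E_n<\infty$ closes. Raising $N$ does not help here: the extra regularity from $N>10(d+\gamma)$ is spent in the bootstrap of Proposition~\ref{prop 6.2}, not in this step. The paper's remedy is to take $M_n$ much larger, namely $M_n = 2^{20\gamma\delta n}$, so that $K_n = C_{d,\theta}2^{10\delta n}$ and $K_n^{-1}\|F(2^n)\|_{H^N}\lesssim \varepsilon 2^{-9n\delta}$; since Lemma~\ref{asymptotic} is uniform in $M\geq 0$, there is no penalty for enlarging $M_n$, and with this choice all errors are dominated by $\varepsilon^3 2^{-2n\delta}+\varepsilon^5 2^{-n\delta}$ and sum. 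Once you make that single correction, your argument goes through essentially as written.
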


\begin{proof}[Proof of Theorem \ref{thm.asym}]

	Let $\{M_n\}_{n=1}^{\infty} \subset \mathbb{R}_+$ and $\{K_n\}_{n=1}^{\infty} \subset \mathbb{R}_+$  defined by 
	    \begin{align}\label{eq 21}
	    \begin{aligned}
            M_n&:=2^{20\gamma\delta n} ,\\
	        K_n&:=C_{ d, \theta}2^{10 \delta n}= C_{ d, \theta}M_N^{\frac{1}{2\gamma}}.
	      \end{aligned}
	    \end{align} 
	   The relationship between $K_n$ and $M_n$ is the same as given by equation \eqref{eq K} and will serve the same purpose. 
	   
	  Let $\norm{U_0}_{S^+}<\varepsilon$ where epsilon is given by Proposition \ref{prop 6.2}. Let $F(t) = e^{-it\Delta_{\R \times \T_{\theta}^d}} U(t)$ where $U(t)$ is a global solution to \eqref{eq NLS} with $U(0)=U_0$. For each $n \in \mathbb{Z}_+$, define $g_n(n)$ as a Fourier cutoff (in the discrete Fourier variables) of $F(2^n)$ at level $K_n/3$:
	\begin{align*}
		\widehat{g}_n(\pi (\ln 2) n , \xi, p) := \left\{ \begin{array}{ll}  0 & |p|>K_n/3 \\ \widehat{F}(2^n, \xi, p) & |p|\leq K_n/3 .\end{array}  \right.
	\end{align*}

It follows immediately that 
\begin{align}
 \|g_n(\pi (\ln 2)n) - F(2^n)\|_{H^{N-1}} \lesssim K_n^{-1}\|F(2^n)\|_{H^N}    .
\end{align}
Now let $g_n: [\pi(\ln 2) n, \pi(\ln 2)(n+1)) \to H^N$ be a solution to the equation associated to the Hamiltonian, $H_{M_n}$, defined in \eqref{eq ResHam}
	\begin{align*}
		H_{M_n}(\psi)&= H_0(\psi)+N_{M_n}(\psi)\\
				&=\tfrac{1}{2} \int_{\xi \in \mathbb{R}}  \sum_{p \in \mathbb{Z}^d}(|\xi|^2+\lambda_{ p} )|\widehat{\psi}(\xi, p)|^2+ \tfrac{1}{4} \int_{\xi \in \mathbb{R}} \sum_{|\omega| < \frac{1}{M_n}}\sum_{(p, q, r, s) \in \Gamma_{\omega}} \widehat{\psi}(\xi, q) \widehat{\psi}(\xi, s) \overline{\widehat{\psi}}(\xi, r) \overline{\widehat{\psi}}(\xi, p) .
	\end{align*}

By Proposition \ref{nocasc}, $|g_n(t)|_N=|g_n(\pi (\ln 2) n)|_N$ for $t \in [\pi(\ln 2) n, \pi(\ln 2)(n+1))$. Now define
    \begin{align}
        G_n(t):= e^{-it\Delta_{\R \times \T^d_{\theta}}}g_n(\pi\cdot \ln t) .
    \end{align}
    Then $G_n(t)$ satisfies \eqref{eq effect} on $[2^n, 2^{n+1}]$ and $|G_n(t)|_N=|G_n(2^n)|_N$.
    For $n$ large enough, $K_n^{-1}\|F(2^n)\|_{H^N} \leq \varepsilon 2^{-9n\delta} \leq \varepsilon^2 2^{-2n\delta}$. Therefore, the $\varepsilon_1$ terms from Lemma \ref{asymptotic} dominate and thus 
    \begin{align}
        \norm{F(t) - G_n(t)}_{H^{N-1}} & \lesssim \varepsilon^3 2^{-2n \delta}+ \varepsilon^5 2^{-n\delta}
    \end{align}
for $t \in [2^n, 2^{n+1})$.

Now this implies 
\begin{align}
\norm{G_n (2^{n+1}) - G_{n+1} (2^{n+1})}_{H^{N-1}} \leq \norm{G_n (2^{n+1}) - F(2^{n+1})}_{H^{N-1}} + \norm{F(2^{n+1}) - G_{n+1} (2^{n+1})}_{H^{N-1}}
\end{align}
where the first term is less than $\varepsilon^3 2^{-2n \delta}+ \varepsilon^5 2^{-n\delta}$ due to the estimates above and the second term is less than $\varepsilon 2^{-8n\delta} $ by the definition. Thus for $n$ large enough, $\norm{G_n (2^{n+1}) - G_{n+1} (2^{n+1})}_{H^{N-1}} \lesssim  2^{-n\delta}$.

\begin{claim}\label{claim 3}
If
\begin{align}
\norm{G_n (2^{n+1}) - G_{n+1} (2^{n+1})}_{H^{N-1}} \lesssim \varepsilon^5 2^{-n\delta} ,
\end{align}
then
\begin{align}
\norm{G_n (0) - G_{n+1} (0)}_{H^{N-1}} \lesssim  \varepsilon^52^{-n\delta/2}  .
\end{align}
\end{claim}
The claim follows from Lemma 4.3 in \cite{HPTV15}. 

This implies that $\{ G_n (0)\}_{n=1}^{\infty}$ is Cauchy. Then there exists $G_{\infty}$ such that $G_n (0) \to \infty$ and $\sup_{n} \norm{G_n (0)}_{H^{N-1}} \lesssim \varepsilon^5$. 

Then
\begin{align}
\sup_{n} \sup_{t \in [0, 2^{n+1}]} \norm{G_n(t)}_{H^{N-1}} \lesssim \varepsilon^5. 
\end{align}
Finally, for $T$ large enough,
\begin{align}
    \sup_{t \in [0, \infty)}\|U\|_{H^{N-1}} \leq \sup_{t \in [0, T]}\|U\|_{H^{N}}+\sup_{n>\log T} \sup_{t \in [2^n, 2^{n+1}]} \left(\norm{e^{-it\Delta }U(t) - G_n(t)}_{H^{N-1}} +  \norm{G_n(t)}_{H^{N-1}}\right)<\infty .
\end{align}
    
This completes the proof of Theorem \ref{thm.asym}.

\end{proof}

\begin{remark}\label{rmk.eta}
    For $\eta>0$, the $H^{N-1}$-norm in the statement of Theorem \ref{thm.asym} can be replaced by $H^{N-\eta}$-norm and the proof follows by a redefinition of $M_n$ adapted to $\eta.$ In particular, one would let 
        \begin{align}
            M_n= 2^{(20\gamma\delta n)/\eta}
        \end{align}
    and the proof would follow without introducing a constant depending on $\eta.$ This does not necessarily imply a uniform in $\eta$ bound on the $H^{N-\eta}$-norms of $\{G_j\}_{j=1}$.
\end{remark}

	\begin{theorem}\label{thm.asy}
	Let $d \geq 2$, let $\theta \in \Theta_2$ (recall Definition \ref{def.Theta}), let $N>10 (d+\gamma)$, and let $\mathcal{C}\gg 1$.  There exists a solution to \eqref{eq NLS}, $U(t)$, and two times $0< T_1({\mathcal{C}}) <T_2(\mathcal{C})$ such that 
		\begin{align*}
			\|U(T_2)\|_{H^N} \geq \mathcal{C}\|U(T_1)\|_{H^N},
		\end{align*}
		where $T_2-T_1 \leq \exp(\frac{C_N}{\varepsilon^2}\exp(\mathcal{C}^{\beta}))$ for some $\beta=\beta(N)>0.$
		\end{theorem}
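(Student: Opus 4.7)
The approach, following the outline of the proof of Theorem \ref{thm.main}, has two main ingredients: the growing trajectory from Corollary \ref{GrowCor} and the stability estimate of Lemma \ref{asymptotic}. The plan is to first construct, from the growing solution $v$ of the waveguide resonant system, a growing solution $G$ of the effective equation \eqref{eq effect}, and then to use Lemma \ref{asymptotic} to produce an NLS solution $U$ that tracks $G$ throughout the growth window and therefore inherits the growth.

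Fix $\mathcal{C} \gg 1$ and set $\mathcal{C}' := c\mathcal{C}$ for a constant $c>1$ large enough to absorb forthcoming approximation errors. Since $\theta \in \Theta_2$, Corollary \ref{GrowCor} produces a solution $v(\tau)$ of the Hamiltonian system associated to \eqref{Rreso}, defined on $[0,\tau_0]$, with $\|v(\tau_0)\|_{H^N} \geq \mathcal{C}'\|v(0)\|_{H^N}$, $\|v(\tau)\|_{L^2} \leq (\mathcal{C}')^{-\eta}$, and $\tau_0 \leq \exp((\mathcal{C}')^{\beta})$. By inspection of the construction in \cite{GG22}, the torus-Fourier support of $v(\tau)$ remains inside a ball $B_K$ with radius $K = K(\mathcal{C}')$ growing at most polynomially in $\mathcal{C}'$, uniformly in $\tau \in [0,\tau_0]$.

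To bridge $v$ to the effective equation, set $\widetilde{v}(\tau) := e^{-i\tau\Delta_{\R\times\T_{\theta}^d}}v(\tau)$; a computation analogous to the one leading to \eqref{eq newNLS} shows $i\partial_\tau\widetilde{v} = \mathcal{R}_0^{\tau}[\widetilde{v}]$. Choose $M$ with $C_{d,\theta}M^{1/(2\gamma)} \gtrsim K$, so that the Diophantine bound for $\theta \in \Theta_1$ (Subsection \ref{subsec.boundeddynamics}) forces every quartet $(p,q,r,s) \in (B_K)^4$ with $|\lambda_p - \lambda_q + \lambda_r - \lambda_s|<1/M$ to be exactly resonant. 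On $B_K$ the discrepancy $\mathcal{R}_0^{\tau}-\mathcal{R}_M^{\tau}$ consists of terms with $|\omega| \geq 1/M$ only; integrating these by parts in $\tau$ (in the spirit of the treatment of $\widetilde{\mathcal{N}}_M^t$ in Section \ref{sec.decomp}) produces an error which can be absorbed into the factor $c$ chosen above. Setting $G(t) := \widetilde{v}(\pi \ln t)$ then yields a solution of \eqref{eq effect} on $[1,t_2]$ with $t_2 = e^{\tau_0/\pi}$ and $\|G(t_2)\|_{H^N} \geq \mathcal{C}'\|G(1)\|_{H^N}$. Exploiting the symmetry $G(t) \mapsto \lambda G(at^{\lambda^2})$ of this equation on $B_K$ (where the resonant nonlinearity is $t$-independent), rescale to obtain $G_\varepsilon$ starting at a dyadic time $2^{n_0}$, with $\|G_\varepsilon(2^{n_0})\|_{S^+} \sim \varepsilon$, $\|G_\varepsilon(T_2)\|_{H^N} \geq \mathcal{C}'\|G_\varepsilon(2^{n_0})\|_{H^N}$, and $T_2 - 2^{n_0} \leq \exp(C_N\varepsilon^{-2}\exp(\mathcal{C}^{\beta}))$; here $n_0 \gtrsim \delta^{-1}\log(\mathcal{C}/\varepsilon)$ is taken large enough that the errors in Lemma \ref{asymptotic} at each dyadic step are dominated by $\varepsilon/\mathcal{C}$.

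Finally, invoke Proposition \ref{prop 6.2} to obtain a global NLS solution $U(t)$ with $\|U_0\|_{S^+}\leq \varepsilon$ and $\|F\|_{X_T^+}\leq 2\varepsilon$ uniformly in $T$, where $F = e^{-it\Delta}U$. Arrange $U_0$ so that $\|F(2^{n_0}) - G_\varepsilon(2^{n_0})\|_{H^N}\lesssim \varepsilon\mathcal{C}^{-10}$, which is feasible since $G_\varepsilon(2^{n_0})$ is frequency-localized in $B_K$. Iterating Lemma \ref{asymptotic} on the dyadic intervals covering $[2^{n_0}, T_2]$, and using the uniformity in $M$ of that estimate, the cumulative error $\|F(T_2) - G_\varepsilon(T_2)\|_{H^N}$ stays at most $\varepsilon\mathcal{C}^{-1}/10$; combined with the preserved growth of $G_\varepsilon$, this gives $\|U(T_2)\|_{H^N}\geq \mathcal{C}\|U(2^{n_0})\|_{H^N}$, as required. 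The principal obstacle is making quantitative the sense in which $\widetilde{v}$, which genuinely satisfies $\mathcal{R}_0^{\tau}[\widetilde{v}]$, may be treated as a solution of $\mathcal{R}_M^{\tau}[\widetilde{v}]$ on the growth window: $M$ must be large enough to trivialize the near-resonances on $B_K$ via the Diophantine bound, yet compatible with controlling the integrated oscillatory error from the $|\omega|\geq 1/M$ terms over a window of length as large as $\exp((\mathcal{C}')^{\beta})$. The Khintchine-type approximation built into the definition of $\Theta_2$ ensures these constraints can be reconciled, but verifying their compatibility together with the bookkeeping of accumulated error across the many dyadic subintervals constitutes the technical heart of the argument.
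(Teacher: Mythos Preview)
Your overall strategy is sound, but you have created an unnecessary obstacle for yourself. The paper's proof applies Lemma~\ref{asymptotic} with $M=0$. Recall that $\Pi_0^t=\mathcal{N}^t$, $\widetilde{\mathcal{N}}_0^t=0$, and $\mathcal{R}_0^t$ is precisely the nonlinearity of the $\xi$-resonant system \eqref{Rreso} in the interaction picture; moreover both Proposition~\ref{prop 3.1} and Lemma~\ref{asymptotic} are stated uniformly in $M\ge 0$. Consequently the solution $v$ from Corollary~\ref{GrowCor}, after the logarithmic time change $G(t)=\widetilde v(\pi\ln t)$, already solves \eqref{eq effect} exactly for $M=0$, and one may invoke the stability lemma directly. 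There is no need to pass to $M>0$, no discrepancy $\mathcal{R}_0^\tau-\mathcal{R}_M^\tau$ to integrate by parts, and no competition between ``$M$ large enough to kill near-resonances on $B_K$'' and ``$M$ small enough to control the oscillatory remainder over an exponentially long window''. The entire paragraph you flag as ``the technical heart of the argument'' simply disappears.

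The paper also avoids the iterated dyadic bookkeeping. Rather than tracking accumulated errors across many intervals $[2^n,2^{n+1})$, it rescales $v$ to amplitude $\varepsilon$ so that the growth occurs over a single window starting at $T_1=\exp(t_1\|v(0)\|_{H^N}^2/\varepsilon^2)$, sets $e^{-iT_1\Delta}U(T_1):=G(T_1)$ (so $\varepsilon_2=0$ in Lemma~\ref{asymptotic}), and reads off $\|U(T_2)\|_{H^N}\ge 2\mathcal{C}\varepsilon-\varepsilon^3 T_1^{-\delta}\ge \mathcal{C}\varepsilon$. The $X_T^+$ control on $U$ comes from Proposition~\ref{prop 6.2}, and the asserted bound $\|G\|_{X_T^+}\le 2\varepsilon$ is compatible with $\|G(T_2)\|_{H^N}\sim\mathcal{C}\varepsilon$ because the $X_T^+$-norm only penalizes $\|\cdot\|_S$ by $(1+|t|)^{-\delta}$ and $T_2$ is double-exponentially large. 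Your route could in principle be made to work, but it is substantially longer and the gap you identify at the end is genuine until you quantify it; the paper's choice $M=0$ sidesteps it entirely.
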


    \begin{proof}[Proof of Theorem \ref{thm.asy}]
    Let $t_1= \exp(\mathcal{C}^{\beta})$ and $t_2= 2t_1$. Consider the solution to \eqref{Rreso} given by Corollary \ref{GrowCor}, $v(0)$, defined so that $\|v(\tau)\|_{H^N}\geq2\mathcal{C} \|v(0)\|_{H^N}$ for some time $\tau\leq t_1$. After a change of the time variable, we have a solution, $G: [0, T_2] \to H^N$, to \eqref{eq effect} (for $M=0$), such that 
        \begin{align}
            \|G(T_1)\|_{H^N}&=\varepsilon<1\\
            \|G\|_{X^+_{T_2}} &\leq 2 \varepsilon\\
            \|G(\exp(\tau\|v(0)\|^2_{H^N}/\varepsilon^2)+T_1)\|_{H^N}&\geq 2\mathcal{C}\varepsilon
        \end{align}
       where $T_1:= \exp(t_1\|v(0)\|^2_{H^N}/\varepsilon^2)$ and $\varepsilon$ is the the same $\varepsilon$ from the statement of Proposition \ref{prop 6.2}. Setting  $e^{-iT_1\Delta}U(T_1):=G(T_1)$, Lemma \ref{asymptotic} implies that 
       \begin{align}
           \|U(\exp(\tau\|v(0)\|^2_{H^N}/\varepsilon^2)+T_1)\|_{H^N}&\geq 2\mathcal{C}\varepsilon - \varepsilon^3T_1^{-\delta} \geq \mathcal{C} \varepsilon .
       \end{align}
       
    \end{proof}

\appendix 

\section{Proof of Proposition \ref{prop 3.1}}\label{app.Proof3.1}
In this appendix we follow the argument of \cite{HPTV15} to prove Proposition \ref{prop 3.1}. We begin as the reference does with the high-frequency estimates.

The different norms used in this section in comparison with the norms used in \cite{HPTV15} does not change essence of the following proofs because the regularity of all norms in this appendix depend on $d$ in such a way as to preserve the relationships that exist in \cite{HPTV15} between the various norms. See, for example, equation \eqref{eq 14}.
\subsection{The High-frequency Estimates}\label{ssec 3.1}

The proof of the following lemma can be found as the proof of Lemma 3.2 in \cite{HPTV15} with the $Z$-norm replaced by the $Z_d$-norm.  

Throughout the appendix we will refer to the symmetric group  $S_3$, which represents the set of all permutations of a set of three elements.
\begin{lemma}\label{lem 3.2}
Assume that $T \geq 1$. The following estimates hold uniformly in $T$:
\begin{align}
& \norm{\sum_{\substack{A,B,C \\ \max (A,B,C) \geq T^{\frac{1}{6}}}} \mathcal{N}^t [Q_AF, Q_BG, Q_C H]}_{Z_d} \lesssim T^{-\frac{7}{6}} \norm{F}_{S} \norm{G}_{S} \norm{H}_{S} , \quad t \geq\tfrac{T}{4} , \\
& \norm{\sum_{\substack{A,B,C \\ \max (A,B,C) \geq T^{\frac{1}{6}}}} \int_{\R} q_T (t) \mathcal{N}^t [Q_AF(t), Q_BG(t), Q_C H(t)] \, dt }_{S} \lesssim T^{-\frac{1}{50}} \norm{F}_{X_T} \norm{G}_{X_T} \norm{H}_{X_T}, \\
& \norm{\sum_{\substack{A,B,C \\ \max (A,B,C) \geq T^{\frac{1}{6}}}} \int_{\R} q_T (t) \mathcal{N}^t [Q_AF(t), Q_BG(t), Q_C H(t)] \, dt }_{S^+} \lesssim T^{-\frac{1}{50}} \norm{F}_{X_T^+} \norm{G}_{X_T^+} \norm{H}_{X_T^+}  . 
\end{align}
\end{lemma}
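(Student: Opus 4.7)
The plan is to mimic the argument of Lemma 3.2 in \cite{HPTV15}, with the $Z$-norm systematically replaced by $Z_d$ throughout. The symmetric structure of $\mathcal{N}^t$ in its three arguments (up to complex conjugation on the middle slot) allows one to assume that $A = \max(A, B, C) \geq T^{1/6}$; after summing in $B, C \leq A$ using the convolution-type inequality \eqref{eq Young} (which costs at most logarithmic factors), the problem is reduced to bounding a single dyadic piece where the $x$-frequency of the first factor is concentrated at scale $A \geq T^{1/6}$.

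To obtain the pointwise-in-time estimate in the $Z_d$-norm, I would work fiberwise in $\xi \in \mathbb{R}$. Writing
\begin{align*}
\mathcal{N}^t[F, G, H] = e^{-it \Delta}\bigl(e^{it\Delta} F \cdot \overline{e^{it\Delta} G} \cdot e^{it\Delta} H\bigr)
\end{align*}
and using the algebra property of $h_p^{d/2+}$, the estimate reduces pointwise in $\xi$ to a H\"older-type inequality combined with the one-dimensional dispersive bound $\|e^{it\partial_{xx}} f\|_{L^\infty_x} \lesssim |t|^{-1/2}\|f\|_{L^1_x}$ applied to the $Q_A$-projected factor and $L^2_x$-isometry applied to the other two. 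The $L^1_x$-norm is controlled via $\|f\|_{L^1} \lesssim \|f\|_{L^2}^{1/2}\|xf\|_{L^2}^{1/2}$, which is in turn dominated by $\|f\|_Y$. The high-frequency cutoff $Q_A$ produces a gain $A^{-\alpha} \lesssim T^{-\alpha/6}$ for any $\alpha < 3N/4$ from the $H^{3N/4}$-component of the $Y$-norm. Since $N \geq 10d$, $\alpha$ can be chosen large enough that the product $|t|^{-1/2} A^{-\alpha} \lesssim T^{-1/2} T^{-\alpha/6}$ comfortably beats $T^{-7/6}$, as required.

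For the two space-time estimates I would integrate this pointwise bound against $q_T$ on $[T/4, T]$, which costs at most a factor $T^{1 + O(\delta)}$ once one invokes the definitions of the $X_T$ and $X_T^+$ norms, in particular $\|F(t)\|_{S} \lesssim (1+|t|)^\delta \|F\|_{X_T}$. To upgrade the target norm from $Z_d$ to $S$ (resp.\ $S^+$), I would apply the Leibniz rule for $\mathcal{N}^t$,
\begin{align*}
Z\mathcal{N}^t[F,G,H] = \mathcal{N}^t[ZF,G,H] + \mathcal{N}^t[F,ZG,H] + \mathcal{N}^t[F,G,ZH], \qquad Z \in \{ix, \partial_x, \partial_{y_j}\},
\end{align*}
and iterate; arranging for as many derivatives as possible to land on the $Q_A$-factor means that the $A^{-\alpha}$ gain can still be invoked. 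The net loss is of order $T^{-1/6 + O(\delta)}$, which is much smaller than the claimed $T^{-1/50}$.

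The main obstacle will be the bookkeeping required when the Leibniz rule routes $Z$ onto a low-frequency factor: one must verify that in every such term enough of the remaining derivatives still fall on the $Q_A$-factor to produce the necessary decay in $A$. Because $N \geq 10d$ provides substantial slack in the regularity hierarchy, no genuinely difficult issue arises here, and the argument proceeds as in \cite{HPTV15} with only the bookkeeping constants modified to account for the $h_p^{d/2+}$ algebra constant (which depends on $d$ but not on the solutions).
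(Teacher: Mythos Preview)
Your proposal is correct and takes essentially the same approach as the paper: the paper's proof consists of a single sentence stating that the argument is that of Lemma 3.2 in \cite{HPTV15} with the $Z$-norm replaced by the $Z_d$-norm, which is exactly what you propose. Your additional sketch of how that argument runs (symmetry reduction to $A=\max(A,B,C)$, dispersive gain on the high-frequency factor, $A^{-\alpha}$ gain from the $H^{3N/4}_x$ component of $Y$, and Leibniz-rule upgrade to $S$ and $S^+$) is faithful to the structure of the original HPTV15 proof.
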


\subsection{The Fast Oscillations}\label{ssec 3.2}
\begin{lemma}\label{lem 3.3}
For $T \geq 1$, assume that $F, G, H: \R \to S$ satisfy \eqref{eq 3.2} and 
\begin{align}
F = Q_{\leq T^{\frac{1}{6}}} F, \quad G = Q_{\leq T^{\frac{1}{6}}} G, \quad H =Q_{\leq T^{\frac{1}{6}}} H .
\end{align}
Then, for $t \in [\frac{T}{4} , T]$, we can write
\begin{align}
\widetilde{\mathcal{N}}^t [F(t) , G(t) , H(t)] = \mathcal{E}_{1,M}^t + \mathcal{E}_{2,M}^t,
\end{align}
where it holds that, uniformly in $T \geq 1$ and $M>0$,
\begin{align}
T^{1 + 2\delta} \sup_{\frac{T}{4} \leq t \leq T} \norm{\mathcal{E}_{1,M}^t}_{S} \lesssim 1 , \label{eq 15}\\
T^{\frac{1}{10}} \sup_{\frac{T}{4} \leq t \leq T} \norm{\mathcal{E}_{3,M}^t}_{S} \lesssim M, \\
T^{1 + 2\delta} \sup_{\frac{T}{4} \leq t \leq T} \norm{\mathcal{E}_{err, M}^t}_{S} \lesssim 1 .
\end{align}
 where $\mathcal{E}_{2, M}^t= \partial_t\mathcal{E}^t_{3, M}- \mathcal{E}^t_{err}$.  Assuming in addition that \eqref{eq 3.4} holds, we have
\begin{align}
T^{1 + 2\delta} \sup_{\frac{T}{4} \leq t \leq T} \norm{\mathcal{E}_{1,M}^t}_{S^+} \lesssim 1 ,\\
T^{\frac{1}{10}} \sup_{\frac{T}{4} \leq t \leq T} \norm{\mathcal{E}_{3,M}^t}_{S^+} \lesssim M,\\
T^{1 + 2\delta} \sup_{\frac{T}{4} \leq t \leq T} \norm{\mathcal{E}_{err, M}^t}_{S^+} \lesssim 1 .
\end{align}
\end{lemma}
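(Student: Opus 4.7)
The proof adapts the strategy of Lemma 3.3 in \cite{HPTV15} to our quasi-resonant setting. Two features are new: first, the sum in $\omega$ is restricted to $|\omega|\geq 1/M$ rather than to integer levels, which is what produces the factor $M$ in \eqref{eq E3M}; second, the base norm sits at regularity $d/2+$ rather than $1$, which is harmless because $h^{d/2+}$ is an algebra. The proof decomposes naturally into one bound per error piece.

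For $\mathcal{E}_{1,M}^t$, the cutoff $1-\phi(t^{1/4}\eta\kappa)$ in $\mathcal{O}_1^t$ restricts attention to $|\eta\kappa|\gtrsim t^{-1/4}$, where the phase $e^{2it\eta\kappa}$ is non-stationary. I would integrate by parts alternately in $\eta$ and $\kappa$ via $e^{2it\eta\kappa}=(2it\kappa)^{-1}\partial_\eta e^{2it\eta\kappa}$ and its analogue, each application gaining a factor $(t|\eta\kappa|)^{-1}\leq t^{-3/4}$. The $\eta$- or $\kappa$-derivative produced either lands on one of $\widehat{F}_q$, $\widehat{G}_r$, $\widehat{H}_s$ (becoming multiplication by $x$, controlled by the $\|xF\|_{L^2_x}$ piece of the $S$-norm or the $\|xF\|_S$ piece of $S^+$), or on the cutoff itself, whose successive derivatives remain uniformly bounded on the support. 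The high-frequency hypothesis $Q_{\leq T^{1/6}}F=F$ (and similarly for $G,H$) handles all remaining $\xi$-derivatives. A few iterations, combined with the trilinear Young-type inequality \eqref{eq Young} applied in the momentum variable, then yield the claimed $T^{-(1+2\delta)}$ decay; the $S^+$ version is identical upon adding one or two derivatives and an extra $x$-weight.

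For $\mathcal{E}_{3,M}^t$, the prefactor $(i\omega)^{-1}$ combined with $|\omega|\geq 1/M$ contributes a single gain of $M$, which is precisely what appears on the right of \eqref{eq E3M}. The remaining operator $\mathcal{O}_2^t$ is supported where $|\eta\kappa|\leq t^{-1/4}$, so that a direct Young estimate in the $\xi$-variable, together with \eqref{eq Young} in the momentum and the algebra property of $h^{d/2+}$, produces the $t^{-1/10}$ rate exactly as in the corresponding estimate of \cite{HPTV15}. For $\mathcal{E}_{err,M}^t$, I would compute $\partial_t\mathcal{E}_{3,M}^t$: the $\partial_t$ falling on $e^{it\omega}/(i\omega)$ cancels the $(i\omega)^{-1}$ and reproduces $\mathcal{E}_{2,M}^t$ exactly, leaving only the terms where $\partial_t$ hits either the phase cutoff $\phi(t^{1/4}\eta\kappa)$ or one of $F,G,H$. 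The first kind enjoys an $O(t^{-1})$ gain from the cutoff derivative combined with the narrowed support $|\eta\kappa|\sim t^{-1/4}$; the second kind is controlled using $\|\partial_t F\|_S\lesssim (1+|t|)^{-(1-3\delta)}$ from Definition \ref{defn XT}. Summing, both contributions are integrable at rate $t^{-(1+2\delta)}$, uniformly in $M$.

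The principal subtlety of the argument is to confine the $M$-dependence strictly to the $\mathcal{E}_{3,M}^t$ estimate. In the $\mathcal{E}_{err,M}^t$ bound one must verify that differentiating $e^{it\omega}/(i\omega)$ trades the $1/\omega$ for $1$, so the residual error terms do not inherit the small-divisor loss; this is the reason both $\mathcal{E}_{1,M}^t$ and $\mathcal{E}_{err,M}^t$ are bounded independently of $M$. That point is essential because $\mathcal{E}_{err,M}^t$ feeds into the closing bootstrap of Proposition \ref{prop 6.2}, where $M=M(t)$ is chosen to grow (slowly) with $t$, so that only the $t^{-1/10}M$ piece from $\mathcal{E}_{3,M}^t$ needs to be balanced against extra decay. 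The $S^+$ versions follow line-by-line upon using $S^+$ and $X_T^+$ in place of $S$ and $X_T$ throughout.
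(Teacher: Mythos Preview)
Your proposal is correct and aligns with the paper's approach: non-stationary phase (packaged as Lemma \ref{lem 3.6}) for $\mathcal{E}_{1,M}^t$, the bound \eqref{eq 3.20} on $\mathcal{O}_2^t$ combined with $|\omega|^{-1}\leq M$ for $\mathcal{E}_{3,M}^t$, and the product-rule identity of Remark \ref{remark.Eerror} for $\mathcal{E}_{err,M}^t$, with the $S^+$ statements following identically. The only imprecision is that the $t^{-1+}$ decay for $\mathcal{O}_2^t$ in \eqref{eq 3.20} is obtained via the dispersive estimate $\|e^{it\partial_{xx}}f\|_{L^\infty_x}\lesssim t^{-1/2}\|f\|_{L^1_x}$ in physical space rather than a Young inequality in the Fourier variable $\xi$, but this is cosmetic and your reference to \cite{HPTV15} covers it.
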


The proof of Lemma \ref{lem 3.3} follows the statement of Lemma \ref{lem 3.6}. We begin with the following remark.

\begin{remark}\label{remark.Eerror} 

Let $M >0$. Recall first we defined $\mathcal{O}_1$ and $\mathcal{O}_2$ in \eqref{eq O}. 
Then recall that we also defined $\mathcal{E}_{3,M}^t$ by 
\begin{align}\label{eq E3}
\mathcal{F} \mathcal{E}_{3,M}^t (\xi , p) : = \sum_{\abs{\omega} \geq \frac{1}{M}} \sum_{(p,q,r,s) \in \Gamma_{\omega}} \frac{e^{it\omega}}{i\omega} \mathcal{O}_2^t [F_q, G_r, H_s] (\xi) 
\end{align}
and we defined $\mathcal{E}^t_{2, M}$ by
    \begin{align*}
        \mathcal{F}  \mathcal{E}_{2,M}^t  (\xi , p)   : =  \sum_{\abs{\omega} \geq \frac{1}{M}}  \sum_{(p,q,r,s) \in \Gamma_{\omega}} e^{it\omega }\mathcal{O}_2^t [F_q, G_r, H_s] (\xi).
    \end{align*}
    By product rule of differentiation we have 
\begin{align}
e^{it \omega} &\mathcal{O}_{2}^t [f,g,h] \\
&= \partial_t \parenthese{\frac{e^{it\omega}}{i \omega} \mathcal{O}_2^t [f,g,h]} - e^{it\omega} (\partial_t \mathcal{O}_2^t) [ f,g,h] - e^{it\omega} \mathcal{O}_2^t [\partial_t f, g,h] - e^{it\omega} \mathcal{O}_2^t [ f, \partial_t g,h]  - e^{it\omega} \mathcal{O}_2^t [ f, g,\partial_t h] ,
\end{align}
which implies
\begin{align}
\partial_t \mathcal{F} \mathcal{E}_{3,M}^t & = \sum_{\abs{\omega} \geq \frac{1}{M}} \sum_{(p,q,r,s) \in \Gamma_{\omega}}  \left\{ e^{it \omega} \mathcal{O}_2^t [F_q,G_r,H_s]  \right. \\
&  \hspace{.3cm}+  \left.  e^{it\omega} (\partial_t \mathcal{O}_2^t) [ F_q,G_r,H_s] +  e^{it\omega} \mathcal{O}_2^t [\partial_t F_q, G_r,H_s] + e^{it\omega} \mathcal{O}_2^t [ F_q, \partial_t G_r,H_s]  +  e^{it\omega} \mathcal{O}_2^t [ F_q, G_r,\partial_t H_s]  \right\} \\
& = :  \mathcal{F}  \mathcal{E}_{2,M}^t + \mathcal{F} \mathcal{E}_{err, M}^t   ,
\end{align}
where we define 
\begin{align}\label{eq 22}
\begin{aligned}
 \mathcal{F} \mathcal{E}_{err, M}^t  &:= \sum_{\abs{\omega} \geq \frac{1}{M}} \sum_{(p,q,r,s) \in \Gamma_{\omega}}   \left\{ e^{it\omega} (\partial_t \mathcal{O}_2^t) [ F_q,G_r,H_s] +  e^{it\omega} \mathcal{O}_2^t [\partial_t F_q, G_r,H_s]   \right\}  \\
 &\hspace{.5cm}+  \sum_{\abs{\omega} \geq \frac{1}{M}} \sum_{(p,q,r,s) \in \Gamma_{\omega}} \left\{e^{it\omega} \mathcal{O}_2^t [ F_q, \partial_t G_r,H_s]  +  e^{it\omega} \mathcal{O}_2^t [ F_q, G_r,\partial_t H_s] \right\} . 
 \end{aligned}
\end{align}

We remark  that the term $\mathcal{E}_{err, M}^t$ behaves nicer than $\mathcal{E}_{2,M}^t $, since $\partial_t F_q$, $\partial_t G_r$ and $\partial_s H$ have better decay than $F$, $G$ and $H$ which ultimately comes from our choice of unknowns as pullbacks of nonlinear solutions by the linear flow.

\end{remark}

\begin{remark}
If for $t \geq T/4$, let 
\begin{align}
f^a = Q_{\leq T^{\frac{1}{6}}} f^a , \quad f^b = Q_{\leq T^{\frac{1}{6}}} f^b, \quad f^c = Q_{\leq T^{\frac{1}{6}}} f^c  ,
\end{align}
then we have the following estimates from \cite{HPTV15} that exploit the dispersion in the $\mathbb{R}$ direction
\begin{align}
\norm{\mathcal{O}_2^t [f^a, f^b ,f^c]}_{L_{\xi}^2} 
& \lesssim (1 + \abs{t})^{-1+\frac{\delta}{100}} \min_{\sigma \in S_3} \norm{f^{\sigma(a)}}_{L_x^2}  \norm{f^{\sigma(b)}}_{Y}  \norm{f^{\sigma(c)}}_{Y}   \label{eq 3.20} ,\\
\norm{\mathcal{O}_1^t [f^a, f^b ,f^c] + \mathcal{O}_2^t[f^a, f^b ,f^c]}_{L_{\xi}^2} & \lesssim (1 + \abs{t})^{-1} \min_{\sigma \in S_3} \norm{f^{\sigma(a)}}_{L_x^2}  \norm{f^{\sigma(b)}}_{Y}  \norm{f^{\sigma(c)}}_{Y} ,
\end{align}
which then implies 
\begin{align}
\norm{\mathcal{O}_1^t [f^a, f^b ,f^c]}_{L_{\xi}^2} 
& \lesssim (1 + \abs{t})^{-1+\frac{\delta}{100}} \min_{\sigma \in S_3} \norm{f^{\sigma(a)}}_{L_x^2}  \norm{f^{\sigma(b)}}_{Y}  \norm{f^{\sigma(c)}}_{Y} .
\end{align}

\end{remark}

The proof of the following lemma follows from the preceding remark using the same argument for the equivalent estimate (Lemma 3.6) in \cite{HPTV15}.

\begin{lemma}\label{lem 3.6}
Assume that $ f^a , f^b, f^c$ satisfy for $t \geq T/4$
\begin{align}
f^a = Q_{A} f^a , \quad f^b = Q_{B} f^b, \quad f^c = Q_{C} f^c  , \quad \max (A, B ,C) \leq T^{\frac{1}{6}} .
\end{align}
Then
\begin{align}
\norm{\mathcal{O}_1^t [f^a, f^b ,f^c]}_{L_{\xi}^2}  \lesssim T^{- \frac{201}{200}} \min_{\sigma \in S_3} \norm{f^{\sigma(a)}}_{L_x^2}  \norm{f^{\sigma(b)}}_{Y}  \norm{f^{\sigma(c)}}_{Y}  .
\end{align}
\end{lemma}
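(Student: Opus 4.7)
The plan is to improve upon the baseline estimate $\|\mathcal{O}_1^t + \mathcal{O}_2^t\|_{L^2_\xi} \lesssim (1+|t|)^{-1} \min_{\sigma \in S_3}\|f^{\sigma(a)}\|_{L^2_x}\|f^{\sigma(b)}\|_Y\|f^{\sigma(c)}\|_Y$ recalled just above the statement, by exploiting the rapid oscillation of the phase on the support of $\mathcal{O}_1^t$. The cutoff $1 - \phi(t^{1/4}\eta\kappa)$ localises the integration to the region where $|\eta\kappa| \gtrsim t^{-1/4}$, so the total phase satisfies $|2t\eta\kappa| \gtrsim t^{3/4}$, the regime in which integration by parts will produce genuine decay beyond the dispersive ceiling $t^{-1}$.

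First, introduce a smooth symmetric partition of unity $1 = \chi_I(\eta,\kappa) + \chi_{II}(\eta,\kappa)$, with $\chi_I$ supported in $\{|\kappa| \geq |\eta|/2\}$ and $\chi_{II}$ in the reflected set, to isolate the singularity of $1/\kappa$ (respectively $1/\eta$) that will appear after integration by parts. On $\mathrm{supp}(\chi_I) \cap \mathrm{supp}(1-\phi(t^{1/4}\eta\kappa))$ one has $|\kappa|^2 \geq |\eta\kappa|/2 \gtrsim t^{-1/4}$, hence $|\kappa| \gtrsim t^{-1/8}$. In this region integrate by parts in $\eta$ via $e^{2it\eta\kappa} = (2it\kappa)^{-1}\partial_\eta e^{2it\eta\kappa}$, producing four error terms according to whether the derivative lands on (i) the fast-oscillation cutoff $1-\phi(t^{1/4}\eta\kappa)$, (ii) the partition cutoff $\chi_I$, (iii) $\hat{f}(\xi-\eta)$, or (iv) $\overline{\hat{g}}(\xi - \eta - \kappa)$. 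In case (i) the derivative yields $\mp t^{1/4}\kappa\phi'(t^{1/4}\eta\kappa)$, which cancels the $\kappa$ in the denominator and restricts to the thin annulus $|\eta\kappa| \sim t^{-1/4}$; in case (ii) the factor $\partial_\eta \chi_I = O(|\eta|^{-1})$ combines with $(2it\kappa)^{-1}$ into $O((t|\eta\kappa|)^{-1}) = O(t^{-3/4})$; in cases (iii)--(iv) the derivative replaces $\hat{f}$ by $\widehat{ixf}$ (respectively $\hat{g}$ by $\widehat{ixg}$), whose $L^2$-norms are controlled by the $Y$-norm via the inequality \eqref{defn Y} and the interpolation $\|g\|_{L^1}\lesssim \|g\|_{L^2}^{1/2}\|xg\|_{L^2}^{1/2}$ recalled in Section~\ref{sec.preliminary}.

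Each of the four pieces is an oscillatory trilinear form of the same type as $\mathcal{I}^t$ modulated by a bounded symbol, and is estimated by the standard dispersive argument for $\mathcal{I}^t$ in physical space: representing $\mathcal{I}^t[F,G,H] = \mathcal{U}(-t)(\mathcal{U}(t)F \cdot \overline{\mathcal{U}(t)G} \cdot \mathcal{U}(t)H)$ with $\mathcal{U}(t) = e^{it\partial_{xx}}$, and applying the $L^1 \to L^\infty$ decay $\|\mathcal{U}(t)G\|_{L^\infty} \lesssim t^{-1/2}\|G\|_{L^1}$ on two of the factors. Combining the baseline $t^{-1}$ decay of the trilinear form with the multiplier gain $(2it\kappa)^{-1} = O(t^{-7/8})$ in cases (iii)--(iv), and with $O(t^{-3/4})$ (together with the small-measure restriction to $|\eta\kappa| \sim t^{-1/4}$, which is bounded using the frequency localisation $\max(A,B,C) \leq T^{1/6}$) in cases (i)--(ii), yields a total decay strictly stronger than $T^{-1-1/200}$. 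Region II is treated symmetrically by integration by parts in $\kappa$.

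The principal obstacle will be justifying that the integration by parts and the insertion of the singular multiplier $(2it\kappa)^{-1}\chi_I\psi$ (and its variants) still permit the use of the physical-space dispersive representation, and in particular that the terms from (i) and (ii) supported on the thin sets $\{|\eta\kappa| \sim t^{-1/4}\}$ and $\{|\eta| \sim |\kappa|\}$ can be bounded without sacrificing the $Y$-norm structure on the right-hand side. This is where the hypothesis $\max(A,B,C) \leq T^{1/6}$ enters critically: it confines the effective support of $(\eta,\kappa)$, for each fixed $\xi$, to a cube of sidelength $O(T^{1/6})$, which lets one pay a small polynomial cost for the thin-set restrictions and still beat the target exponent.
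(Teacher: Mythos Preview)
The paper gives no independent argument for this lemma; it simply defers to Lemma~3.6 of \cite{HPTV15}. Your integration-by-parts scheme, with the $\chi_I/\chi_{II}$ splitting according to the larger of $|\eta|,|\kappa|$, is indeed the standard route and matches what \cite{HPTV15} does in spirit. Two points in the sketch, however, do not go through as written.

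First, in cases (iii)--(iv) you assert that $\|\widehat{ixf}\|_{L^2}=\|xf\|_{L^2_x}$ is ``controlled by the $Y$-norm''. It is not: by definition $\|f\|_Y=\|\langle x\rangle^{9/10}f\|_{L^2_x}+\|f\|_{H^{3N/4}_x}$, and the weight exponent $9/10<1$ is strictly too small to dominate a full power of $x$. The interpolation inequality $\|g\|_{L^1}\lesssim\|g\|_{L^2}^{1/2}\|xg\|_{L^2}^{1/2}$ that you invoke runs in the wrong direction for this purpose. Since a single integration by parts costs exactly one factor of $x$, and the target gain over the baseline is only $T^{-1/200}$, the argument must be organised so that the full $x$-weight never has to be absorbed by the $Y$-norm alone; your bookkeeping does not reflect this.

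Second --- and you do flag this as ``the principal obstacle'' --- the step ``each piece is an oscillatory trilinear form of the same type as $\mathcal{I}^t$ modulated by a bounded symbol, and is estimated by the standard dispersive argument in physical space'' is where the real work lies, and the sketch does not do it. The physical-space identity $\mathcal{I}^t=\mathcal{U}(-t)\bigl(\mathcal{U}(t)f\cdot\overline{\mathcal{U}(t)g}\cdot\mathcal{U}(t)h\bigr)$ is precisely what delivers the $t^{-1}$ decay, and that identity is destroyed once an $(\eta,\kappa)$-dependent multiplier such as $(2it\kappa)^{-1}\chi_I(\eta,\kappa)\bigl(1-\phi(t^{1/4}\eta\kappa)\bigr)$ is inserted. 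One cannot bound the multiplier in $L^\infty$ and then quote the $\mathcal{I}^t$ estimate on the rest; the multiplier must be handled explicitly, either by a Fourier expansion that reduces matters to a superposition of genuine $\mathcal{I}^t$-type operators, or by a direct Fourier-side estimate that re-derives the dispersive gain. You identify this as the crux but give no mechanism to resolve it.
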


\begin{proof}[Proof of Lemma \ref{lem 3.3}]
One first addresses the term $\mathcal{E}_{1,M}^t$
\begin{align}
\norm{\sum_{\abs{\omega} \geq \frac{1}{M}} \sum_{(p,q,r,s) \in \Gamma_{\omega}} e^{it \omega} \mathcal{O}_1^t [F_q^a, F_r^b, F_s^c]}_{L_{\xi , p}^2} \leq T^{- \frac{201}{200}} \min_{\sigma \in S_3} \norm{F^{\sigma(a)}}_{L_{x,y}^2} \norm{F^{\sigma(b)}}_{S} \norm{F^{\sigma(c)}}_{S} .
\end{align}

By Plancherel,  Lemma \ref{lem 3.6}, 
\begin{align}
 \quad T^{1+ 2 \delta} \sup_{\frac{T}{4} \leq t \leq T} \norm{\mathcal{E}_{1,M}^t}_{H_{x,y}^N} 
& \lesssim  T^{1 + 2 \delta} \max_{\sigma \in S_3} \norm{  \sum_{\abs{\omega} \geq \frac{1}{M}} \sum_{(p,q,r,s) \in \Gamma_{\omega}} \abs{p}^N \norm{ e^{it \omega} \mathcal{O}_1^t [F_q^{\sigma(a)}, F_r^{\sigma(b)}, F_s^{\sigma(c)}]}_{H_{\xi }^N}  }_{\ell_p^2} \\
& \lesssim T^{2 \delta - \frac{201}{200}}  \norm{ \sum_{(p,q,r,s) \in \mathcal{M}} \abs{p}^N  \norm{F_q^a}_{H_x^N} \norm{F_r^b}_{Y} \norm{F_s^c}_{Y}  }_{\ell_p^2}  \\
& \lesssim T^{2 \delta - \frac{201}{200}} \norm{F^a}_{H_{x,y}^N} \norm{F^b}_{S} \norm{F^c}_{S} \\
& \lesssim T^{-\delta - \frac{201}{200}} \norm{F^a}_{X_T} \norm{F^b}_{X_T} \norm{F^c}_{X_T} \\
& \lesssim 1 .
\end{align}
When distributing  derivatives, we could assume that $\abs{q} \geq \abs{r} \geq \abs{s}$, hence we only consider the case when derivatives hit $F_q^a$.

\begin{align}
 \quad T^{1+ 2 \delta} \sup_{\frac{T}{4} \leq t \leq T} \norm{x \mathcal{E}_{1,M}^t}_{L_{x,y}^2} &= T^{1+ 2 \delta} \sup_{\frac{T}{4} \leq t \leq T}  \norm{\mathcal{F}( x \mathcal{E}_{1,M}^t ) }_{L_{\xi , p}^2}  \\
& \lesssim  T^{1+ 2 \delta} \sup_{\frac{T}{4} \leq t \leq T}  \norm{\sum_{\abs{\omega} \geq \frac{1}{M}} \sum_{(p,q,r,s) \in \Gamma_{\omega}} e^{it \omega}  \mathcal{O}_1^t [\partial_{\xi} F_q^{a} , F_r^b, F_s^c]}_{L_{\xi , p}^2}  \\
& \lesssim T^{ 2 \delta -\frac{201}{200}}   \norm{\sum_{(p,q,r,s) \in \mathcal{M}}   \norm{\partial_{\xi} F_q^a }_{L_{\xi}^2}  \norm{ F_r^b }_{Y}  \norm{ F_s^c }_{Y}  }_{\ell_p^2} \\
& \lesssim T^{ 2 \delta -\frac{201}{200}}  \norm{xF^a }_{L_{x,y}^2}  \norm{ F^b }_{S}  \norm{ F^c }_{S} \\
& \lesssim T^{-\delta - \frac{201}{200}} \norm{F^a}_{X_T} \norm{F^b}_{X_T} \norm{F^c}_{X_T} \\
& \lesssim 1 .
\end{align}
The two inequalities above imply the bound on the $S$-norm of $\mathcal{E}^t_{1, M}$ found in \eqref{eq 15}.

The estimate on $\mathcal{E}^t_{3, M}$ follows by first demonstrating an estimate on $\mathcal{O}_2^t$. Define
\begin{align}
\widetilde{\mathcal{O}}_{2, \omega}^t [F, G, H] (\xi , p) := \sum_{ (p,q,r,s) \in \Gamma_{\omega}} \mathcal{O}_2^t [F_q , G_r, H_s] (\xi)  .
\end{align}

Proceeding with a duality argument, let $K \in L_{\xi , p}^2 (\R \times \Z^d)$,
\begin{align}
\inner{K, \widetilde{\mathcal{O}}_{2, \omega}^t [F, G, H] }_{L_{\xi , p}^2 \times L_{\xi , p}^2}  & \leq \sum_{(p,q,r,s) \in \Gamma_{\omega}}  \abs{ \inner{ K_p , \mathcal{O}_2^t[F_q ,G_r, H_s]}_{L_{\xi}^2 \times L_{\xi}^2}} \\
& \lesssim (1 + \abs{t})^{-1+ \delta} \sum_{(p,q,r,s) \in \Gamma_{\omega}} \norm{K_p}_{L_{\xi}^2}    \norm{F_q}_{L_x^2} \norm{G_r}_{Y}  \norm{H_s}_{Y}  . 
\end{align}
Hence 
\begin{align}
& \quad \inner{K, \widetilde{\mathcal{O}}_{2, \omega}^t [F, G, H] }_{L_{\xi , p}^2 \times L_{\xi , p}^2}  \\
& \lesssim  (1 + \abs{t})^{-1 + \delta} \sum_{(p,q,r,s) \in \Gamma_{\omega}} \norm{K_p}_{L_{\xi}^2} \min \bracket{  \norm{F_q}_{L_{x}^2} \norm{G_r}_{Y} \norm{H_s}_{Y} , \norm{F_q}_{Y} \norm{G_r}_{L_{x}^2} \norm{H_s}_{Y}  , \norm{F_q}_{Y} \norm{G_r}_{Y} \norm{H_s}_{L_{x}^2} }  . 
\end{align}

Summing over $\omega$ and using Cauchy-Schwarz in the $p$ variable with \eqref{eq Young}, we get
\begin{align}
\inner{\mathcal{F}^{-1} K, \mathcal{E}_{2,M}^t}_{L_{x,y}^2 \times L_{x,y}^2} & = \inner{K, \sum_{|\omega|\geq \tfrac{1}{M} } e^{it\omega}  \widetilde{\mathcal{O}}_{2, \omega}^t [F^a, F^b, F^c]}_{L_{\xi,p}^2 \times L_{\xi,p}^2}\\
& \lesssim (1 + \abs{t})^{-1 + \delta} \left(\sum_{p \in \mathbb{Z}^d}\norm{K_p}_{L_{\xi}^2}^2 \right)^{\frac{1}{2}}  \min_{\sigma \in S_3}\norm{ \sum_{(p,q,r,s) \in \mathcal{M}}\norm{F^{\sigma(a)}_q}_{L_{x}^2}\norm{F^{\sigma(b)}_r}_{Y} \norm{F^{\sigma(c)}_s}_{Y} }_{\ell^2_p}\\
& \lesssim (1 + \abs{t})^{-1 + \delta}  \norm{K}_{L^2_{\xi, p}}  \min_{\sigma \in S_3} \left(\sum_{q \in \mathbb{Z}^d} \norm{F^{\sigma(a)}_q}^2_{L_{x}^2}\right)^{\frac{1}{2}}\left(\sum_{r \in \mathbb{Z}^d}\norm{F^{\sigma(b)}_r}_{Y}\right) \left(\sum_{s \in \mathbb{Z}^d}\norm{F^{\sigma(c)}_s}_{Y} \right)\\
& \lesssim (1 + \abs{t})^{-1 + \delta}  \norm{K}_{L_{\xi, p}^2} \min_{\sigma \in S_3} \norm{F^{\sigma(a)}}_{L_{x,y}^2}  \norm{F^{\sigma(b)}}_{S} \norm{F^{\sigma(c)}}_{S}  .
\end{align}
where the last step follows from inequality \eqref{defn Y}. Hence
\begin{align}
 \norm{\sum_{\omega } e^{it\omega}  \widetilde{\mathcal{O}}_{2, \omega}^t [F^a, F^b, F^c] }_{L_{\xi , p}^2} &\lesssim (1 + \abs{t})^{-1 + \delta}  \min_{\sigma \in S_3} \norm{F^{\sigma(a)}}_{L_{x,y}^2}  \norm{F^{\sigma(b)}}_{S} \norm{F^{\sigma(c)}}_{S}.
\end{align}

We want to show that
\begin{align}
T^{\frac{1}{10}} \sup_{\frac{T}{4} \leq t \leq T} \norm{\mathcal{E}_{3,M}^t}_{S} \lesssim M   .
\end{align}

Again, we proceed with the same duality argument. For $K \in L_{\xi , p}^2 (\R \times \Z^d)$,
\begin{align}
\inner{\mathcal{F}^{-1} K, \mathcal{E}_{3,M}^t}_{L_{x,y}^2 \times L_{x,y}^2} & = \inner{ K , \mathcal{F} \mathcal{E}_{3,M}^t}_{L_{\xi , p}^2 \times L_{\xi , p}^2} \\
& \leq \sum_{\abs{\omega} \geq \frac{1}{M}} \abs{ \inner{  K_p , \sum_{(p,q,r,s) \in \Gamma_{\omega}} \frac{e^{it\omega}}{i\omega} \mathcal{O}_2^t [F_q, G_r, H_s] }} \\
& \lesssim \sum_{\abs{\omega} \geq \frac{1}{M}} \sum_{(p,q,r,s) \in \Gamma_{\omega}} \frac{1}{\abs{\omega}} \abs{ \inner{ K_p , \mathcal{O}_2^t [F_q , G_r , H_s]}} \\
& \lesssim M (1 + \abs{t})^{-1 + \delta} \norm{\mathcal{F}^{-1} K}_{L_{x,y}^2} \norm{F}_{L_{x,y}^2} \norm{G}_{S} \norm{H}_{S} .
\end{align}
Hence
\begin{align}
& \quad \inner{\mathcal{F}^{-1} K, \mathcal{E}_{3,M}^t}_{L_{x,y}^2 \times L_{x,y}^2} \\
& \lesssim M (1 + \abs{t})^{-1 + \delta} \norm{\mathcal{F}^{-1} K}_{L_{x,y}^2} \min \bracket{  \norm{F}_{L_{x,y}^2} \norm{G}_{S} \norm{H}_{S} , \norm{F}_{S} \norm{G}_{L_{x,y}^2} \norm{H}_{S}  , \norm{F}_{S} \norm{G}_{S} \norm{H}_{L_{x,y}^2} }  .
\end{align}
This completes the bound for $\mathcal{E}^t_{3, M}$.

We refer to remark \ref{remark.Eerror} for the conclusion of the estimate for $\mathcal{E}_{err, M}^t$. The $S^+$-norm estimates follow similarly.

\end{proof}

\subsection{The Resonant Level Set}\label{ssec 3.3}

We now want to show that the resonant part of the $\mathcal{N}^t$ nonlinearity, $\Pi^t_M$, converges to the nonlinearity associated to the effective equation, $\tfrac{\pi}{t} \mathcal{R}_M^t$, as time approaches infinity.
\begin{align}
\mathcal{F} \Pi_M^t [F,G,H] (\xi , p)  = \sum_{\abs{\omega} < \frac{1}{M}} \sum_{(p,q,r,s) \in \Gamma_{\omega}} e^{it\omega} ( \mathcal{I}^t [F_{q} , G_r , H_s])^{\wedge} (\xi) .
\end{align}

We recall the $Z_{t, d}$-norm defined by  Definition \ref{defn Z_td}  in the following lemma
\begin{lemma}\label{lem 3.7}
Let $t \geq 1$. Uniformly in $M\geq 0$, it holds that
\begin{align}
\norm{\Pi_M^t [F^a , F^b , F^c]}_{S} & \lesssim (1 + \abs{t})^{-1} \sum_{\sigma \in S_3} \norm{F^{\sigma (a)}}_{Z_{t,d}} \norm{F^{\sigma (b)}}_{Z_{t,d}}  \norm{F^{\sigma (c)}}_{S} , \label{eq 19}\\
\norm{\Pi_M^t [F^a , F^b , F^c]}_{S^+} & \lesssim (1 + \abs{t})^{-1} \sum_{\sigma \in S_3} \norm{F^{\sigma (a)}}_{Z_{t,d}} \norm{F^{\sigma (b)}}_{Z_{t,d}}  \norm{F^{\sigma (c)}}_{S^+}  \\
& \quad + (1 + \abs{t})^{-1+ 2 \delta} \sum_{\sigma \in S_3} \norm{F^{\sigma (a)}}_{Z_{t,d}} \norm{F^{\sigma (b)}}_{S}  \norm{F^{\sigma (c)}}_{S}  \label{eq 20}.
\end{align}
In addition,
\begin{align}
\norm{\Pi_M^t [F,G,H] - \frac{\pi}{t} \mathcal{R}_M^t [F,G,H]}_{Z_d} & \lesssim (1 + \abs{t})^{-1-20\delta} \norm{F}_{S} \norm{G}_{S} \norm{H}_{S} ,\\
\norm{\Pi_M^t [F,G,H] - \frac{\pi}{t} \mathcal{R}_M^t [F,G,H]}_{S} & \lesssim (1 + \abs{t})^{-1-20\delta} \norm{F}_{S^+} \norm{G}_{S^+} \norm{H}_{S^+} .
\end{align}
\end{lemma}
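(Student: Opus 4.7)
The plan is to reduce all three estimates to one-dimensional estimates on the trilinear operator $\mathcal{I}^t[f,g,h]$, and then sum over $p \in \Z^d$ using the algebra property of $h^{d/2+}$ together with the Schur-type bound \eqref{eq Young}. For the first two inequalities, the key input is the 1D dispersive estimate
\begin{align*}
\|\mathcal{I}^t[f,g,h]\|_{L^2_\xi} \lesssim (1+|t|)^{-1}\min_{\sigma \in S_3} \|f^{\sigma(1)}\|_{L^2_x} \|f^{\sigma(2)}\|_Y \|f^{\sigma(3)}\|_Y,
\end{align*}
a consequence of the decay of the 1D Schr\"odinger group combined with the embedding $\|f\|_{L^1} \lesssim \|f\|_{L^2}^{1/2}\|xf\|_{L^2}^{1/2}$. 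Since $\Pi_M^t$ is obtained from $\mathcal{N}^t$ by restricting the outer sum to resonance heights $|\omega|<1/M$, trivial domination shows that any bound on $\mathcal{N}^t$ passes to $\Pi_M^t$, which is why the right-hand sides are independent of $M$ and the estimates hold uniformly.

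First I would distribute derivatives and the $x$-multiplication via the Leibniz rule for $\mathcal{N}^t$, assigning derivatives to the factor controlled by the $S$-norm and relegating the two lighter roles to the factors controlled by $Z_{t,d}$. The emergence of the $Z_{t,d}$-norm is natural: after Cauchy--Schwarz in $p$ using $d/2+$ derivatives, summing over the four-tuples in $\mathcal{M}$ via \eqref{eq Young} requires $\ell^1_p$ control of $\|\widehat{F_p}(\xi)\|$ on two factors, which the $Z_d$ part of $Z_{t,d}$ supplies pointwise in $\xi$, while the subdominant $(1+|t|)^{-1/20}\|F\|_S$ term absorbs the mild loss incurred when one factor must be evaluated in $Y$ rather than in $L^2_x$ and the 1D bound \eqref{eq 3.20} is used with the $\delta/100$-decay. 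The $S^+$ variant \eqref{eq 20} then follows from the same scheme in two cases: derivatives promoting $S$ to $S^+$ either fall on the high-regularity factor (giving the first term on the right) or force one to interpolate between a $Z_{t,d}$-factor and two $S$-factors (giving the second term with the $(1+|t|)^{-1+2\delta}$ weight).

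For the difference $\Pi_M^t - \frac{\pi}{t}\mathcal{R}_M^t$, the main analytic step is stationary phase on the $(\eta,\kappa)$-integral. The phase $\phi(\eta,\kappa) = 2\eta\kappa$ has a single non-degenerate critical point at the origin with Hessian of signature $(1,-1)$ and determinant $-4$; the standard stationary-phase formula yields
\begin{align*}
(\mathcal{I}^t[f,g,h])^\wedge(\xi) = \tfrac{\pi}{t}\,\widehat{f}(\xi)\overline{\widehat{g}}(\xi)\widehat{h}(\xi) + R_t(\xi),
\end{align*}
with $\|R_t\|_{L^2_\xi} \lesssim (1+|t|)^{-1-20\delta}$ times a product of weighted Sobolev norms of $f,g,h$. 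Summing over $(p,q,r,s) \in \Gamma_\omega$ with $|\omega|<1/M$ via \eqref{eq Young}, and distributing derivatives as in the previous paragraph, converts this 1D pointwise statement into the claimed $Z_d$ and $S$ estimates, uniformly in $M$ because the restriction $|\omega|<1/M$ only shrinks the range of summation.

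The main obstacle is quantifying the remainder $R_t$: extracting the $(1+|t|)^{-1-20\delta}$ decay requires repeated integration by parts in $(\eta,\kappa)$, which transfers powers of $t$ onto $\widehat{f}, \widehat{g}, \widehat{h}$, i.e.\ onto $x$-moments and extra derivatives of $f,g,h$. This is precisely why the improved $S$-estimate in the third line demands the stronger $S^+$-norm on the right, while the $Z_d$-estimate is content with the $S$-norm. No small-divisor issue intrudes here since we never divide by $\omega$; the small-divisor phenomenon is confined to the companion estimate on $\mathcal{E}_{3,M}^t$ in Proposition \ref{prop 3.1}.
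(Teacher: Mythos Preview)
Your scheme for \eqref{eq 19} and \eqref{eq 20} does not produce the $Z_{t,d}$-norm on two of the factors. The 1D estimate you invoke,
\[
\|\mathcal{I}^t[f,g,h]\|_{L^2_\xi} \lesssim (1+|t|)^{-1}\|f\|_{L^2_x}\|g\|_Y\|h\|_Y,
\]
outputs $Y$-norms on two factors; summing over $p$ via \eqref{eq Young} then gives $\sum_r\|G_r\|_Y\lesssim\|G\|_S$ by \eqref{defn Y}, so you land on $\|F\|_{L^2_{x,y}}\|G\|_S\|H\|_S$, not $\|F\|_{L^2_{x,y}}\|G\|_{Z_{t,d}}\|H\|_{Z_{t,d}}$. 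Your claim that ``the $Z_d$ part supplies $\ell^1_p$ control pointwise in $\xi$'' is incompatible with the $L^2_\xi$ output of the 1D estimate: once you have taken $L^2_\xi$, there is no pointwise-in-$\xi$ structure left to exploit. The paper's route is genuinely different: it writes $\Pi_M^t$ in \emph{physical space} as $R_M^t$ applied to the sequences $a^{(j)}_p(x)=e^{it\partial_{xx}}F^{(j)}_p(x)$, applies Lemma~\ref{lem 7.1} pointwise in $x$, and then uses Lemma~\ref{lem 7.3} to bound $\sup_x\|a^{(j)}(x)\|_{h^{d/2+}_p}\lesssim\langle t\rangle^{-1/2}\|F^{(j)}\|_{Z_{t,d}}$. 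That lemma (built on the asymptotic $e^{it\partial_{xx}}f(x)\approx ct^{-1/2}e^{-ix^2/4t}\widehat{f}(-x/2t)$) is precisely what ties the $Z_d$-norm to the decay; it has no analogue in your outline.

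For the difference $\Pi_M^t-\tfrac{\pi}{t}\mathcal{R}_M^t$, your stationary-phase picture is the right idea, but you are missing the spatial localisation step. Lemma~\ref{lem 3.10} requires $f(x)=\phi(t^{-1/4}x)f(x)$ to get the clean $t^{-9/8}\|f\|_{L^2}\|g\|_{L^2}\|h\|_{L^2}$ remainder; direct integration by parts in $(\eta,\kappa)$ without localisation produces $1/\kappa$ or $1/\eta$ singularities from the phase gradient. The paper therefore first splits $F=F_c+F_f$ with $F_c(x,y)=\phi(t^{-1/4}x)F(x,y)$, shows via Claim~\ref{claim diff} that the $F_f$-pieces are negligible (this is where the interpolation $\|\cdot\|_{Z_d}\lesssim\|\cdot\|_{L^2}^{1/4}\|\cdot\|_S^{3/4}$ and the extra weight in $S^+$ are spent), and only then applies Lemma~\ref{lem 3.10} to the localised pieces. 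Without this decomposition your remainder bound is not justified.
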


\begin{remark}\label{rmk Replacement}
\begin{enumerate}
\item 
The goal of Lemma \ref{lem 3.7} is to show 
\begin{align}
\Pi_M^t = \frac{\pi}{t} \mathcal{R}_M^t  + O (\abs{t}^{-1 - 2 \delta}) .
\end{align}

\item
It is clear from the proof of Lemma \ref{lem 3.7} that \eqref{eq 19} and \eqref{eq 20} also hold if one replaces $\Pi_M^t [F^a,F^b,F^c]$ by $\frac{\pi}{t} \mathcal{R}_M^t [F^a,F^b,F^c]$.
\end{enumerate}

\end{remark}

We recall Lemma 3.10 from \cite{HPTV15} in the exact same presentation.

\begin{lemma}\label{lem 3.10}
Assume that
\begin{align}
f(x) = \phi (s^{-\frac{1}{4}} x ) f(x) , \quad g(x) = \phi (s^{-\frac{1}{4}} x ) g(x) , \quad h(x) = \phi (s^{-\frac{1}{4}} x ) h(x), 
\end{align}
and that $s \geq 1$. It holds that 
\begin{align}
\abs{\int_{\R^2} e^{2 i s \eta \kappa} \widehat{f} (\xi - \eta) \overline{\widehat{g}} (\xi - \eta - \gamma) \widehat{h} (\xi  - \kappa) \, d \eta d \kappa - \frac{\pi}{s} \widehat{f} (\xi ) \overline{\widehat{g}} (\xi) \widehat{h} (\xi) } \lesssim s^{- \frac{9}{8}} \norm{f}_{L_x^2} \norm{g}_{L_x^2} \norm{h}_{L_x^2} .
\end{align}
In fact, for $\theta \in \N$,
\begin{align}
&\quad \abs{\xi}^{\theta}  \abs{\int_{\R^2} e^{2 i s \eta \kappa} \widehat{f^a} (\xi - \eta) \overline{\widehat{f^b}} (\xi - \eta - \gamma) \widehat{f^c} (\xi  - \kappa) \, d \eta d \kappa - \frac{\pi}{s} \widehat{f^a} (\xi ) \overline{\widehat{f^b}} (\xi) \widehat{f^c} (\xi) } \\
&\lesssim s^{- \frac{9}{8}} \min_{\sigma \in S_3} \norm{f^{\sigma(a)}}_{H_x^{\theta}} \norm{f^{\sigma(b)}}_{L_x^2}  \norm{f^{\sigma(c)}}_{L_x^2} .
\end{align} 
\end{lemma}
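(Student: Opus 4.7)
The plan is to bypass any stationary-phase argument by deriving an exact physical-space representation of $(\mathcal{I}^s[f,g,h])^\wedge(\xi)$ and then estimating the difference from the main term by a pointwise bound. Inserting the Fourier inversion formulas for $\widehat{f}, \widehat{g}, \widehat{h}$ into
\[
(\mathcal{I}^s[f,g,h])^\wedge(\xi) = \int_{\R^2} e^{2is\eta\kappa}\widehat{f}(\xi-\eta)\overline{\widehat{g}}(\xi-\eta-\kappa)\widehat{h}(\xi-\kappa)\,d\eta\,d\kappa
\]
and performing the $\eta$- and $\kappa$-integrals in that order (the $\eta$-integral collapses via $\int e^{i\eta\alpha}\,d\eta = 2\pi\delta(\alpha)$, justified rigorously by a Gaussian regularization), one obtains the closed form
\[
(\mathcal{I}^s[f,g,h])^\wedge(\xi) = \frac{1}{8\pi^2 s}\iiint_{\R^3} e^{i\xi(w-x-z)}\,e^{-i(w-x)(w-z)/(2s)}\,f(x)\,\overline{g(w)}\,h(z)\,dx\,dw\,dz.
\]
Replacing the quadratic phase factor by $1$ makes the three physical integrals separate into one-dimensional Fourier transforms, which, after collecting the $2\pi$ constants, reproduces precisely $\frac{\pi}{s}\widehat{f}(\xi)\overline{\widehat{g}}(\xi)\widehat{h}(\xi)$.

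The remainder
\[
R(\xi) = \frac{1}{8\pi^2 s}\iiint_{\R^3} e^{i\xi(w-x-z)}\bigl[e^{-i(w-x)(w-z)/(2s)}-1\bigr] f(x)\,\overline{g(w)}\,h(z)\,dx\,dw\,dz
\]
is controlled directly: the hypothesis $f=\phi(s^{-1/4}\cdot)f$ (and likewise for $g,h$) confines the integrand to $|x|,|w|,|z|\le 2s^{1/4}$, where $|(w-x)(w-z)/(2s)|\lesssim s^{-1/2}$ and hence $|e^{i\alpha}-1|\lesssim s^{-1/2}$, while Cauchy--Schwarz on each factor gives $\norm{f}_{L^1}\lesssim s^{1/8}\norm{f}_{L^2}$ (likewise for $g,h$). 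Combining,
\[
|R(\xi)|\lesssim s^{-1}\cdot s^{-1/2}\cdot s^{3/8}\,\norm{f}_{L^2}\norm{g}_{L^2}\norm{h}_{L^2} = s^{-9/8}\,\norm{f}_{L^2}\norm{g}_{L^2}\norm{h}_{L^2}.
\]

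For the weighted bound with the prefactor $|\xi|^\theta$, I would integrate by parts $\theta$ times using $\xi^\theta e^{-i\xi x}=i^\theta\partial_x^\theta e^{-i\xi x}$ (or symmetrically in $w$ or $z$, producing the three choices $\sigma\in S_3$), distributing the derivatives by Leibniz between $f(x)$ and the quadratic phase factor. The term with all derivatives on $f$ and phase equal to $1$ Fourier-separates into $\xi^\theta\cdot\frac{\pi}{s}\widehat{f}(\xi)\overline{\widehat{g}}(\xi)\widehat{h}(\xi)$, cancelling the $\xi^\theta$-scaled main term; the analogous term with $[e^{-i\alpha}-1]$ is bounded exactly as in the unweighted case but with $f^{(\theta)}$ in place of $f$, yielding $s^{-9/8}\,\norm{f}_{H^\theta}\norm{g}_{L^2}\norm{h}_{L^2}$; and every term in which at least one derivative lands on the phase factor picks up a pointwise factor $(w-z)/(2s)\lesssim s^{-3/4}$, contributing $O(s^{-11/8})$ or smaller. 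The main obstacle I foresee is the careful Fourier bookkeeping needed to justify the physical-space identity above, in particular handling the distributional $\eta$-integral; once that identity is in hand, the rest of the argument is elementary, and notably no $\xi$-smoothness of $\widehat{f},\widehat{g},\widehat{h}$ is ever invoked, which makes the approach naturally compatible with the $L^2$-based hypotheses and avoids the loss of derivatives that a direct stationary-phase argument would incur.
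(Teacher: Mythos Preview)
Your argument is correct. The paper itself does not prove this lemma; it is simply quoted from \cite{HPTV15} (see the sentence ``We recall Lemma 3.10 from \cite{HPTV15} in the exact same presentation'' immediately preceding the statement). Your physical-space derivation --- collapsing the $(\eta,\kappa)$-integral to the kernel identity
\[
(\mathcal{I}^s[f,g,h])^\wedge(\xi)=\frac{1}{8\pi^2 s}\iiint e^{i\xi(w-x-z)}e^{-i(w-x)(w-z)/(2s)}f(x)\overline{g(w)}h(z)\,dx\,dw\,dz,
\]
then bounding $|e^{-i\alpha}-1|\lesssim s^{-1/2}$ on the support and paying $s^{1/8}$ per factor to pass from $L^1$ to $L^2$ --- is exactly the argument given in \cite{HPTV15}, so you have independently reproduced the referenced proof. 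The integration-by-parts treatment of the $|\xi|^\theta$ weight is also the standard route; the only cosmetic point is that it is cleaner to apply $\xi^\theta=i^\theta\partial_x^\theta$ directly to the remainder $R(\xi)$ (which already carries the factor $e^{-i\alpha}-1$) rather than to the full expression and then re-split, but the outcome is the same.
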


The following lemma will act as the analogue of the Strichartz estimate lemma, Lemma 7.1 from \cite{HPTV15}:

\begin{lemma}[Weaker version]\label{lem 7.1}
Consider the function, $R^t_M: [h^{d/2+}(\mathbb{Z}^d)]^3 \to \ell^2(\mathbb{Z}^d)$, defined by 
    \begin{align}
        R^t_M[a, b, c](p) := \sum_{\abs{\omega} < \frac{1}{M}} e^{it \omega}\sum_{(p,q,r,s) \in \Gamma_{\omega}} a(q)\overline{b}(r) c(s).
    \end{align}
If $a^{(i)} \in h^{d/2+}$ for $i =1, 2, 3$,
then
\begin{align}
\norm{R^t_M  [a^{(1)} , a^{(2)} , a^{(3)}]}_{\ell^2} \lesssim_{d} \min_{\sigma \in S_3} \norm{a^{\sigma(1)}}_{\ell^2} \norm{a^{\sigma(2)}}_{h^{\frac{d}{2}+}} \norm{a^{\sigma(3)}}_{h^{\frac{d}{2}+}} ,\\
\norm{R^t_M  [a^{(1)} , a^{(2)} , a^{(3)}]}_{h^N} \lesssim_{d,N} \max_{\sigma \in S_3} \norm{a^{\sigma(1)}}_{h^N} \norm{a^{\sigma(2)}}_{h^{\frac{d}{2}+}} \norm{a^{\sigma(3)}}_{h^{\frac{d}{2}+}} .
\end{align}
Uniformly in $M\geq 0.$
\end{lemma}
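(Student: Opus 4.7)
The plan is to exploit the fact that both bounds are claimed to hold uniformly in $M\geq 0$, so we may take absolute values throughout and discard all oscillatory and resonance‐restriction structure. Specifically, I would start by observing the pointwise estimate
\begin{align*}
|R^t_M[a,b,c](p)| \;\leq\; \sum_{(p,q,r,s)\in\mathcal{M}} |a(q)||b(r)||c(s)| \;=\; \sum_{q-r+s=p} |a(q)||b(r)||c(s)|,
\end{align*}
which identifies the right-hand side as a standard cubic convolution on $\mathbb{Z}^d$. At this stage the quasi-resonance cutoff $|\omega|<1/M$ and the oscillation $e^{it\omega}$ are entirely dropped, which is what makes this a ``weak'' version of the Strichartz estimate alluded to in \cite{HPTV15}.

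For the first inequality I would invoke Young's convolution inequality in the form $\ell^2 \ast \ell^1 \ast \ell^1 \hookrightarrow \ell^2$, yielding
\begin{align*}
\|R^t_M[a^{(1)},a^{(2)},a^{(3)}]\|_{\ell^2} \;\leq\; \min_{\sigma\in S_3}\|a^{\sigma(1)}\|_{\ell^2}\, \|a^{\sigma(2)}\|_{\ell^1}\, \|a^{\sigma(3)}\|_{\ell^1},
\end{align*}
and then upgrade the two $\ell^1$ factors via Cauchy--Schwarz: for any $\varepsilon>0$,
\begin{align*}
\|b\|_{\ell^1} \;\leq\; \Bigl(\sum_{p \in \mathbb{Z}^d}(1+|p|^2)^{-(d/2+\varepsilon)}\Bigr)^{1/2} \|b\|_{h^{d/2+\varepsilon/2}},
\end{align*}
where the weighted sum converges precisely because the exponent exceeds $d/2$. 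This is the embedding $h^{d/2+}(\mathbb{Z}^d) \hookrightarrow \ell^1(\mathbb{Z}^d)$, which underlies the algebra property invoked when motivating the definition of the $Z_d$-norm.

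For the $h^N$ bound I would redistribute the Sobolev weight coordinate-by-coordinate using the elementary inequality
\begin{align*}
(1+|p_i|^2)^{N/2} \;\lesssim_N\; (1+|q_i|^2)^{N/2} + (1+|r_i|^2)^{N/2} + (1+|s_i|^2)^{N/2},
\end{align*}
valid whenever $p_i = q_i - r_i + s_i$. Inserting this inside each term of the sum defining $\|\cdot\|_{h^N}$ expresses the $h^N$-norm of $R^t_M[a^{(1)},a^{(2)},a^{(3)}]$ as a sum of three operators, in each of which exactly one of the three inputs carries an additional weight $(1+|\cdot_i|^2)^{N/2}$. Each such operator is bounded by the first inequality of the lemma, with the weighted input playing the role of the $\ell^2$ factor; summing over $i\in\{1,\dots,d\}$ and taking the maximum of the three resulting estimates gives the desired bound, with the $\max_\sigma$ arising because the weighted factor may be any one of the three inputs.

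I do not foresee a genuine obstacle: the argument is essentially Young's inequality plus the Sobolev embedding $h^{d/2+}\hookrightarrow \ell^1$, together with a triangle-style weight distribution. The only conceptual point worth emphasizing is that the uniformity in $M\geq 0$ is what permits throwing away the phase $e^{it\omega}$ and the restriction $|\omega|<1/M$; a sharper bound exploiting the quasi-resonance structure would be a genuinely different estimate.
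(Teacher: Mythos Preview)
Your proposal is correct and matches the paper's own approach: the remark immediately following the lemma states that it ``follows from basic Young's inequality and Sobolev embedding,'' and the Young's inequality you invoke is precisely the estimate \eqref{eq Young} recorded in the preliminaries. Your coordinate-wise redistribution of the $h^N$ weight is also consistent with the paper's definition of the $h^k$-norm.
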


\begin{remark}[The significance of Lemma \ref{lem 7.1}]
This follows from basic Young's inequality and Sobolev embedding.

We note that this estimate is worse than the analogous estimate of \cite{HPTV15} derived from sophisticated Strichartz  estimates. It is well-known that when the torus is re-scaled by an irrational vector, Strichartz estimates are not as easily attainable. Even though the sharp Strichartz estimates hold in the case that we are considering (see Bourgain and Demeter \cite{BD15},  Kilip and Visan \cite{KV16} and Herr, Tataru and Tzvetkov \cite{HTT14}), the argument used in \cite{HPTV15} is not easily reconstituted in this setting since frequency localized solutions to the linear Schr\"odinger equation are not necessarily time-periodic. One can achieve less derivative loss on the right-hand sides of the inequalities that appear in Lemma \ref{lem 7.1}, but we have no use in our argument for the best possible bound. In \citep{HPTV15}, they used the better version of Lemma \ref{lem 7.1} in two ways:
\begin{enumerate}
\item
It is used to bound $\Pi_M^t$;
\item
Most importantly, the $h^1$ norm can be shown to be a constant of motion with respect to the resonant equation. This means that an $h^1$ upper bound is particularly important to the description of asymptotic behavior of solutions. We do not need this because for our effective equation, all $h^s$ norms are bounded (for special solutions). This further allows the dimension of are torus to be arbitrarily high.
\end{enumerate}
\end{remark}

\begin{lemma}\label{lem 7.3}
Assume that $N \geq 10d$. Then
\begin{align}
\sup_{x \in \R} \sum_{p \in \Z^d} (1 + \abs{p}^2)^{\frac{d}{2}+} \abs{e^{i t \partial_{xx}} F_p(x)}^2 \lesssim \inner{t}^{-1} \parenthese{\norm{F}_{Z_d}^2 + \inner{t}^{-\frac{1}{10}} (\norm{xF}_{L^2_{x, y}}^2 + \norm{F}_{H^N_{x, y}}^2) }
\end{align}
where the $Z_d$ norm is defined in Definition \ref{defn Z_d}.
\end{lemma}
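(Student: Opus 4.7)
The plan is a stationary-phase analysis of the one-dimensional Schr\"odinger propagator, applied fiberwise over the Fourier modes in $y$, combined with the weighted decay provided by the $Z_d$-norm. The short-time regime $\inner{t} \lesssim 1$ is handled directly by Sobolev embedding: since $N \geq 10d$ is comfortably larger than $\tfrac{d}{2}+1$, the inclusion $H^1_x \hookrightarrow L^\infty_x$ gives $|e^{it\partial_{xx}}F_p(x)|^2 \leq \|F_p\|_{H^1_x}^2$ pointwise in $p$, and summing against $(1+|p|^2)^{d/2+}$ is controlled by $\|F\|_{H^N_{x,y}}^2$, which is absorbed by the right-hand side. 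The substantive work is in the case $|t|\geq 1$.

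For $|t|\geq 1$ I would use the explicit Schr\"odinger kernel to write
\begin{align}
e^{it\partial_{xx}}F_p(x) = \sqrt{\tfrac{\pi}{it}}\,e^{ix^2/(4t)}\,\widehat{F}_p(x/(2t)) + E_p(t,x),
\end{align}
with
\begin{align}
E_p(t,x) := \frac{e^{ix^2/(4t)}}{(4\pi it)^{1/2}}\int_{\R} e^{-ixy/(2t)}\bigl(e^{iy^2/(4t)} - 1\bigr)F_p(y)\,dy.
\end{align}
The principal term is already in the correct shape for $Z_d$: summing its square against the weight $(1+|p|^2)^{d/2+}$ yields a multiple of $\tfrac{1}{|t|}\|\widehat F(x/(2t))\|_{h^{d/2+}_p}^2$, which by the definition of the $Z_d$-norm is bounded by $|t|^{-1}(1+|x/(2t)|^2)^{-2}\|F\|_{Z_d}^2 \leq |t|^{-1}\|F\|_{Z_d}^2$, uniformly in $x$.

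The technical heart is the quantitative bound on $E_p$. My plan is to split the $y$-integral at the threshold $|y| \sim |t|^{1/5}$. On the region $|y|\leq |t|^{1/5}$, the Taylor estimate $|e^{iy^2/(4t)}-1|\lesssim |t|^{-3/5}$ together with Cauchy--Schwarz yields a contribution of size $|t|^{-1}\|F_p\|_{L^2_x}$; on the region $|y|\geq |t|^{1/5}$, the elementary inequality $\int_{|y|\geq |t|^{1/5}}|F_p(y)|\,dy \lesssim |t|^{-1/10}\|xF_p\|_{L^2_x}$ (from Cauchy--Schwarz with weight $|y|^{-1}$) produces a contribution of size $|t|^{-1/2-1/10}\|xF_p\|_{L^2_x}$. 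Squaring and combining gives
\begin{align}
|E_p(t,x)|^2 \lesssim |t|^{-1-1/10}\bigl(\|F_p\|_{L^2_x}^2 + \|xF_p\|_{L^2_x}^2\bigr),
\end{align}
and summing against $(1+|p|^2)^{d/2+}$ using the embedding $h^{d/2+}_y \hookrightarrow H^N_y$ (for which $N\geq 10d$ is more than adequate) delivers the claimed error $|t|^{-1-1/10}(\|xF\|_{L^2_{x,y}}^2 + \|F\|_{H^N_{x,y}}^2)$. The main obstacle is the bookkeeping needed to choose the split-point exponent so that both the slow-phase and the tail errors gain a factor strictly better than $|t|^{-1}$; the value $1/5$ balances the two, and although any choice in $(0,2/5)$ yields some quantitative improvement, $1/5$ is precisely what produces the $|t|^{-1/10}$ appearing in the statement.
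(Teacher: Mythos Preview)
Your stationary-phase analysis of the one-dimensional propagator is fine, and the error bound
\[
|E_p(t,x)|^2 \lesssim |t|^{-1-1/10}\bigl(\|F_p\|_{L^2_x}^2 + \|xF_p\|_{L^2_x}^2\bigr)
\]
is correct. The gap is in the summation step. When you sum against $(1+|p|^2)^{d/2+}$, the term $\sum_p (1+|p|^2)^{d/2+}\|F_p\|_{L^2_x}^2$ is indeed controlled by $\|F\|_{H^N_{x,y}}^2$. But the other term gives
\[
\sum_{p\in\Z^d}(1+|p|^2)^{d/2+}\|xF_p\|_{L^2_x}^2 \;=\; \|xF\|_{L^2_x h^{d/2+}_p}^2,
\]
which is \emph{not} dominated by either $\|xF\|_{L^2_{x,y}}^2$ (no $p$-regularity) or $\|F\|_{H^N_{x,y}}^2$ (no $x$-weight). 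The $S$-norm does not contain any mixed quantity of the form $\|xF\|_{L^2_x H^s_y}$, so this sum is simply not available from the right-hand side of the lemma.

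The paper avoids this by splitting in the torus frequency $p$ rather than in the spatial variable $y$. For $|p|\le t^{1/(5d)}$ one applies the stationary-phase error $t^{-3/4}\|xF_p\|_{L^2_x}$, but now the weight satisfies $(1+|p|^2)^{d/2+}\lesssim t^{1/5+}$ uniformly on this range, so the summed error is bounded by $t^{-1/2}\cdot t^{1/5+}\|xF\|_{L^2_{x,y}}^2$ with the \emph{unweighted} norm. For $|p|\ge t^{1/(5d)}$ one uses $|e^{it\partial_{xx}}F_p(x)|\lesssim\|F_p\|_{H^1_x}$ and trades the excess $p$-weight for $t$-decay via $(1+|p|^2)^{-5d}\le t^{-2}$, landing on $t^{-1}\|F\|_{H^N_{x,y}}^2$; no $x$-weight is needed here. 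Your argument can be repaired by inserting exactly this high-low split in $p$ before summing the error term.
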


\begin{proof}[Proof of Lemma \ref{lem 7.3}]
Without loss of generality, suppose $t \geq 1$. 

We recall the following asymptotic approximation of $e^{it \partial_{xx}} f$ in \cite{HPTV15}:
\begin{align}\label{eq asy}
\abs{e^{it \partial_{xx}} f(x) - c \frac{e^{-i \frac{x^2}{4t}}}{\sqrt{t}} \widehat{f} (- \frac{x}{2t}) } \lesssim t^{- \frac{3}{4}} \norm{x f}_{L^2_x} .
\end{align}

Now using \eqref{eq asy} and triangle inequality, we write
\begin{align}
t \cdot \sum_{p \in \Z^d , \abs{p} \leq t^{\frac{1}{5d}}}  (1 + \abs{p}^2)^{\frac{d}{2}+} \abs{e^{it\partial_{xx}} F_p (x)}^2 
& \lesssim \sum_{p \in \Z^d} (1 + \abs{p}^2)^{\frac{d}{2}+} \abs{\widehat{F}_p (- \frac{x}{2t})}^2 + t^{- \frac{1}{2}} \sum_{p \in \Z^d , \abs{p} \leq t^{\frac{1}{5d}}} (1 + \abs{p}^2)^{\frac{d}{2}+} \norm{xF_p}_{L^2_x}^2 \\
& \lesssim \sum_{p \in \Z^d} (1 + \abs{p}^2)^{\frac{d}{2}+} \abs{\widehat{F}_p (- \frac{x}{2t})}^2 +  t^{- \frac{1}{2}} \sum_{p \in \Z^d , \abs{p} \leq t^{\frac{1}{5d}}}  t^{\frac{d+}{5d}}  \norm{xF_p}_{L^2_x}^2 \\
& \leq \sum_{p \in \Z^d} (1 + \abs{p}^2)^{\frac{d}{2}+} \abs{\widehat{F}_p (- \frac{x}{2t})}^2 + t^{- \frac{1}{2}} t^{\frac{1}{5}+}  \norm{xF}_{L_{x, y}^2}^2.
\end{align}
Observe that $\sup_{x, t} \abs{\widehat{F}_p (- \frac{x}{2t})}^2 \leq \sup_{\xi} \abs{\widehat{F}_p ( \xi)}^2$, thus $\sum_{p \in \Z^d} (1 + \abs{p}^2)^{\frac{d}{2}+} \abs{\widehat{F}_p (- \frac{x}{2t})}^2 \leq \norm{F}_{Z_d}^2$. This inequality dictates the form of the $Z_d$ norm. Furthermore
\begin{align}
t \sum_{\abs{p} \geq t^{\frac{1}{5d}}} (1 + \abs{p}^2)^{\frac{d}{2}+} \abs{e^{it\partial_{xx}} F_p (x)}^2 & \lesssim t^{1- \frac{10d}{5d}} \sum (1 + \abs{p}^2)^{5d+\frac{d}{2} + } \norm{F_p}_{H^1}^2  \leq t^{-1} \norm{F}_{H^N}^2 .
\end{align}
This final estimate establishes the strongest lower bound for any feasible choice of $N$ in terms of $d$.

\end{proof}

\begin{remark}\label{rmk.5/2}
    By reducing the decay of $\langle t \rangle^{-1/10}$ one can achieve the $N$ condition
        \begin{align}
            N>\tfrac{5}{2}d .
        \end{align}
    We found no use for this improved bound in this manuscript.
\end{remark}

\begin{proof}[Proof of Lemma \ref{lem 3.7}]
Using the definition of $\Pi_M^t$ in \eqref{eq Pi}, we have
\begin{align}
\norm{\Pi_M^t [F^1 , F^2 , F^3]}_{L_{x,y}^2} & = \norm{\sum_{\abs{\omega} \leq \frac{1}{M}}  \sum_{(p,q,r,s) \in \Gamma_{\omega}} e^{i \omega t} \mathcal{I}^t [F_q^{(1)} , F_r^{(2)}, F_s^{(3)}] (x)}_{\ell_p^2 L_x^2} \\
& \leq \norm{\sum_{\abs{\omega} \leq \frac{1}{M}}  \sum_{(p,q,r,s) \in \Gamma_{\omega}} e^{i\omega t} e^{-it \partial_{xx}} (e^{it \partial_{xx}} F_q^{(1)} (x)) (\overline{e^{it \partial_{xx}} F_r^{(2)} (x)}) (e^{it \partial_{xx}} F_s^{(3)} (x))  }_{\ell_p^2 L_x^2} . \label{eq 2}
\end{align}
Let 
\begin{align}
a_q^{(1)} (x) : = e^{it \partial_{xx}} F_q^{(1)} (x), \quad a_r^{(2)} (x) : = e^{it \partial_{xx}} F_r^{(2)} (x),  \quad a_s^{(3)} (x) : = e^{it \partial_{xx}} F_s^{(3)} (x) .
\end{align}
Then using Lemma \ref{lem 7.1}
\begin{align}
\eqref{eq 2} & = \norm{R_M (a^{(1)} , a^{(2)} , a^{(3)}) (x)}_{\ell_p^2 L_x^2} \\
& \leq \norm{\norm{a^{(1)} (x)}_{\ell_p^2}}_{L_x^2} \norm{ \norm{a^{(2)} (x)}_{h_p^{\frac{d}{2}+}}}_{L_x^{\infty}}  \norm{\norm{a^{(3)} (x)}_{h_p^{\frac{d}{2}+}} }_{L_x^{\infty}} .
\end{align}

Now we want to use Lemma \ref{lem 7.3} to bound $ \norm{\norm{a^{(j)} (x)}_{h_p^{\frac{d}{2}+}} }_{L_x^{\infty}}$, $j=2,3$. 
By Lemma \ref{lem 7.3},
\begin{align}
 \norm{\norm{a^{(j)} (x)}_{h_p^{\frac{d}{2}+}} }_{L_x^{\infty}}  \leq \inner{t}^{-\frac{1}{2}} \parenthese{\norm{F^{(j)}}_{Z_d} + \inner{t}^{-\frac{1}{20}} (\norm{xF^{(j)}}_{L_x^2} + \norm{F^{(j)}}_{H_x^N}) }  .
\end{align}
Then
\begin{align}
 \norm{\norm{a^{(j)} (x)}_{h_x^{\frac{d}{2}+}} }_{L_x^{\infty}} \lesssim \inner{t}^{- \frac{1}{2}} \norm{F^{(j)}}_{Z_{t,d}} ,
\end{align}
where the $Z_{t,d}$ norm is defined in Definition \ref{defn Z_td}.

Finally,
\begin{align}
\norm{\Pi_M^t [F^{(1)} , F^{(2)} , F^{(3)}]}_{L_{x,y}^2} \lesssim \inner{t}^{-1} \min_{\sigma \in S_3} \norm{F^{\sigma(1)}}_{L_{x,y}^2} \norm{F^{\sigma(2)}}_{Z_{t,d}} \norm{F^{\sigma(3)}}_{Z_{t,d}} .
\end{align}
This basically proves the first part of Lemma \ref{lem 3.7}. 

To prove the `in addition' estimates, decompose 
\begin{align}
F = F_c + F_f , \quad G = G_c + G_f , \quad H = H_c + H_f ,
\end{align}
where
\begin{align}
F_c (x,y) : = \phi (t^{- \frac{1}{4}} x) F(x,y) , \\
G_c (x,y) : = \phi (t^{- \frac{1}{4}} x) G(x,y) ,\\
H_c (x,y) : = \phi (t^{- \frac{1}{4}} x) H(x,y) .
\end{align}

\begin{claim}\label{claim diff}
The following estimates hold uniformly in $M\geq0$.
\begin{align}
\norm{\Pi_M^t [F,G,H] - \Pi_M^t [F_c, G_c, H_c]}_{Z_d} + \frac{1}{t} \norm{\mathcal{R}_M^t [F,G,H] - \mathcal{R}_M^t [F_c , G_c, H_c]}_{Z_d} \lesssim \inner{t}^{- \frac{21}{20}} \norm{F}_{S} \norm{G}_{S} \norm{H}_{S} .
\end{align}
and 
\begin{align}
\norm{\Pi_M^t [F,G,H] - \Pi_M^t [F_c, G_c, H_c]}_{S} + \frac{1}{t} \norm{\mathcal{R}_M^t [F,G,H] - \mathcal{R}_M^t [F_c , G_c, H_c]}_{S} \lesssim \inner{t}^{- \frac{21}{20}} \norm{F}_{S^+} \norm{G}_{S^+} \norm{H}_{S^+} .
\end{align}
\end{claim}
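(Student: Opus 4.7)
The strategy is to expand the differences by trilinearity,
\begin{align*}
\Pi_M^t[F,G,H]-\Pi_M^t[F_c,G_c,H_c]=\Pi_M^t[F_f,G,H]+\Pi_M^t[F_c,G_f,H]+\Pi_M^t[F_c,G_c,H_f],
\end{align*}
and analogously for $\mathcal{R}_M^t$, so that each summand contains at least one ``far'' factor. The core observation is that $F_f$ vanishes on $\{|x|\leq t^{1/4}\}$, so $1\leq(|x|/t^{1/4})^{9/10}$ on its support; together with the weight appearing in the $Y$-norm, the embedding $\ell^2\subset\ell^1$, and \eqref{defn Y},
\begin{align*}
\|F_f\|_{L^2_{x,y}}^2=\sum_p\|F_{f,p}\|_{L^2_x}^2\leq t^{-9/20}\sum_p\|\langle x\rangle^{9/10}F_p\|_{L^2_x}^2\lesssim t^{-9/20}\Bigl(\sum_p\|F_p\|_Y\Bigr)^2\lesssim t^{-9/20}\|F\|_S^2,
\end{align*}
that is, $\|F_f\|_{L^2_{x,y}}\lesssim t^{-9/40}\|F\|_S$. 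Since the cutoffs $\phi(t^{-1/4}\cdot)$ and $1-\phi(t^{-1/4}\cdot)$ are smooth on scale $t^{1/4}\geq 1$, the routine bounds $\|F_c\|_S,\|F_f\|_S\lesssim\|F\|_S$ hold, and analogous statements apply to $G,H$.

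For the $S$-bound I apply Lemma \ref{lem 3.7} to each of the three summands, using the symmetrization over $\sigma\in S_3$ to place the unique far factor in a $Z_{t,d}$ slot. The interpolation $\|F_f\|_{Z_d}\lesssim\|F_f\|_{L^2_{x,y}}^{1/4}\|F_f\|_S^{3/4}\lesssim t^{-9/160}\|F\|_S$ combined with Definition \ref{defn Z_td} gives $\|F_f\|_{Z_{t,d}}\leq\|F_f\|_{Z_d}+t^{-1/20}\|F_f\|_S\lesssim t^{-1/20}\|F\|_S$, while $\|G\|_{Z_{t,d}}\lesssim\|G\|_S$ by \eqref{eq 14}. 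Thus Lemma \ref{lem 3.7} yields
\begin{align*}
\|\Pi_M^t[F_f,G,H]\|_S\lesssim t^{-1}\|F_f\|_{Z_{t,d}}\|G\|_{Z_{t,d}}\|H\|_S\lesssim t^{-21/20}\|F\|_S\|G\|_S\|H\|_S\leq t^{-21/20}\|F\|_{S^+}\|G\|_{S^+}\|H\|_{S^+},
\end{align*}
and the two remaining pieces are handled identically by permutation.

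For the $Z_d$-bound no direct estimate is available from Lemma \ref{lem 3.7}, so I interpolate between an $L^2$-bound and the $S$-bound just derived. The intermediate calculation in the proof of Lemma \ref{lem 3.7} (before embedding into $S$) gives $\|\Pi_M^t[F^{(1)},F^{(2)},F^{(3)}]\|_{L^2_{x,y}}\lesssim t^{-1}\|F^{(1)}\|_{L^2_{x,y}}\|F^{(2)}\|_{Z_{t,d}}\|F^{(3)}\|_{Z_{t,d}}$; placing $F_f$ in the $L^2$ slot and using $\|F_f\|_{L^2_{x,y}}\lesssim t^{-9/40}\|F\|_S$ produces $\|\Pi_M^t[F_f,G,H]\|_{L^2_{x,y}}\lesssim t^{-49/40}\|F\|_S\|G\|_S\|H\|_S$. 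Coupling this with the $S$-bound $\|\Pi_M^t[F_f,G,H]\|_S\lesssim t^{-1}\|F\|_S\|G\|_S\|H\|_S$ through the interpolation $\|U\|_{Z_d}\lesssim\|U\|_{L^2_{x,y}}^{1/4}\|U\|_S^{3/4}$ from the remark after Definition \ref{defn Z_d} yields $\|\Pi_M^t[F_f,G,H]\|_{Z_d}\lesssim t^{-169/160}\|F\|_S\|G\|_S\|H\|_S\leq t^{-21/20}\|F\|_S\|G\|_S\|H\|_S$, since $169/160>168/160=21/20$. The $\mathcal{R}_M^t$ estimates follow the same template but are simpler because $\mathcal{R}_M^t$ acts pointwise in $\xi$: a direct application of Lemma \ref{lem 7.1} in $p$ at each fixed $\xi$ gives $\|\mathcal{R}_M^t[F,G,H]\|_{L^2_{x,y}}\lesssim\|F\|_{L^2_{x,y}}\|G\|_{Z_d}\|H\|_{Z_d}$, and the same decomposition-plus-interpolation absorbs the $\tfrac{1}{t}$ prefactor.

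The main obstacle is the tightness of the numerology: the $9/40$ spatial-decay gain is reduced to $9/160$ after the quartic interpolation for $Z_d$, which only just exceeds the $1/20=8/160$ loss built into the $Z_{t,d}$-norm. The $9/10$ weight in the $Y$-norm is essentially sharp for this argument, and verifying that the $L^2$-based trilinear estimate in the proof of Lemma \ref{lem 3.7} is robust under the substitution of one of the arguments by its spatial cutoff (without losing the $t^{-1}$ prefactor) is the delicate step.
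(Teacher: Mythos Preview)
Your argument for the first ($Z_d$) estimate is correct and matches the paper's approach: interpolate $\|U\|_{Z_d}\lesssim\|U\|_{L^2}^{1/4}\|U\|_S^{3/4}$ between the $L^2$-bound (where $\|F_f\|_{L^2}$ gains a negative power of $t$) and the straightforward $S$-bound $\lesssim t^{-1}\|F\|_S\|G\|_S\|H\|_S$ from Lemma~\ref{lem 3.7}. The paper uses the full weight $\|F_f\|_{L^2}\lesssim t^{-1/4}\|xF\|_{L^2}$ instead of your $t^{-9/40}$ via the $\langle x\rangle^{9/10}$ weight, but either gain suffices after the quartic interpolation.

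There is, however, a genuine gap in your argument for the second ($S$) estimate. Lemma~\ref{lem 3.7} gives
\[
\|\Pi_M^t[F_f,G,H]\|_S\lesssim t^{-1}\sum_{\sigma\in S_3}\|F^{\sigma(a)}\|_{Z_{t,d}}\|F^{\sigma(b)}\|_{Z_{t,d}}\|F^{\sigma(c)}\|_S,
\]
a \emph{sum} over permutations, not a minimum. You cannot simply ``place the unique far factor in a $Z_{t,d}$ slot'': one of the six terms is $\|G\|_{Z_{t,d}}\|H\|_{Z_{t,d}}\|F_f\|_S$, and with only $S$-norms on the right you have $\|F_f\|_S\lesssim\|F\|_S$, $\|G\|_{Z_{t,d}}\lesssim\|G\|_S$, yielding no decay beyond $t^{-1}$. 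Your intermediate claim $\|\Pi_M^t[F_f,G,H]\|_S\lesssim t^{-21/20}\|F\|_S\|G\|_S\|H\|_S$ is therefore unjustified (and indeed you tacitly abandon it when you invoke only $t^{-1}$ in the $Z_d$ interpolation step).

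The $S^+$-norm is not a cosmetic upgrade at the end; it is what rescues this bad permutation. Since $F_f$ is supported on $\{|x|\gtrsim t^{1/4}\}$, one has, for $|\alpha|\le N$,
\[
\|\partial^\alpha F_f\|_{L^2_{x,y}}\lesssim t^{-1/4}\|x\partial^\alpha F\|_{L^2_{x,y}}\lesssim t^{-1/4}\bigl(\|xF\|_{H^N}+\|F\|_{H^N}\bigr)\lesssim t^{-1/4}\|F\|_{S^+},
\]
and similarly $\|xF_f\|_{L^2}\lesssim t^{-1/4}\|x^2F\|_{L^2}\lesssim t^{-1/4}\|F\|_{S^+}$; hence $\|F_f\|_S\lesssim t^{-1/4}\|F\|_{S^+}$. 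This is the mechanism the paper uses (stated without detail) to obtain the second inequality, and it is precisely what your argument is missing.
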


\begin{proof}[Proof of Claim \ref{claim diff}]
Let $\widetilde{G} = G_c $ or $G_f$, and $\widetilde{H} = H_c $ or $H_f$. Recall that 
\begin{align}
    \norm{\frac{\pi}{t} \mathcal{R}_M^t [F_f , \widetilde{G}, \widetilde{H}]}_{Z_d} \leq \norm{\frac{\pi}{t} \mathcal{R}_M^t [F_f , \widetilde{G}, \widetilde{H}]}_{L^2_{x, y}}^{\frac{1}{4}}\norm{\frac{\pi}{t} \mathcal{R}_M^t [F_f , \widetilde{G}, \widetilde{H}]}_{S}^{\frac{3}{4}}
\end{align}
and similarly 
\begin{align}
\norm{\Pi_M^t }_{Z_d} \lesssim \norm{\Pi_M^t }_{L^2_{x, y}}^{\frac{1}{4}}\norm{\Pi_M^t }_{S}^{\frac{3}{4}} .
\end{align}
By a basic trilinear estimate  
\begin{align}
\norm{\frac{\pi}{t} \mathcal{R}_M^t [F_f , \widetilde{G}, \widetilde{H}]}_{S} \lesssim \inner{t}^{-1} \norm{F}_{S} \norm{G}_{S} \norm{H}_{S}.
\end{align}
By the first estimate in Lemma \ref{lem 3.7}
\begin{align}
\norm{\Pi_M^t [F_f , \widetilde{G}, \widetilde{H}]}_{S} \lesssim \inner{t}^{-1} \norm{F}_{S} \norm{G}_{S} \norm{H}_{S} .
\end{align}
Similarly,
\begin{align}
\norm{\frac{\pi}{t} \mathcal{R}_M^t [F_f , \widetilde{G}, \widetilde{H}] }_{L^2} + \norm{\Pi_M^t [F_f , \widetilde{G}, \widetilde{H}]}_{L^2} & \lesssim \inner{t}^{-1} \norm{F_f}_{L^2_{x, y}} \norm{\widetilde{G}}_{S} \norm{\widetilde{H}}_{S} .
\end{align}
By Bernstein's inequality on the $F_f$ term,
\begin{align}
    \inner{t}^{-1} \norm{F_f}_{L^2_{x, y}} \norm{\widetilde{G}}_{S} \norm{\widetilde{H}}_{S}\lesssim \inner{t}^{-\frac{5}{4}} \norm{F}_{S} \norm{G}_{S} \norm{H}_{S} .
\end{align}
Thus 
    \begin{align}
        \norm{\frac{\pi}{t} \mathcal{R}_M^t [F_f , \widetilde{G}, \widetilde{H}] }_{L^2} + \norm{\Pi_M^t [F_f , \widetilde{G}, \widetilde{H}]}_{L^2} \lesssim \inner{t}^{-\frac{5}{4}} \norm{F}_{S} \norm{G}_{S} \norm{H}_{S}
    \end{align}
    and
    \begin{align}
        \norm{\frac{\pi}{t} \mathcal{R}_M^t [F_f , \widetilde{G}, \widetilde{H}] }_{S} + \norm{\Pi_M^t [F_f , \widetilde{G}, \widetilde{H}]}_{S} \lesssim \inner{t}^{-\frac{5}{4}} \norm{F}_{S^+} \norm{G}_{S^+} \norm{H}_{S^+}
    \end{align}
\end{proof}

Claim \ref{claim diff} implies that is suffices to show
\begin{align}
\norm{\Pi_M^t [F_c, G_c, H_c] - \frac{\pi}{t} \mathcal{R}_M^t [F_c , G_c, H_c] }_{Z_d} \lesssim \inner{t}^{- \frac{15}{14}} \norm{F}_{S} \norm{G}_{S} \norm{H}_{S} ,
\end{align}
and
\begin{align}
\norm{\Pi_M^t [F_c, G_c, H_c] - \frac{\pi}{t} \mathcal{R}_M^t [F_c , G_c, H_c] }_{S} \lesssim \inner{t}^{- \frac{15}{14}} \norm{F}_{S^+} \norm{G}_{S^+} \norm{H}_{S^+} .
\end{align}
This follows from Lemma \ref{lem 3.10}. Note that although the gauge transformation terms appear in $\Pi^t_M$ and $\mathcal{R}^t_M$ in the irrational torus case. However, term-by-term, the nonlinearities have the same unit modulus coefficients appearing on the same monomials.

The rest of the proof follows exactly as it does in \cite{HPTV15} as the proof of Lemma 3.7.
\end{proof}

\begin{proof}[Proof of Proposition \ref{prop 3.1}]
The proof follows exactly as in \cite{HPTV15} with the estimates given in this section.
\end{proof}

\section{Proof of Proposition \ref{nocascprop}} \label{app.nocasc}

We include the proof of Proposition \ref{nocascprop} in this section.
\begin{proof}

First, by Corollary \ref{nogrowth}, $v_{\xi, p}=0$, for $p \in \mathbb{Z}^d\setminus B_{K/3}$. For $p \in B_{K/3}$,

	\begin{align*} 
	 	\tfrac{d}{dt}v_{\xi, p}
	 	&= i(|\xi|^2+\lambda_{ p}) v_{\xi, p} +i\sum_{(p, q, r, s) \in \Gamma_0^K} v_{\xi, q}v_{\xi, s}\overline{v}_{\xi, r} +i\sum_{|\omega|< \frac{1}{M}}\sum_{ (p, q, r, s) \in \Gamma_{\omega} \atop \max(\|p\|, \|q\|, \|r\|, \|s\|)>K} v_{\xi, q}v_{\xi, s}\overline{v}_{\xi, r} \\
	 	&=i(|\xi|^2+\lambda_{ p}) v_k +i\sum_{(p, q, r, s) \in \Gamma_0^{K/3}} v_{\xi, q}v_{\xi, s}\overline{v}_{\xi, r} .
	 \end{align*}
Using $v_{\xi, p}=0$  for $\|p\|>K/3$, we can simplify the following expression
	\begin{align*}
		\tfrac{d}{dt} |v(t)|_{k}^2 &= \tfrac{d}{dt} \int_{\xi \in \mathbb{R}}\sum_{p \in \mathbb{Z}^d} \left[\sum_{i=1}^d (1+|p_i|^2)^k + (1+|\xi|^2)^k \right]|v_{\xi, p}|^2\\
		&=\tfrac{d}{dt}\int_{\xi \in \mathbb{R}}\sum_{p \in B_{K/3}} \left[\sum_{i=1}^d (1+|p_i|^2)^k + (1+|\xi|^2)^k \right]|v_{\xi, p}|^2. 
	\end{align*}
	In order to simplify the following computations, let $ [ p]_k:= \sum_{i=1}^d (1+|p_i|^2)^k $. We first split into two terms. The argument for the first term will imply our desired estimate for the second term
	\begin{align*}
		\int_{\xi \in \mathbb{R}}\tfrac{d}{dt}& \sum_{p \in B_{K/3}}\left[[ p]_k + (1+|\xi|^2)^k \right]|v_{\xi, p}|^2\\
		&=\int_{\xi \in \mathbb{R}}\tfrac{d}{dt} \sum_{p \in B_{K/3}} [ p]_k|v_{\xi, p}|^2+\int_{\xi \in \mathbb{R}}(1+|\xi|^2)^k\tfrac{d}{dt}\sum_{p \in B_{K/3}} |v_{\xi, p}|^2\\
		&=I+II .
	\end{align*}
	For term I, we would like to show the following claim: 
	\begin{claim}\label{claim constant} 
		\begin{align*}
		\tfrac{d}{dt} \sum_{p \in B_{K/3}} [ p]_k|v_{\xi, p}|^2=0
	\end{align*}
	for any $k\geq 0$.
	\end{claim}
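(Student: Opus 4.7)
The plan is to expand $\tfrac{d}{dt}|v_{\xi,p}|^2$ using the equation of motion for $v_{\xi,p}$, show the linear part drops out, and then exhibit the remaining trilinear form as a purely real quantity by exploiting the combinatorial structure of the resonance set $\Gamma_0$ (Lemma \ref{char}) together with two natural involutions of $\Gamma_0^{K/3}$. The underlying point is that in the irrational setting, $\Gamma_0$ is so rigid that every symmetric function of the pairs $\{p_i,r_i\}$ versus $\{q_i,s_i\}$ is automatically conserved, which is far stronger than ordinary mass/energy conservation.

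Step 1: Restrict the equation. By Corollary \ref{nogrowth}, $v_{\xi,q}(t)=0$ whenever $q \notin B_{K/3}$ for all $t \in [0,T]$. Applied to \eqref{gaugeNLS}, the quasi-resonant sum with $\max(\|p\|,\|q\|,\|r\|,\|s\|)>K$ necessarily involves an index outside $B_{K/3}$ and therefore vanishes, while in the $\Gamma_0^K$ sum only the quartets in $\Gamma_0^{K/3}$ survive. Hence for $p \in B_{K/3}$,
\begin{align}
i\tfrac{d}{dt}v_{\xi,p} = (|\xi|^2+\lambda_p)v_{\xi,p} + \sum_{(p,q,r,s)\in \Gamma_0^{K/3}} v_{\xi,q} v_{\xi,s}\overline{v}_{\xi,r}.
\end{align}

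Step 2: Differentiate and kill the linear part. A standard computation gives
\begin{align}
\tfrac{d}{dt}\sum_{p \in B_{K/3}}[p]_k |v_{\xi,p}|^2 = -2\,\mathrm{Im}\,T, \qquad T := \sum_{(p,q,r,s)\in \Gamma_0^{K/3}} [p]_k\,\overline{v}_{\xi,p} v_{\xi,q} v_{\xi,s}\overline{v}_{\xi,r},
\end{align}
since the linear term $(|\xi|^2+\lambda_p)|v_{\xi,p}|^2$ is real and contributes $0$ to $2\,\mathrm{Re}(\overline{v}_{\xi,p}\,\tfrac{d}{dt}v_{\xi,p})$.

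Step 3: Use the structure of $\Gamma_0$. By Lemma \ref{char}, for each $(p,q,r,s)\in \Gamma_0$ and each coordinate $i$ one has $\{p_i,r_i\}=\{q_i,s_i\}$ as multisets; consequently $(1+|p_i|^2)^k+(1+|r_i|^2)^k = (1+|q_i|^2)^k+(1+|s_i|^2)^k$ for every $k\geq 0$, and summing over $i$ gives $[p]_k+[r]_k=[q]_k+[s]_k$ on $\Gamma_0^{K/3}$. Moreover, $\Gamma_0^{K/3}$ is invariant under each of the involutions $(p,q,r,s)\mapsto(r,s,p,q)$, $(p,q,r,s)\mapsto(p,s,r,q)$, and $(p,q,r,s)\mapsto(q,p,s,r)$. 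Applying the first to $T$ and using the identity for $[p]_k+[r]_k$, then applying the second to symmetrize between $q$ and $s$, one reduces to
\begin{align}
T = \sum_{(p,q,r,s)\in \Gamma_0^{K/3}} [q]_k\,\overline{v}_{\xi,p} v_{\xi,q} v_{\xi,s}\overline{v}_{\xi,r}.
\end{align}

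Step 4: Conclude $T$ is real. Taking the complex conjugate and then relabeling by the third involution $(p,q,r,s)\mapsto(q,p,s,r)$ yields
\begin{align}
\overline{T} = \sum_{(p,q,r,s)\in \Gamma_0^{K/3}} [q]_k\, v_{\xi,p}\overline{v}_{\xi,q}\overline{v}_{\xi,s}v_{\xi,r} = \sum_{(p,q,r,s)\in \Gamma_0^{K/3}} [p]_k\,\overline{v}_{\xi,p} v_{\xi,q} v_{\xi,s}\overline{v}_{\xi,r} = T,
\end{align}
where in the last step we used the original expression for $T$ before the rewrite in Step 3. Thus $\mathrm{Im}\,T=0$, which gives the claim. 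The only real obstacle is the bookkeeping in Steps 3--4: one must make sure the three involutions commute with the sum in the right way and that the complex conjugation is tracked correctly; once the identity $[p]_k+[r]_k=[q]_k+[s]_k$ and the pair-swap symmetry are in hand, the rest is forced. The bound on term $II$ then follows immediately by pulling the $\xi$-dependent weight $(1+|\xi|^2)^k$ out and invoking the $k=0$ case of the same argument, which reduces to ordinary $\ell^2$-mass conservation in the $p$-variable for each fixed $\xi$.
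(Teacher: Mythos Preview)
Your argument is correct and takes a genuinely different route from the paper. The paper proceeds by decomposing $\Gamma_0^{K/3}$ coordinate-wise: for each subset $E\subset\{1,\dots,d\}$ it isolates those quartets with $p_j=q_j,\,r_j=s_j$ for $j\in E$ and $p_j=s_j,\,r_j=q_j$ for $j\notin E$, and then shows directly that the corresponding sum factors as $z\overline{z}$ and is therefore real. Your approach instead extracts from Lemma~\ref{char} the single scalar identity $[p]_k+[r]_k=[q]_k+[s]_k$ on $\Gamma_0^{K/3}$ (a consequence of $\{p_i,r_i\}=\{q_i,s_i\}$ for every $i$) and combines it with the three natural involutions of $\Gamma_0^{K/3}$ to transfer the weight from $[p]_k$ to $[q]_k$ and then match $T$ with $\overline{T}$. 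This is shorter and avoids the bookkeeping of the $E$-decomposition (in particular the overcounting that occurs when $p_j=q_j=r_j=s_j$ for some $j$). The paper's decomposition, on the other hand, yields slightly more structural information---each $E$-block is not merely real but visibly nonnegative---though that extra information is not used. A minor remark: the sign in your Step~2 should be $+2\,\mathrm{Im}\,T$ rather than $-2\,\mathrm{Im}\,T$ (from $i\partial_t v=\dots$ one gets $\partial_t|v|^2=2\,\mathrm{Im}(\overline{v}\cdot\text{nonlin})$), but this is immaterial since you show $\mathrm{Im}\,T=0$.
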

	This would imply that both $I$ and $II$ equal zero. To that end, we expand the derivative of the product $|v_{\xi, p}|^2$ and insert the vector field:
	\begin{align*}
		\int_{\xi \in \mathbb{R}}\tfrac{d}{dt} &\sum_{p \in B_{K/3}} [ p]_k|v_{\xi, p}|^2\\
		&= \int_{\xi \in \mathbb{R}} \sum_{p \in B_{K/3}} [ p]_k(\tfrac{d}{dt} v_{\xi, p}\overline{v}_{\xi, p} + v_{\xi, p} \tfrac{d}{dt}\overline{y}_{\xi, p})\\
		&=\int_{\xi \in \mathbb{R}} \sum_{p \in B_{K/3}}[ p]_k\left(i(|\xi|^2+\lambda_p) v_{\xi, p} +i \sum_{(p, q, r, s) \in \Gamma_0^{K/3}}   v_{\xi, q}v_{\xi, s}\overline{v}_{\xi, r}\right) \overline{v}_{\xi, p} \\
		&\hspace{2cm}+ [ p]_k v_{\xi, p}\left( \overline{i(|\xi|^2+\lambda_{p}) v_{\xi, p}+i \sum_{(p, q, r, s) \in \Gamma_0^{K/3}}   v_{\xi, q}v_{\xi, s}\overline{v}_{\xi, r}\overline{v}_{\xi, p}} \right) .
	\end{align*}
 Simplifying, we obtain
	\begin{align*}
	&\sum_{p \in B_{K/3}} [ p]_k\left(i(|\xi|^2+\lambda_{ p})v_{\xi, p} +i \sum_{(p, q, r, s) \in \Gamma^{K/3}_0}   v_{\xi, q}v_{\xi, s}\overline{v}_{\xi, r}\right) \overline{v}_{\xi, p}\\
	&\hspace{4cm}+ [ p]_k v_{\xi, p}\left( \overline{i(|\xi|^2+\lambda_{p}) v_{\xi, p}+i \sum_{(p, q, r, s) \in \Gamma_0^{K/3}}   v_{\xi, q}v_{\xi, s}\overline{v}_{\xi, r}\overline{v}_{\xi, p} }\right) \\
		&=\sum_{p \in B_{K/3}}i[ p]_k((|\xi|^2+\lambda_{p})^2|v_{\xi, p}|^2-(|\xi|^2+\lambda_{p})^2|v_{\xi, p}|^2) \\
		&\hspace{2cm}+\sum_{p \in B_{K/3}}   i[ p]_k\sum_{(p, q, r, s) \in \Gamma_0^{K/3}}   v_{\xi, q}v_{\xi, s}\overline{v}_{\xi, r}\overline{v}_{\xi, p}\\
			&\hspace{4cm} +\sum_{p \in B_{K/3}}   -i[ p]_k\sum_{(p, q, r, s) \in \Gamma_0^{K/3}}   \overline{v}_{\xi, q}\overline{v}_{\xi, s}v_{\xi, r}v_{\xi, p} .
	\end{align*}
	Cancelling the real parts of this sum of complex numbers leads to the following identity
	\begin{align*}
	\int_{\xi \in \mathbb{R}}&\tfrac{d}{dt} \sum_{p \in B_{K/3}} [ p]_k|v_{\xi, p}|^2=2\im\left(\sum_{(p, q, r, s) \in \Gamma_0^{K/3}}   [ p]_k v_{\xi, q}v_{\xi, s}\overline{v}_{\xi, r}\overline{v}_{\xi, p} \right) .
	\end{align*}

Now we use the resonance condition to split the sum into $d$ one-dimensional sums. First, we use the linearity of $[ p]_k$ to split the sum into $d$ separate components
	\begin{align*}
		&2\im\left(\sum_{(p, q, r, s) \in \Gamma_0^{K/3}}   [ p]_k v_{\xi, q}v_{\xi, s}\overline{v}_{\xi, r}\overline{v}_{\xi, p} \right)=\sum_{i=1}^d 2\im\left(\sum_{(p, q, r, s) \in \Gamma_0^{K/3}}   (1+|p_i|^2)^k v_{\xi, q}v_{\xi, s}\overline{v}_{\xi, r}\overline{v}_{\xi, p} \right) .
	\end{align*}
	Consider the $i$th term:
	\begin{align*}
		2\im\left(\sum_{(p, q, r, s) \in \Gamma_0^{K/3}}   (1+|p_i|^2)^k v_{\xi, q}v_{\xi, s}\overline{v}_{\xi, r}\overline{v}_{\xi, p} \right) .
	\end{align*}
	By Lemma \ref{char}, for each $j \in \{1, ..., d\}$, we know that ($p_j=q_j$ and $r_j=s_j$), or ($p_j=s_j$ and $r_j=q_j$). For $E\subset \{ 1,..., d\}$, let $\Gamma_0^{K/3}(E)$ be the subset of $\Gamma_0^{K/3}$ such that 
	    \begin{align}
	        \begin{array}{ll}
	            p_j=q_j, r_j=s_j & j \in E ,\\
	            p_j=s_j, r_j=q_j & j \not\in E .
	        \end{array}
	    \end{align}
	  Then
	  \begin{align}
	      2\im&\left(\sum_{(p, q, r, s) \in \Gamma_0^{K/3}}   (1+|p_i|^2)^k v_{\xi, q}v_{\xi, s}\overline{v}_{\xi, r}\overline{v}_{\xi, p} \right)=\sum_{E \subset \{1, ..., d\}}2\im\left(\sum_{(p, q, r, s) \in \Gamma_0^{K/3}(E)}   (1+|p_i|^2)^k v_{\xi, q}v_{\xi, s}\overline{v}_{\xi, r}\overline{v}_{\xi, p} \right).
	  \end{align}
	  Furthermore, if we let $p_E \in \mathbb{Z}^d$ be defined by $(p_E)_j = p_j$ for $j \in E$ and $(p_E)_j= 0$ for $j \not\in E$, then
	\begin{align}
	    2\im&\left(\sum_{(p, q, r, s) \in \Gamma_0^{K/3}(E)}   (1+|p_i|^2)^k v_{\xi, q}v_{\xi, s}\overline{v}_{\xi, r}\overline{v}_{\xi, p} \right)=2\im\left(\sum_{|p_j|, |r_j|\leq K/3 }   (1+|p_i|^2)^k v_{\xi, p_E+r_{E^c}}v_{\xi, p_{E^c}+r_E}\overline{v}_{\xi, r}\overline{v}_{\xi, p} \right) .
	\end{align}
	Without loss of generality, let $i \in E$ (otherwise, reverse the definition of $E$), another decomposition yields
	\begin{align}
	2\im&\left(\sum_{|p_j|, |r_j|\leq K/3 }   (1+|p_i|^2)^k v_{\xi, p_E+r_{E^c}}v_{\xi, p_{E^c}+r_E}\overline{v}_{\xi, r}\overline{v}_{\xi, p} \right)\\
	&=2\im\left(\sum_{|p_j|, |r_j|\leq K/3 \atop j \in E } (1+|p_i|^2)^k\sum_{|p_j|, |r_j|\leq K/3 \atop j \not\in E }   v_{\xi, p_E+r_{E^c}}v_{\xi, p_{E^c}+r_E}\overline{v}_{\xi, r}\overline{v}_{\xi, p} \right)\\
	&=2\im\left(\sum_{|p_j|, |r_j|\leq K/3 \atop j \in E } (1+|p_i|^2)^k\left(\sum_{|r_j|\leq K/3 \atop j \not\in E }   v_{\xi, p_E+r_{E^c}}\overline{v}_{\xi, r}\right) \left( \sum_{|p_j|\leq K/3 \atop j \not\in E }  v_{\xi, p_{E^c}+r_E}\overline{v}_{\xi, p}\right) \right)\\
	&=2\im\left(\sum_{|p_j|, |r_j|\leq K/3 \atop j \in E } (1+|p_i|^2)^k\left(\sum_{|t_j|\leq K/3 \atop j \not\in E }   v_{\xi, p_E+t_{E^c}}\overline{v}_{\xi, r_E+ t_{E^c}}\right) \left( \sum_{|t_j|\leq K/3 \atop j \not\in E }  v_{\xi, r_E+t_{E^c}}\overline{v}_{\xi,p_E+t_{E^c}}\right) \right).
	\end{align} 
Finally, 
	\begin{align}
	&\left(\sum_{|t_j|\leq K/3 \atop j \not\in E }   v_{\xi, p_E+t_{E^c}}\overline{v}_{\xi, r_E+ t_{E^c}}\right) \left( \sum_{|t_j|\leq K/3 \atop j \not\in E }  v_{\xi, r_E+t_{E^c}}\overline{v}_{\xi,p_E+t_{E^c}}\right)= \left(\sum_{|t_j|\leq K/3 \atop j \not\in E }   v_{\xi, p_E+t_{E^c}}\overline{v}_{\xi, r_E+ t_{E^c}}\right) \overline{\left( \sum_{|t_j|\leq K/3 \atop j \not\in E }  \overline{v}_{\xi, r_E+t_{E^c}}v_{\xi,p_E+t_{E^c}}\right)}.
	\end{align}

Therefore, the expression $\left(\sum_{(p, q, r, s) \in \Gamma_0^{K/3}}   (1+|p_i|^2)^k v_{\xi, q}v_{\xi, s}\overline{v}_{\xi, r}\overline{v}_{\xi, p} \right)$ is real-valued, and we are finished.
\end{proof}

\bibliography{waveguide}
\bibliographystyle{plain}
\end{document}